\newtheorem{theorem}{Theorem}
\newtheorem{lemma}[theorem]{Lemma}
\newtheorem{proposition}[theorem]{Proposition}
\newtheorem{corollary}[theorem]{Corollary}
\newtheorem{them}{Theorem}
\newtheorem{lema}[them]{Lemma}
\newtheorem{example}[theorem]{Example}
\newtheorem{remark}[theorem]{Remark}
\begin{document}

\title{Roman domination excellent graphs: trees}

\author[]{Vladimir Samodivkin}
\address{Department of Mathematics, UACEG, Sofia, Bulgaria}
\email{vl.samodivkin@gmail.com}
\today
\keywords{Roman domination number, excellent graph}

\begin{abstract}
A Roman dominating function (RDF) on a graph $G = (V, E)$ 
is a labeling $f : V  \rightarrow \{0, 1, 2\}$ such
that every vertex with label $0$ has a neighbor with label $2$. 
The weight of $f$ is the value $f(V) = \Sigma_{v\in V} f(v)$
The Roman domination number, $\gamma_R(G)$, of $G$ is the
minimum weight of an RDF on $G$.
An RDF of minimum weight is called a $\gamma_R$-function.
A graph G is said to be $\gamma_R$-excellent if for each vertex $x \in V$
there is a $\gamma_R$-function $h_x$ on $G$ with $h_x(x) \not = 0$. 
We present  a constructive characterization of $\gamma_R$-excellent trees using labelings. 
A graph $G$ is said to be in class $UVR$ if $\gamma(G-v) = \gamma (G)$ for each $v \in V$, 
where $\gamma(G)$ is the domination number of $G$. 
 We show that each tree in $UVR$ is $\gamma_R$-excellent.  
\end{abstract}

\maketitle


\section{Introduction and preliminaries}

For basic notation and graph theory terminology not explicitly defined here, we
in general follow Haynes, Hedetniemi and Slater \cite{hhs1}. Specifically, let $G$ be a
graph with vertex set $V (G)$ and edge set $E(G)$. 
A {\em spanning subgraph} for $G$ is a subgraph of $G$ which contains every vertex of $G$. 
In a graph $G$, for a subset $S \subseteq V (G)$ the {\em subgraph induced} by $S$ is the graph
$\left\langle S \right\rangle$ with vertex set $S$ and edge set $\{xy \in E(G) \mid x, y \in S\}$.
The complement $\overline{G}$ of $G$ is the graph whose
vertex set is $V (G)$ and whose edges are the pairs of nonadjacent vertices of $G$.
We write $K_n$ for the {\em complete graph} of order $n$ and $P_n$ for the  {\em path} 
on $n$ vertrices. Let $C_m$ denote the {\em cycle} of length $m$.
 For any vertex $x$ of a graph $G$, $N_G(x)$ denotes the set of all neighbors 
of $x$ in $G$, $N_G[x] = N_G(x) \cup \{x\}$  and the degree of $x$ is $deg_G(x) = |N_G(x)|$. 
The {\em minimum} and {\em maximum} degrees
 of a graph $G$ are denoted by $\delta(G)$ and $\Delta(G)$, respectively.
For a subset $S \subseteq V (G)$, let $N_G[S] = \cup_{v \in S}N_G[v]$.
The {\em external private neighborhood} $epn(v, S)$ of $v \in S$ is defined by
$epn(v, S)= \{ u \in V(G)-S \mid N_G(u) \cap S = \{v\}\}$. 
A {\em leaf} is a vertex of degree one and a {\em stem} is a vertex adjacent to a leaf.
If $F$ and $H$ are disjoint graphs,  $v_F \in V(F )$ and $v_H \in V(H)$, 
 then the {\em coalescence}  $(F \cdot H)(v_F , v_H : v)$ of $F$ and $H$ via $v_F$ and $v_H$, 
is the graph obtained from the union of $F$ and $H$ by identifying $v_F$ and $v_H$
in a vertex labeled $v$. If  $F$ and $H$ are graphs with exactly  one vertex in common, say $x$,  
then the {\em coalescence} $(F \cdot H)(x)$                                   
 of $F$ and $H$ via $x$ is the union of $F$ and $H$. 

Let $\mathtt{Y}$ be a finite set of  integers which has  positive as well as non-positive  elements.   
Denote by $P({\mathtt{Y}})$ the collection of all subsets of $\mathtt{Y}$.   
Given  a graph $G$,  for a $\mathtt{Y}$-valued function $f :  V(G) \rightarrow  \mathtt{Y}$
 and a subset $S$ of $V (G)$ we define $f(S) = \Sigma_{v\in S} f(v)$.  
The {\em weight}  of $f$ is $f(V(G))$.  
A $\mathtt{Y}$-{\em valued Roman dominating function} on a graph $G$  
is a function $f :  V(G) \rightarrow  \mathtt{Y}$ satisfying the conditions: 
(a) $f(N_G[v]) \geq 1$ for each $v \in V(G)$, and 
(b)  if $v \in V(G)$ and $f(v) \leq 0$, then there is $u_v \in N_G(v)$ with $f(u_v) = \max \{k \mid k \in \mathtt{Y}\}$. 
For a $\mathtt{Y}$-valued Roman dominating function $f$ on a graph $G$,
 where $\mathtt{Y} = \{r_1, r_2,\dots,r_k\}$ and $r_1 < r_2< \dots < r_k$, 
let $V^f_{r_i} = \{v \in V(G) \mid f(v) = r_i\}$ for $i = 1, ..,k$. 
Since these $k$ sets determine $f$, we can equivalently write $f = (V^f_{r_1}; V^f_{r_2}; \dots ; V^f_{r_k})$.
If $f$ is $\mathtt{Y}$-valued Roman dominating function on a graph $G$
 and $H$ is a subgraph  of $G$, then we denote the restriction of $f$ on $H$ by $f|_H$.
The $\mathtt{Y}$-{\em Roman domination number} of a graph $G$, denoted $\gamma_{{\mathtt{Y}}R}(G)$, 
is defined to be  the minimum weight of a $\mathtt{Y}$-valued dominating function on $G$. 
As examples, let us mention: 
(a) the domination number $\gamma(G) \equiv \gamma_{{\{0,1\}}R}(G)$, 
(b) the minus domination number \cite{dhhm}, where $\mathtt{Y} = \{-1,0,1\}$, 
(c) the signed domination number \cite{dhhs}, where $\mathtt{Y}=\{-1,1\}$, 
(d) the Roman domination number $\gamma_R(G) \equiv \gamma_{{\{0,1,2\}}R}(G)$ \cite{cdhh}, and 
 (e) the signed Roman domination number  \cite{ahlzs}, where  $\mathtt{Y} = \{-1,1,2\}$. 
A $\mathtt{Y}$-valued Roman dominating function $f$ on $G$ 
with  weight $\gamma_{{\mathtt{Y}}R}(G)$ is called 
 a $\gamma_{\mathtt{Y}R}$-{\em function} on $G$.  

Now we  introduce a new partition of a vertex set of a graph, which play a key role  in the paper.
In  determining this  partition, all $\gamma_{\mathtt{Y}R}$-functions of a graph  are necessary. 
For each  $\mathtt{X} \in P({\mathtt{Y}})$ we define the set  $V^{\mathtt{X}}(G) $ 
 as consisting of all $v \in V(G)$ with $\{f(v) \mid f \mbox{\ is a } \gamma_{\mathtt{Y}R}\mbox{-function on } G\} = \mathtt{X}$. 
Then all   members of the family $(V^{\mathtt{X}}(G))_{\mathtt{X} \in P(\mathtt{Y} )}$
 clearly form a partition of $V(G)$. 
We call this partition the $\gamma_{\mathtt{Y}R}$-\emph{partition of }$G$. 

Fricke et al. \cite{fhhhl}  in 2002 began the study of graphs,
which are excellent with respect to various graph parameters.
Let us concentrate here on the parameter $\gamma_{{\mathtt{Y}}R}$.
 A vertex $v \in V(G)$ is said to be  (a) $\gamma_{{\mathtt{Y}}R}$-\emph{good}, 
if $h(v) \geq 1$ for some $\gamma_{{\mathtt{Y}}R}$-function $h$ on $G$, and 
(b) $\gamma_{{\mathtt{Y}}R}$-\emph{bad}  otherwise.   
A  graph $G$ is said to be $\gamma_{\mathtt{Y}R}$-{\em excellent}  if all vertices of $G$
 are $\gamma_{\mathtt{Y}R}$-good.                                       
Any vertex-transitive graph is $\gamma_{\mathtt{Y}R}$-excellent. 
Note also that the set of all $\gamma$-good and the set of all $\gamma$-bad vertices of a graph $G$ 
form the $\gamma$-partition of $G$. For further results on this topic see e.g.   \cite{bs,hh1,hh,h2,j,sam1,sam2}.

In this paper we begin an investigation of  $\gamma_{\mathtt{Y}R}$-excellent graphs in the case when   $\mathtt{Y} = \{0,1,2\}$.
In what follows we shall write $\gamma_R$ instead of $\gamma_{\mathtt{\{0,1,2\}}R}$, 
and we shall abbreviate a $\{0,1,2\}$-valued Roman dominating function to an RD-function.  
Let us describe  all members of  the $\gamma_R$-partition of any graph $G$ 
 (we write $V^i(G)$, $V^{ij}(G)$ and  $V^{ijk}(G)$ instead of $V^{\{i\}}(G), V^{\{i,j\}}(G)$ and $V^{\{i, j, k\}}(G)$, respectively).
\begin{itemize}
\item[(i)]   $V^i(G) = \{x \in V(G) \mid f(x) = i \mbox{ for each } \gamma_R\mbox{-function } f \mbox{ on } G\}, i = 1,2,3$;
\item[(ii)]  $V^{012}(G) = \{x \in V(G) \mid \mbox{there are } \gamma_R\mbox{-functions } f_x,g_x,h_x  \mbox{ on } G \mbox{ with }$

             \hspace{1.5cm}                                           $f_x(x) =0, g_x(x) = 1 \mbox{ and }  h_x(x)=2\}$;
\item[(iii)]  $V^{ij}(G) = \{x \in V(G) - V^{012}(G) \mid \mbox{there are } \gamma_R\mbox{-functions } f_x \mbox{ and } g_x  \mbox{ on } G $

                     \hspace{1.4cm}    $\mbox{ with }  f_x(x) =i \mbox{ and } g_x(x) = j \} , 0\leq i < j \leq 2$. 
\end{itemize}

Clearly a  graph $G$ is $\gamma_R$-excellent if and only if $V^0(G) = \emptyset$.

It is often of interest to known how the value of a graph parameter is affected 
when a small change is made in a graph.
 In this connection, Hansberg, Jafari Rad and Volkmann studied in \cite{rhv}
changing and unchanging of the Roman domination number 
 of a graph when a vertex is deleted, or an edge is added. 

\begin{lema}\label{minus} {\rm(\cite{rhv})} 
Let $v$ be a vertex of a graph $G$. Then $\gamma_R(G-v) < \gamma_R(G)$ 
 if and only if there is a $\gamma_R$-function $f = (V_0^f; V_1^f; V_2^f)$ on $G$
 such that $v \in V_1^f$. If $\gamma_R(G-v) < \gamma_R(G)$ then $\gamma_R(G-v) = \gamma_R(G) - 1$. 
\end{lema}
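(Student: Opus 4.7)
The plan is to prove the two implications separately, with the equality $\gamma_R(G-v) = \gamma_R(G)-1$ falling out of the forward direction as a byproduct. Both directions amount to noting that a vertex labeled $1$ in an RD-function is ``self-dominating'' and therefore essentially disconnected from the dominating structure, so it can be freely added or deleted at a cost of exactly $1$.

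For the forward direction, suppose $\gamma_R(G-v) < \gamma_R(G)$. I would pick a $\gamma_R$-function $g$ on $G-v$ and extend it to a function $f$ on $G$ by setting $f(v)=1$ and $f(u)=g(u)$ for $u\ne v$. The key check is that $f$ is an RD-function on $G$: for every $u\in V(G)\setminus\{v\}$ with $f(u)=0$ we have $g(u)=0$, so $u$ has a neighbor $w\in V(G-v)$ with $g(w)=f(w)=2$; and $v$ itself has $f(v)=1\ge 1$, so the Roman condition is vacuous at $v$. Then $\gamma_R(G)\le f(V(G)) = g(V(G-v))+1 = \gamma_R(G-v)+1$, which combined with the hypothesis $\gamma_R(G-v)<\gamma_R(G)$ forces $\gamma_R(G-v)=\gamma_R(G)-1$. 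In particular $f$ has weight $\gamma_R(G)$, so $f$ is a $\gamma_R$-function on $G$ with $v\in V_1^f$.

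For the reverse direction, suppose $f=(V_0^f;V_1^f;V_2^f)$ is a $\gamma_R$-function on $G$ with $v\in V_1^f$, and let $g$ be the restriction $f|_{G-v}$. Its weight is $\gamma_R(G)-1$. I need to verify $g$ is an RD-function on $G-v$: for every $u\in V(G-v)$ with $g(u)=f(u)=0$, the original function $f$ gives some $w\in N_G(u)$ with $f(w)=2$, and since $f(v)=1\ne 2$ we have $w\ne v$, so $w\in N_{G-v}(u)$ and $g(w)=2$. Hence $\gamma_R(G-v)\le \gamma_R(G)-1 < \gamma_R(G)$.

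There is no real obstacle here — the only subtle point is the reverse direction, where one must observe that a vertex of label $1$ cannot be the unique $2$-neighbor of any $0$-vertex, which is precisely why deleting it does not spoil any Roman domination constraint. The argument is entirely local and requires no induction, case analysis, or structural results about $G$.
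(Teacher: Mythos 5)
Your proof is correct. The paper does not prove this lemma at all --- it is quoted from Hansberg, Jafari Rad and Volkmann \cite{rhv} without proof --- and your argument is the standard one: extending a $\gamma_R$-function of $G-v$ by assigning $1$ to $v$ gives $\gamma_R(G)\le\gamma_R(G-v)+1$, and restricting a $\gamma_R$-function with $f(v)=1$ works because a $0$-vertex can never rely on $v$ for its label-$2$ neighbor. Both verifications are carried out correctly, and the equality $\gamma_R(G-v)=\gamma_R(G)-1$ does indeed fall out of the forward direction as you describe.
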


 Lemma \ref{minus} implies that $V^1(G), V^{01}(G), V^{12}(G), V^{012}(G)$
 form a partition of $V^-(G) = \{x \in V(G) \mid \gamma_R(G-x) +1 =  \gamma(G)\}$.

\begin{lema}\label{addedge} {\rm(\cite{rhv})} 
Let $x$ and $y$ be  non-adjacent vertices  of a graph $G$.
Then $\gamma_R(G) \geq \gamma_R(G+xy) \geq \gamma_R(G)-1$. 
Moreover, $\gamma_R(G+xy) = \gamma_R(G)-1$ if and only if  there is a $\gamma_R$-function 
$f$ on $G$  such that $\{f(x), f(y)\} = \{1, 2\}$.
\end{lema}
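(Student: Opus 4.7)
My plan is to decompose the statement into three separate claims and verify each in turn: the upper bound $\gamma_R(G+xy) \leq \gamma_R(G)$, the lower bound $\gamma_R(G+xy) \geq \gamma_R(G)-1$, and the characterization of equality in the lower bound. The first claim is immediate, since any RD-function on $G$ remains an RD-function on $G+xy$: adding an edge can only enlarge $N[x]$ and $N[y]$, so conditions (a) and (b) are preserved. Applying this to a $\gamma_R$-function on $G$ yields the upper bound.

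For the lower bound I would start with a $\gamma_R$-function $g$ on $G+xy$ and show that a minor correction turns it into an RD-function on $G$ of weight at most $g(V)+1$. If $g$ is already an RD-function on $G$, there is nothing to do. Otherwise the only way condition (b) can fail in $G$ but hold in $G+xy$ is that the witness of a $0$-labeled endpoint of $xy$ is the other endpoint; say $g(x)=0$ and $g(y)=2$ with $y$ being the unique label-$2$ neighbor of $x$ in $G+xy$. I would then define $f$ on $G$ by $f(x)=1$ and $f(z)=g(z)$ otherwise. Because only $x$'s label changed and it increased, condition (a) is only strengthened at $x$ and unaffected elsewhere, and condition (b) becomes vacuous at $x$ and is untouched at all other vertices. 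Hence $f$ is an RD-function on $G$ with $f(V)=g(V)+1$, giving $\gamma_R(G)\leq \gamma_R(G+xy)+1$. The case $g(y)=0$, $g(x)=2$ is symmetric.

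For the characterization, the easy direction is \emph{if}: given a $\gamma_R$-function $f$ on $G$ with, say, $f(x)=1$ and $f(y)=2$, set $g(x)=0$ and $g(z)=f(z)$ elsewhere. In $G+xy$ the vertex $x$ has the neighbor $y$ with $g(y)=2$, so $g$ is an RD-function of weight $\gamma_R(G)-1$, which combined with the lower bound forces $\gamma_R(G+xy)=\gamma_R(G)-1$. For \emph{only if}, suppose $\gamma_R(G+xy)=\gamma_R(G)-1$ and let $g$ be a $\gamma_R$-function on $G+xy$. Then $g$ cannot be an RD-function on $G$, else $\gamma_R(G)\leq g(V)=\gamma_R(G)-1$, a contradiction. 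Running the lifting procedure of the previous paragraph therefore produces an RD-function $f$ on $G$ of weight exactly $\gamma_R(G)$ with $\{f(x),f(y)\}=\{1,2\}$, and this $f$ is the required $\gamma_R$-function.

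I expect the only subtle step to be the lifting: one must argue that when $g$ fails to be an RD-function on $G$, the failure is localized to a $0$-labeled endpoint of $xy$ whose label-$2$ witness in $G+xy$ is the other endpoint, and that simply raising that endpoint's label to $1$ restores all the conditions. Once this bookkeeping is recorded, the characterization follows by applying the lifting and its reverse symmetrically.
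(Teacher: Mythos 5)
The paper does not prove this lemma itself — it is quoted from Hansberg, Jafari Rad and Volkmann \cite{rhv} without proof — so there is nothing internal to compare against. Your argument is correct and complete: the upper bound by monotonicity of RD-functions under edge addition, the lower bound by the local lift (raising the label of the unique failing endpoint from $0$ to $1$, which cannot break the condition at any other vertex since no label $2$ is removed), and the equality characterization by running the lift and its reverse; this is the standard proof of the cited result.
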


 The same authors defined  the following two classes of graphs: 
\begin{itemize}
\item[(i)]  $\mathcal{R}_{CVR}$  is the class of graphs $G$ 
                     such that $\gamma_R(G-v) < \gamma_R(G)$ for all $v \in V (G)$.
\item[(ii)]  $\mathcal{R}_{CEA}$  is the class of graphs $G$ 
                     such that $\gamma_R(G+e) < \gamma_R(G)$ for all $e\in E (\overline{G})$.
\end{itemize}

\begin{remark} \label{extension} 
By the above two lemmas it easy follows that:
\begin{itemize}
 \item[(i)] each graph in  $\mathcal{R}_{CVR} \cup \mathcal{R}_{CEA}$ is $\gamma_R$-excellent, 
\item[(ii)] if $G$ is a $\gamma_R$-excellent graph, $e \in E(\overline{G})$ and $\gamma_R(G) = \gamma_R(G+e)$, 
                    then $G+e$ is  $\gamma_R$-excellent,  
\item[(iii)]  each graph (in particular each $\gamma_R$-excellent graph)
                     is a spanning subgraph of a graph in $\mathcal{R}_{CEA}$ with the same Roman domination number.    
\end{itemize} 
\end{remark}


Denote by $\mathtt{G}_{n,k}$ the family of all  mutually non-isomorphic $n$-order $\gamma_R$-excellent connected 
graphs having the Roman domination number equal to $k$. 
With the family $\mathtt{G}_{n,k}$,  we associate  the poset  $\mathbb{RE}_{n,k} = (\mathtt{G}_{n,k}, \prec)$
with the order $\prec$ given by $H_1 \prec H_2$ if and only if
$H_2$ has a spanning subgraph which is isomorphic to $H_1$ 
 (see \cite{tro} for terminology on posets).
Remark \ref{extension} shows that all maximal elements of  $\mathbb{RE}_{n,k}$ are in $\mathcal{R}_{CEA}$.
Here we concentrate on the set of all minimal elements of $\mathbb{RE}_{n,k}$. 
Clearly a  graph $H \in \mathtt{G}_{n,k}$ is a minimal element of $\mathbb{RE}_{n,k}$
if and only if for each $e \in E(H)$ at least  one of the following holds: 
(a) $H-e$ is not connected,  (b) $\gamma_R(H) \not = \gamma_R(H-e)$, and  
(c) $H-e$ is not $\gamma_R$-excellent.  
All trees in $\mathtt{G}_{n,k}$ are obviously minimal elements of $\mathbb{RE}_{n,k}$.

The remainder of this paper is organized as follows.
In Section 2, we formulate our main result, namely, 
 a constructive characterization of $\gamma_R$-excellent trees. 
We present a proof of this result in Sections 3 and 4. 
Applications of our main result are given in Sections 5 and 6.  
We conclude in Section 7 with some open problems.

We end this section with the following useful result. 
\begin{lema}\label{on} {(\rm{\cite{cdhh}})} 
Let $f = (V_0^f; V_1^f; V_2^f)$ be any $\gamma_R$-function on a graph $G$. 
Then each component of a  graph $\left\langle V_1^f \right\rangle$ has order at most $2$  and 
no edge of $G$ join $V_1^f$ and $V_2^f$. 
\end{lema}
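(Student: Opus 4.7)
The plan is to prove both assertions by contradiction, in each case exhibiting a strictly lighter RD-function to contradict the minimality of $f$.

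For the second assertion, suppose $uv$ is an edge of $G$ with $u \in V_1^f$ and $v \in V_2^f$. I would define $f'$ to agree with $f$ everywhere except $f'(u) = 0$. The weight drops by $1$. To verify $f'$ is still an RD-function, I only need to check the vertices whose label became $0$; the only new such vertex is $u$, which has the neighbor $v$ with $f'(v) = 2$, and any vertex previously labeled $0$ still retains the same neighbor in $V_2^f = V_2^{f'}$. Hence $f'$ is an RD-function of weight $\gamma_R(G) - 1$, a contradiction.

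For the first assertion, assume some component $C$ of $\langle V_1^f \rangle$ has order at least $3$. Then $C$ (being connected) contains a vertex $u$ with at least two neighbors $u_1, u_2 \in V_1^f$. I would define $f'$ by $f'(u) = 2$, $f'(u_1) = f'(u_2) = 0$, and $f'(w) = f(w)$ otherwise. The weight of $f'$ is $f(V(G)) + 1 - 1 - 1 = \gamma_R(G) - 1$. To check that $f'$ is an RD-function, note that $u_1, u_2$ are now labeled $0$ but have the neighbor $u$ with $f'(u) = 2$. Every other vertex with label $0$ under $f'$ was already labeled $0$ under $f$; its dominating neighbor in $V_2^f$ is unchanged, since we only modified vertices originally in $V_1^f$. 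This contradicts $\gamma_R(G) = f(V(G))$.

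The two arguments are short local modifications of $f$, and the main thing to be careful about is precisely the verification that the modified function remains an RD-function — in particular, the fact that the vertices we are redefining had label $1$ (not $2$) ensures that no previously dominated $0$-vertex loses its witness in $V_2$. No further case analysis is needed.
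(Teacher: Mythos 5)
Your proof is correct: both local modifications are valid, the weight computations are right, and you correctly verify that every $0$-labeled vertex under $f'$ retains a witness in $V_2^{f'}$ (the key point being that only vertices of $V_1^f$ are relabeled, so $V_2^{f'} \supseteq V_2^f$). The paper states this lemma without proof, citing Cockayne et al., and your argument is essentially the standard one given there.
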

In most cases Lemma \ref{minus},  Lemma \ref{addedge} and Lemma \ref{on} 
                                   will be used in the sequel without specific reference.


\section{The main result}

In this section we present  a constructive characterization of $\gamma_R$-excellent trees using labelings. 
We define a 
{\em labeling}  of a tree $T$  as a function $S:V (T ) \rightarrow\{A,B,C,D\}$. 
A labeled tree is denoted by a pair $(T , S)$. 
 The label of a vertex $v$ is also called its {\em status},  denoted $sta_T(v:S)$
 or $sta_T(v)$ if the labeling $S$ is clear from context. 
 We denote the sets of vertices of status $A, B,C$ and $D$ 
 by $S_A(T ), S_B(T), S_C(T)$ and $S_D(T )$, respectively. 
	In all figures in this paper we use $\bullet$ for a vertex of status $A$, 
 {\tiny  $\blacktriangledown$} for a vertex of status $B$, {\tiny $\blacklozenge$}  for a vertex of status $C$,    
	and  $\circ$ for a vertex of status $D$.
	If $H$ is a subgraph  of $T$, then we denote the restriction of $S$ on $H$ by $S|_H$.

\begin{figure}[h]
	\centering
		\includegraphics{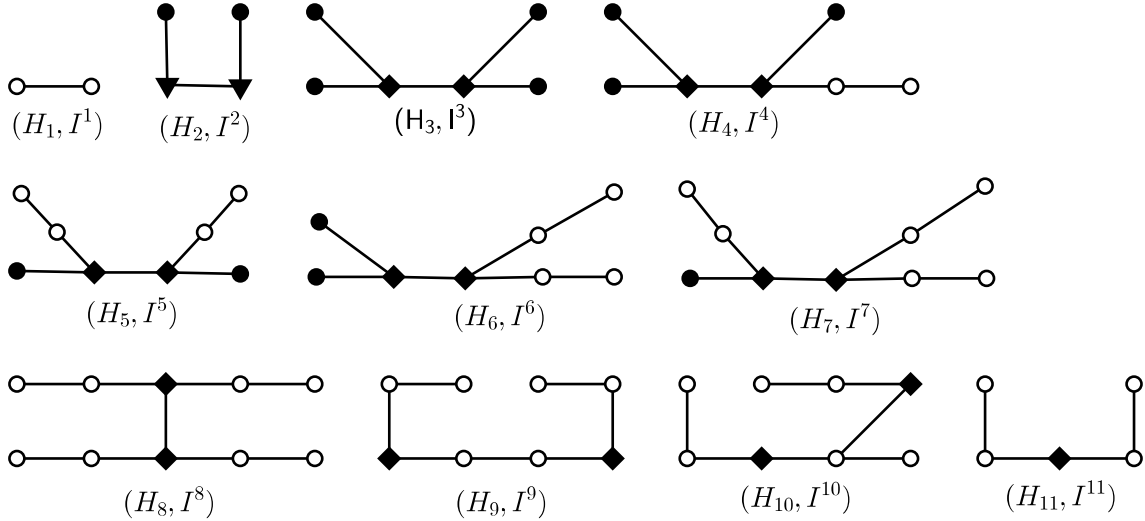}
	\caption{All trees with $|L_B \cup L_C| \leq 2$. }
	\label{fig:Fig 1}
\end{figure}

\begin{figure}[htbp]
	\centering
		\includegraphics{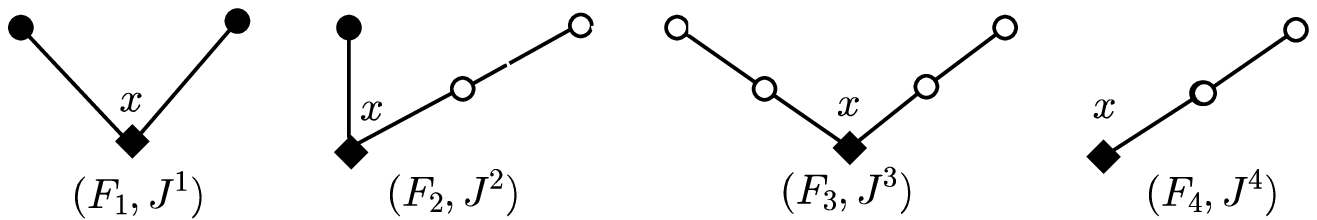}
	\caption{}
	\label{fig:F1234}
\end{figure}

	To state a characterization of $\gamma_R$-excellent trees, we introduce  four  types of operations. 
Let $\mathscr{T}$ be the family of labeled trees $(T, S)$ that can be obtained from a sequence of labeled trees 
$\tau: (T^1, S^1), \dots, (T^j, S^j)$, ($j \geq 1$),  
such that  $(T^1, S^1)$ is in $\{(H_1, I^1),.., (H_{5}, I^5)\}$ (see  Figure \ref{fig:Fig 1}) and $(T, S) = (T^j, S^j)$, 
and, if $j \geq 2$,  $(T^{i+1}, S^{i+1})$ can be obtained recursively from $(T^i, S^i)$ 
 by one of the  operations $O_1,  O_2, O_3$ and $O_4$ listed below;
 in this case  $\tau$ is said to be  a  $\mathscr{T}$-{\em sequence}   of $T$. 
When the context is clear we shall write $T \in \mathscr{T}$ instead of $(T,S)  \in \mathscr{T}$.
\medskip
 \\
{\bf Operation}  {$O_1$}.  The labeled tree $(T^{i+1},S^{i+1})$ is obtained from 
$(T^i,S^i)$ and $(F,J) \in \{(F_1, J^1), (F_2, J^2), (F_3, J^3)\}$ (see  Figure \ref{fig:F1234})  by adding
 the edge $ux$, where $u \in V (T_i)$, $x \in V(F)$ and  $sta_{T^i}(u) = sta_F(x) = C$.
\medskip
 \\
{\bf Operation}  {$O_2$}.  The labeled tree $(T^{i+1},S^{i+1})$ is obtained from 
 $(T^i,S^i)$ and $(F_4, J^4)$ (see  Figure \ref{fig:F1234})  by adding  the edge $ux$, 
where $u \in V (T^i)$, $x \in V(F_4)$,  $sta_{T^i}(u) = D$, and $sta_{F_4}(x) = C$.
\medskip
 \\
{\bf Operation}   {$O_3$}.   The labeled tree $(T^{i+1},S^{i+1})$ is obtained from 
 $(T^i,S^i)$ and $(H_k, I^k)$, $k \in \{2,..,7\}$ (see  Figure \ref{fig:Fig 1}),   
in such a way that  $T^{i+1}= (T^i \cdot H_k)(u,v:u)$, 
where $sta_{T^i}(u) = sta_{H_k}(v) = A$, and $sta_{T^{i+1}}(u) = A$.
\medskip
 \\
{\bf Operation}  {$O_4$}. 
The labeled tree $(T^{i+1},S^{i+1})$ is obtained from 
 $(T^i,S^i)$ and $(H_k, I^k)$, $k \in \{3,4,6\}$ (see  Figure \ref{fig:Fig 1}),   
in such a way that  $T^{i+1}= (T^i \cdot H_k)(u,v:u)$, 
where $sta_{T^i}(u) = D$, $sta_{H_k}(v) = A$, and $sta_{T^{i+1}}(u) = D$.
\medskip

Remark that if $y \in V(T^i)$ and $i \leq k \leq j$, then $sta_{T^i}(y) = sta_{T^k}(y)$.  
Now we are prepared to state the main result. 

\begin{theorem}\label{main}
Let $T$ be a  tree of order at least $2$. 
Then $T$ is $\gamma_R$-excellent if and only if 
there is a labeling $S:V (T ) \rightarrow\{A,B,C,D\}$ 
such that $(T, S)$ is in $\mathscr{T}$. 
Moreover, if $(T, S) \in \mathscr{T}$ then 
	\begin{itemize}
	\item[($\mathcal{P}_1$)] 	 $S_B(T) = \{x \in V^{02}(T) \mid deg(x) =2 $ and $|N(x) \cap V^{02}(T)| =1\}$, 
	$S_A(T) = V^{01}(T)$, $S_D(T) = V^{012}(T)$, and $S_C(T) = V^{02}(T) - S_B(T)$.  		
\end{itemize}
\end{theorem}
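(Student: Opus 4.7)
The plan is to prove Theorem \ref{main} by establishing both directions simultaneously with the structural description $(\mathcal{P}_1)$, since the labeling produced by the construction is forced by $(\mathcal{P}_1)$ to coincide with the $\gamma_R$-partition of $T$. This avoids having to argue separately that the labeling is uniquely determined.

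For the sufficiency direction I would induct on the length $j$ of a $\mathscr{T}$-sequence $\tau:(T^1,S^1),\ldots,(T^j,S^j)=(T,S)$. In the base case $j=1$, the tree is one of the five seeds in Figure \ref{fig:Fig 1}; for each I would enumerate all $\gamma_R$-functions directly, verify $\gamma_R$-excellence, and check that the prescribed labels $A,B,C,D$ match the classes $V^{01},\;\{x\in V^{02}:\deg(x)=2,\,|N(x)\cap V^{02}|=1\},\;V^{02}\setminus S_B,\;V^{012}$. For the inductive step I would verify each of $O_1,O_2,O_3,O_4$ in turn. The key computation is to determine $\gamma_R(T^{i+1})$ from $\gamma_R(T^i)$ together with $\gamma_R$ of the added piece (a pendant component glued by an edge for $O_1,O_2$, or a coalescence for $O_3,O_4$), and then, for every vertex $y$ of $T^{i+1}$ and every value $v$ in its new status, to exhibit a $\gamma_R$-function $h$ on $T^{i+1}$ with $h(y)=v$. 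These functions are built by pasting together a $\gamma_R$-function on $T^i$ with one on the added tree; Lemma \ref{on} controls which values at the attachment vertex are compatible, and Lemma \ref{minus} identifies which adjustments change weight by one.

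For the necessity direction I would induct on $|V(T)|$. The small-order cases are handled by matching $T$ against the five seeds. For larger $T$, root $T$ at an endpoint of a diametral path and focus on a deepest leaf $\ell$, its stem $s$, the parent $p$ of $s$, and (when needed) the grandparent $q$. A case analysis on $\deg(s)$, on whether $p$ is itself a stem, and on $\deg(p)$, produces a small labeled pendant subtree $F$ matching one of $F_1,F_2,F_3,F_4$ or one of the seeds $H_k$ ($k\in\{2,\ldots,7\}$) attachable by coalescence, such that $T=T'\cup F$ (edge-attached) or $T=(T'\cdot H_k)(u,v:u)$. The central claim to check for each case is: (a) the attachment vertex of $T'$ has exactly the status required by the chosen operation under $(\mathcal{P}_1)$ applied to $T'$ (status $C$ for $O_1$, $D$ for $O_2,O_4$, $A$ for $O_3$); (b) $T'$ is itself $\gamma_R$-excellent; and (c) $\gamma_R(T)=\gamma_R(T')+c$ for the correct constant $c$. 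Then the inductive hypothesis supplies a $\mathscr{T}$-sequence for $T'$, which is extended by the corresponding operation.

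The main obstacle is item (b) in the inductive step: showing that the residual tree $T'$ remains $\gamma_R$-excellent and that the attachment vertex has the precisely required status. This requires translating $\gamma_R$-functions on $T$ into $\gamma_R$-functions on $T'$ and back, controlling what any optimal RD-function of $T$ must look like on $F$ (using Lemma \ref{on} to exclude edges from $V_1$ to $V_2$ and to bound components of $\langle V_1\rangle$), and ruling out the possibility that excellence of some vertex in $T'$ only persists through a $\gamma_R$-function of $T$ that puts an unexpected label on $F$. Once all cases are dispatched, $(\mathcal{P}_1)$ is read off directly from the explicit $\gamma_R$-functions constructed in the sufficiency direction, which show that every vertex of a given status admits $\gamma_R$-functions taking exactly the prescribed values, and that no further values are possible (again via Lemma \ref{on} and the weight bookkeeping).
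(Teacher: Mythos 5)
Your overall architecture (induction along the construction sequence for sufficiency, decomposition along a diametral path for necessity) matches the paper's, but there is a genuine gap in your plan for the necessity direction. You propose to induct on $|V(T)|$ and, at each reduction, to verify that the residual tree $T'$ is itself $\gamma_R$-excellent and that the attachment vertex lies in exactly the $\gamma_R$-class demanded by the chosen operation. You correctly flag this as ``the main obstacle'' but offer no mechanism to overcome it, and it is not routine: a $\gamma_R$-function of $T$ witnessing goodness of a vertex $y\in V(T')$ need not restrict to an RD-function on $T'$ at all (the attachment vertex may be defended only from inside the pendant piece), so translating excellence of $T$ into excellence of $T'$ requires detailed control of optimal functions near the cut, which is essentially the content you are trying to establish. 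The paper sidesteps this entirely: it first proves (Lemmas \ref{12}, \ref{no3}, \ref{adj}) that the partition-induced labeling of any $\gamma_R$-excellent tree satisfies a purely local, combinatorial list of conditions $(\mathcal{P}_2)$ (independence of the $A$-class, $K_2$-components for $B$ and $D$, degree and adjacency constraints for $B$ and $C$), and then runs the diametral-path induction inside the class $\mathscr{T}_1$ of trees so labeled. Membership of the residual tree in $\mathscr{T}_1$ is then an immediate degree/adjacency check, with no domination theory needed in the inductive step. Without this (or an equivalent device), your case analysis cannot close.

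A second, smaller gap: you assert that $(\mathcal{P}_1)$ ``is read off directly'' from the $\gamma_R$-functions built in the sufficiency direction. Exhibiting functions realizing the prescribed values is indeed direct, but $(\mathcal{P}_1)$ also requires that vertices of the old tree $T^{j-1}$ do not acquire \emph{new} values after an operation -- e.g.\ that no $x\in V^{02}(T^{j-1})$ becomes a $V^{012}$-vertex of $T$ after an $O_1$-attachment. Ruling this out is a substantive argument in the paper (the step establishing $(\alpha_4)$ in Case 7, which re-decomposes the $\mathscr{T}$-sequence and invokes Proposition \ref{coales12} and Lemmas \ref{no3} and \ref{adj}), and analogous arguments $(\alpha_5)$, $(\alpha_6)$ are needed for $O_2$ and $O_4$. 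Your weight-bookkeeping via Lemmas \ref{minus} and \ref{on} bounds the possible values at the attachment vertex but does not by itself preclude such promotions at vertices deep inside $T^{j-1}$.
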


\section{Preparation for the proof of Theorem \ref{main}}

\subsection{\bf Coalescence}

We shall concentrate on the coalescence of two graphs via a vertex in $V^{01}$
 and derive the properties which will be needed for the proof of our main result.

\begin{proposition} \label{coal01}
Let $G = (G_1 \cdot G_2)(x)$ be a connected graph and $x \in V^{01}(G)$.
 Then the following holds.
\begin{itemize}
\item[(i)] If $f$ is a $\gamma_R$-function on $G$ and $f(x)=1$,  
           then $f|_{G_i}$ is a $\gamma_R$-function  on $G_i$, 
           and $f|_{G_i-x}$ is a $\gamma_R$-function on $G_i-x$, $i=1,2$.   
\item[(ii)] $\gamma_R(G) = \gamma_R(G_1) + \gamma_R(G_2) -1$.
\item[(iii)] If $h$ is a $\gamma_R$-function  on $G$ and $h(x)=0$, then 
             exactly one of the following holds:
        \begin{itemize}
        \item[(iii.1)] $h|_{G_1}$ is a $\gamma_R$-function  on $G_1$, 
                       $h|_{G_2-x}$ is a $\gamma_R$-function  on $G_2-x$, and 
                       $h|_{G_2}$ is no  RD-function on $G_2$;
        \item[(iii.2)] $h|_{G_1-x}$ is a $\gamma_R$-function  on $G_1-x$, 
                       $h|_{G_1}$  is no RD-function on $G_1$, and 
                       $h|_{G_2}$ is a $\gamma_R$-function  on $G_2$. 
        \end{itemize}  
\item[(iv)] Either $\{x\} = V^{01}(G_1) \cap V^{01}(G_2)$ or  
            $\{x\} = V^{01}(G_i) \cap V^{1}(G_j)$, where $\{i,j\} = \{1,2\}$.       
\end{itemize}
\end{proposition}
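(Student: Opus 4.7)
The plan is to prove (i) first by a splice/swap argument, then derive (ii) as an immediate consequence, then analyze (iii) via a case split on where $x$'s value-$2$ witness sits, and finally extract (iv) from (i)--(iii) combined with $x \in V^{01}(G)$.

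For (i), I would first check that $f|_{G_i}$ is an RD-function on $G_i$: since $f(x)=1 \neq 2$, any $v \in G_i$ with $f(v)=0$ must already have its value-$2$ neighbor in $G_i$ (it cannot be $x$), and all such edges lie inside $G_i$. The same observation, with $x$ deleted, gives that $f|_{G_i-x}$ is an RD-function on $G_i-x$. Optimality is then a standard splice argument: if $f|_{G_1}$ had weight strictly exceeding $\gamma_R(G_1)$, I would replace it by a $\gamma_R$-function $g$ on $G_1$ while leaving $f|_{G_2-x}$ unchanged; the only subtle point is that when $g(x)=0$, $g$ itself supplies $x$ with a value-$2$ neighbor inside $G_1$, so the resulting function is an RD-function on $G$. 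Its weight is strictly less than $f(V(G))$, contradicting optimality of $f$ on $G$. The analogous splice handles $f|_{G_i-x}$. Part (ii) then falls out: $\gamma_R(G_1) + \gamma_R(G_2) = f(V(G_1)) + f(V(G_2)) = f(V(G)) + f(x) = \gamma_R(G)+1$.

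For (iii), $h(x)=0$ forces $x$ to have a value-$2$ neighbor in $G_1$ or in $G_2$. If it had value-$2$ witnesses on \emph{both} sides, then $h|_{G_1}$ and $h|_{G_2}$ would both be RD-functions of their respective graphs, giving $\gamma_R(G) = h(V(G_1)) + h(V(G_2)) \geq \gamma_R(G_1) + \gamma_R(G_2) = \gamma_R(G)+1$ by (ii), which is absurd. So exactly one side carries the witness; WLOG it is $G_1$, giving case (iii.1). Then $h|_{G_1}$ is an RD-function on $G_1$, $h|_{G_2-x}$ is an RD-function on $G_2-x$ (no vertex in $G_2-x$ could be using $x$ as its witness since $h(x)=0$), and $h|_{G_2}$ fails the RD-condition at $x$. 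Optimality of $h|_{G_1}$ comes from the splice trick from (i); matching the weights via (ii) then forces $h(V(G_2-x)) = \gamma_R(G_2)-1$, and Lemma \ref{minus} identifies this common value with $\gamma_R(G_2-x)$, certifying $h|_{G_2-x}$ as a $\gamma_R$-function.

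For (iv), since $x \in V^{01}(G)$, some $\gamma_R$-function $f$ on $G$ has $f(x)=1$; by (i), $f|_{G_i}$ is a $\gamma_R$-function on $G_i$ witnessing $x$ as good in each $G_i$. I would rule out $x$ attaining value $2$ in any $\gamma_R$-function on $G_i$: splicing such a function with $f|_{G_j - x}$ (where $\{i,j\} = \{1,2\}$) yields a $\gamma_R$-function on $G$ with $x\mapsto 2$ by (ii), contradicting $x \in V^{01}(G)$. Hence $x \in V^1(G_i) \cup V^{01}(G_i)$ for each $i$. Finally, the case $x \in V^1(G_1) \cap V^1(G_2)$ is excluded by applying (iii) to a $\gamma_R$-function $h$ on $G$ with $h(x)=0$: this produces a $\gamma_R$-function on one of the $G_i$'s with value $0$ at $x$, contradicting $x \in V^1(G_i)$. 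The main obstacle throughout is the bookkeeping of the splice arguments--one must verify in each case that the pieced-together function is still a valid RD-function on $G$, especially around the identified vertex $x$, and the weight inequalities depend on whether the splice changes the value at $x$.
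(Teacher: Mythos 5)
Your proposal is correct and follows essentially the same route as the paper: the same splice arguments for (i) and (ii), the same case analysis on the location of $x$'s value-$2$ witness for (iii), and the same exclusions (no $\gamma_R$-function on $G_i$ assigns $2$ to $x$, and (iii) rules out $x \in V^{1}(G_1) \cap V^{1}(G_2)$) for (iv). The only cosmetic difference is that you re-run the splice to certify $f|_{G_i-x}$ and $h|_{G_1}$ as optimal where the paper invokes Lemma \ref{minus} and a weight-counting inequality; both are valid.
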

\begin{proof}[\bf Proof.]
(i) and (ii): Since $f(x) =1$,  $f|_{G_i}$ is an RD-function  on $G_i$,  and $f|_{G_i-x}$ is an RD-function on $G_i-x$, $i=1,2$. 
      Assume $g_1$ is a $\gamma_R$-function on $G_1$ with $g(V(G_1)) < f|_{G_1}(V(G_1))$. 
      Define an RD-function $f^\prime$ as follows: $f^\prime(u) = g_1(u)$ for all $u \in V(G_1)$ and 
      $f^\prime(u) = f(u)$ when $u \in V(G_2-x)$. 
      Then $f^\prime(V(G)) = g_1(V(G_1)) + f|_{G_2-x}(V(G_2-x)) < f(V(G))$, a contradiction.
      Thus, $f|_{G_i}$ is a $\gamma_R$-function on $G_i$, $i=1,2$. 
			Now by Lemma \ref{minus}, $f|_{G_i-x}$ is a $\gamma_R$-function on $G_i-x$, $i=1,2$.
			Hence 
      $\gamma_R(G) = f|_{G_1}(V(G_1)) + f|_{G_2}(V(G_2)) - f(x) = \gamma_R(G_1) + \gamma_R(G_2) -1$.
      
 (iii) First note that $h(x) = 0$ implies $h|_{G_i}$ is an RD-function on $G_i$ for some $i\in \{1,2\}$, say $i=1$. 
       If $h|_{G_2}$ is an RD-function on $G_2$ then $\gamma_R(G) =h(V(G)) \geq \gamma_R(G_1) + 
        \gamma_R(G_2)$, a contradiction with (ii). 
        Thus, $h|_{{G_2}-x}$ is an RD-function on $G_2-x$. 
        Now we have 
        $\gamma_R(G_1) + \gamma_R(G_2) -1 = \gamma_R(G) = h(V(G)) = 
         h|_{G_1}(V(G_1)) + h|_{G_2-x}(V(G_2-x)) \geq \gamma_R(G_1) + (\gamma_R(G_2) -1)$. 
         Hence $h|_{G_1}$ is a $\gamma_R$-function on $G_1$ and $h|_{G_2-x}$ 
         is a   $\gamma_R$-function on $G_2-x$. 
         
 (iv)  Let $f_1$ be a $\gamma_R$-function on $G_1$. Assume first that $f_1(x) = 2$. 
      Define an RD-function $g$ on $G$ as follows: 
      $g(u) = f_1(u)$ when $u \in V(G_1)$ and $g(u) = f(u)$ when 
      $u \in V(G_2-x)$, where $f$ is defined  as in (i). 
			The weight of $g$ is $\gamma_R(G_1) + (\gamma_R(G_2)+1) - 2= \gamma_R(G)$. 
      But $g(x) =2$ and $x \in V^{01}(G)$, a contradiction. 
      Thus $f_1(x) \not = 2$.
      Now by (i) we have  $x \in V^1(G_i) \cup V^{01}(G_i)$, $i=1,2$, 
			and  by (iii), $x \in V^{01}(G_j)$ for some $j \in \{1,2\}$.   
\end{proof}

\begin{proposition} \label{coales12}
Let $G = (G_1 \cdot G_2)(x)$, where $G_1$ and $G_2$ are connected graphs and $\{x\} = V^{01}(G_1)\cap V^{01}(G_2)$. 
\begin{itemize}
\item[(i)] If $f_i$ is a $\gamma_R$-function on $G_i$ with $f_i(x)=1$, $i=1,2$, 
           then the function $f: V(G) \rightarrow \{0,1,2\}$ with 
           $f|_{G_i} = f_i$, $i=1,2$,  is a $\gamma_R$-function  on $G$.    
\item[(ii)] $\gamma_R(G) = \gamma_R(G_1) + \gamma_R(G_2) -1$.
\item[(iii)]  Let $V_R =  \{V^0, V^1, V^2, V^{01}, V^{02}, V^{12}, V^{012}\}$. 
                     Then for any $A \in V_R$, $A(G_1) \cup A(G_2) = A(G)$. 
\end{itemize}
\end{proposition}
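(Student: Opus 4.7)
My plan is to prove (ii) and (i) jointly by establishing matching upper and lower bounds on $\gamma_R(G)$, and then to deduce (iii) by transferring $\gamma_R$-functions between $G$ and its halves $G_1,G_2$.

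For the upper bound, the glued function $f$ in (i) is an RD-function on $G$: for every $v\in V(G_i)\setminus\{x\}$ we have $N_G(v)\subseteq V(G_i)$, so the Roman-domination condition is inherited from $f_i$, while $x$ itself carries value $1$; the weight is $\gamma_R(G_1)+\gamma_R(G_2)-1$. For the lower bound, take any $\gamma_R$-function $h$ on $G$ and split by $h(x)\in\{0,1,2\}$. If $h(x)=1$ then each $h|_{G_i}$ is an RD-function on $G_i$, giving the bound directly. If $h(x)=2$ then each $h|_{G_i}$ is an RD-function on $G_i$ with value $2$ at $x$; but $x\in V^{01}(G_i)$ forbids any $\gamma_R$-function of $G_i$ from placing $2$ on $x$, so $h|_{G_i}(V(G_i))\geq\gamma_R(G_i)+1$, and the two weights sum to more than the upper bound, ruling the case out. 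If $h(x)=0$, by the same counting the two restrictions cannot both be RD-functions, so (say) $h|_{G_1}$ is an RD-function on $G_1$ while $h|_{G_2-x}$ is an RD-function on $G_2-x$; since $x\in V^{01}(G_2)$, Lemma \ref{minus} gives $\gamma_R(G_2-x)=\gamma_R(G_2)-1$, and the bound follows again. This establishes (ii), and then (i) because $f$ meets the bound; along the way one sees $x\in V^{01}(G)$.

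For (iii), writing $\Sigma_H(v):=\{f(v):f\text{ is a }\gamma_R\text{-function on }H\}$, I will show that $\Sigma_G(v)=\Sigma_{G_i}(v)$ whenever $v\in V(G_i)\setminus\{x\}$; combined with $x\in V^{01}(G)=V^{01}(G_1)\cap V^{01}(G_2)$, this immediately yields all seven class equalities. The inclusion $\Sigma_{G_i}(v)\subseteq \Sigma_G(v)$ is by extension: any $\gamma_R$-function $f_i$ on $G_i$ has $f_i(x)\in\{0,1\}$, and one glues it with a $\gamma_R$-function of $G_j$ taking $1$ at $x$ (when $f_i(x)=1$) or with a $\gamma_R$-function of $G_j-x$ (when $f_i(x)=0$), yielding a $\gamma_R$-function on $G$ agreeing with $f_i$ at $v$.

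The main obstacle is the reverse inclusion $\Sigma_G(v)\subseteq \Sigma_{G_i}(v)$ in the awkward subcase where $h$ is a $\gamma_R$-function on $G$ with $h(x)=0$ and $h|_{G_i}$ is not an RD-function on $G_i$; the (ii)-analysis shows that then $h|_{G_i-x}$ is a $\gamma_R$-function on $G_i-x$, and the plan is to repair it by setting $f_i'(w)=h(w)$ for $w\in V(G_i)\setminus\{x\}$ and $f_i'(x)=1$. Since $h(x)=0$ and $N_G(v)\subseteq V(G_i)$ for every $v\in V(G_i)\setminus\{x\}$, each $0$-vertex of $G_i$ retains a $2$-valued neighbor in $V(G_i)\setminus\{x\}$, so $f_i'$ is an RD-function on $G_i$ of weight $\gamma_R(G_i-x)+1=\gamma_R(G_i)$, i.e., a $\gamma_R$-function on $G_i$ realizing the value $h(v)$. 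With this conversion in hand, the remaining cases of the reverse inclusion are straightforward, and the seven class equalities in $V_R$ follow by bookkeeping on the $\Sigma$ sets.
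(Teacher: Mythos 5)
Your proposal is correct and follows essentially the same route as the paper: the upper bound by gluing, the lower bound by a case analysis on $h(x)$ using $x\in V^{01}(G_i)$ to exclude $h(x)=2$, and for (iii) the two inclusions between value-sets, with the same key repair step (raising the value at $x$ from $0$ to $1$ to turn a $\gamma_R$-function of $G_i-x$ into one of $G_i$ agreeing with $h$ off $x$). The only difference is presentational: you phrase the transfer via the sets $\Sigma_H(v)$, whereas the paper states it as the claim (*) and delegates part of the $h(x)=0$ analysis to its Proposition on coalescence via a $V^{01}$-vertex.
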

\begin{proof}[\bf Proof.]
(i) and (ii): Note that $f$ is an RD-function on $G$ and 
$\gamma_R(G) \leq f(V(G)) = f_1(V(G_1)) + f_2(V(G_2)) - f(x) = \gamma_R(G_1) + \gamma_R(G_2) -1$.
Now let $h$ be any $\gamma_R$-function on $G$. 

{\it Case} 1: $h(x) \geq 1$. 
              Then $h|_{G_i}$ is an RD-function on $G_i$, $i=1,2$. 
              If $h(x) = 2$ then $h|_{G_i}$ is no  $\gamma_R$-function on $G_i$, $i=1,2$. 
              Hence $\gamma_R(G) \geq (\gamma_R(G_1) + 1) + (\gamma_R(G_2) + 1) - h(x) = 
              \gamma_R(G_1) + \gamma_R(G_2)$, a contradiction. 
              If $h(x)=1$ then $\gamma_R(G) = h(V(G)) = h(V(G_1)) + h(V(G_2)) - h(x) \geq \gamma_R(G_1) + \gamma_R(G_2) -1$. 
              Thus $h(x) = 1$, $\gamma_R(G) = \gamma_R(G_1) + \gamma_R(G_2) -1$ and 
              $f$ is a $\gamma_R$-function on $G$.
              
{\it Case} 2: $h(x)=0$.
              Then at least one of $h|_{G_1}$ and $h|_{G_2}$ is an RD-function, say the first. 
              If $h|_{G_2}$ is an RD-function on $G_2$ then $h(V(G)) \geq \gamma_R(G_1) + \gamma_R(G_2)$, 
              a contradiction. Hence $h|_{G_2-x}$ is a $\gamma_R$-function on $G_2-x$. 
              But then $\gamma_R(G) = h(V(G)) \geq \gamma_R(G_1) + \gamma_R(G_2-x) \geq 
              \gamma_R(G_1) + \gamma_R(G_2) - 1 \geq \gamma_R(G)$. 
							
							Thus, (i) and (ii) hold. 
\medskip
															
(iii): Let $g_1$ be a $\gamma_R$-function on $G_1$ with $g_1(x)=0$, and $g_2$ a 
       $\gamma_R$-function on $G_2-x$. Then the RD-function $g$ on $G$ for which 
       $g|_{G_1} = g_1$ and $g|_{G_2-x} = g_2$ has weight 
       $g_1(V(G_1)) + g_2(V(G_2-x)) = \gamma_R(G_1) + \gamma_R(G_2-x)  =
        \gamma_R(G_1) + \gamma_R(G_2)-1 = \gamma_R(G)$.  
        Hence by (i), $x \in V^{01}(G) \cup V^{012}(G)$. 
				However, by Case 1 it follows that $h(x) \not = 2$ for any  $\gamma_R$-function $h$ on $G$. 
					Thus $x \in V^{01}(G) $. 
				
				Let $y \in V(G_1-x)$, $l_1$ a $\gamma_R$-function on $G_1$,  and  $h$ a  $\gamma_R$-function on $G$. 
				We shall prove that the following holds.
				\begin{enumerate}
				\item[(*)]  There are a $\gamma_R$-function $l$ on $G$,  and  
				                       a  $\gamma_R$-function $h_1$ on $G_1$ such that $l(y) = l_1(y)$ and $h_1(y) = h(y)$.
				\end{enumerate}
				
				Define an RD-function $l$ on $G$ as $l|_{G_1} = l_1$ and $l|_{G_2-x} = l_2$, where
				$l_2$ is a $\gamma_R$-function on $G_2-x$.  Since   $l(V(G)) = \gamma_R(G_1) + \gamma_R(G_2-x)  = \gamma_R(G)$, 
				$l$ is a $\gamma_R$-function on $G$ and $l(y) = l_1(y)$.

			Assume now that  there is no $\gamma_R$-function $h_1$ on $G_1$ with $h_1(y) = h(y)$. 
			By Proposition \ref{coal01},  $h|_{G_1-x}$ is a $\gamma_R$-function on $G_1 - x$. 
			But then the function $h^\prime: V(G_1) \rightarrow \{0,1,2\}$ defined as 
			$h^\prime(u) = 1$ when $u=x$ 	 and 		$h^\prime (u) = h|_{G_1}(u)$ otherwise, 
			is a  $\gamma_R$-function on $G_1$ with $h^\prime (y) = h|_{G_1}(y)$, a contradiction. 
			
			By (*) and since $x \in V^{01}(G)$, $A(G_1)  =  A(G) \cap V(G_1)$ for any $A \in V_R$. 
			By symmetry,  $A(G_2)  =  A(G) \cap V(G_2)$.
      Therefore  $A(G_1) \cup A(G_2) = A(G)$  for any $A \in V_R$.
		\end{proof}

     \begin{lemma} \label{01012}
		Let $G = (G_1 \cdot G_2)(x)$, where $G_1$ and $G_2$ are connected graphs and $\{x\} = V^{012}(G_1)\cap V^{01}(G_2)$. 
		Then $\gamma_R(G) = \gamma_R(G_1) + \gamma_R(G_2) -1$ and $x \in V^{012}(G)$. 
		\end{lemma}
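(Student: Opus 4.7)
The strategy is to prove the weight formula first by a two-sided estimate, and then to exhibit three $\gamma_R$-functions on $G$ taking values $0$, $1$, $2$ at $x$ respectively, thereby establishing $x \in V^{012}(G)$.

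For the upper bound $\gamma_R(G) \le \gamma_R(G_1) + \gamma_R(G_2) - 1$, I take a $\gamma_R$-function $f_i$ on $G_i$ with $f_i(x)=1$ (which exists by $x\in V^{012}(G_1)$ and $x\in V^{01}(G_2)$), and glue them along $x$, subtracting the double-counted $1$.

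For the lower bound, let $h$ be any $\gamma_R$-function on $G$ and split by $h(x)$. The cases $h(x)=1$ and $h(x)=0$ go through as in Proposition~\ref{coales12}, using $\gamma_R(G_i - x) = \gamma_R(G_i) - 1$, which follows from $x \in V^-(G_i)$ (available because both $V^{012}$ and $V^{01}$ are contained in $V^-$, by Lemma~\ref{minus}). The delicate case is $h(x) = 2$. Here both $h|_{G_1}$ and $h|_{G_2}$ are RD-functions on their respective graphs, and $h(V(G)) = h|_{G_1}(V(G_1)) + h|_{G_2}(V(G_2)) - 2$. The key observation is that $x \in V^{01}(G_2)$ forbids the value $2$ at $x$ in any $\gamma_R$-function on $G_2$, so $h|_{G_2}$ cannot be a $\gamma_R$-function on $G_2$, hence $h|_{G_2}(V(G_2)) \ge \gamma_R(G_2) + 1$. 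Combining with $h|_{G_1}(V(G_1)) \ge \gamma_R(G_1)$ gives $h(V(G)) \ge \gamma_R(G_1) + \gamma_R(G_2) - 1$, as required.

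To show $x \in V^{012}(G)$, I construct three $\gamma_R$-functions on $G$ as follows. For the value $1$, the function built in the upper-bound step works. For the value $0$, take a $\gamma_R$-function $g_1$ on $G_1$ with $g_1(x)=0$ (using $x\in V^{012}(G_1)$) and a $\gamma_R$-function $g_2'$ on $G_2 - x$; pasting them yields an RD-function on $G$ (the $0$-labeled $x$ is dominated by a neighbor of value $2$ in $G_1$ as supplied by $g_1$) of weight $\gamma_R(G_1) + \gamma_R(G_2) - 1 = \gamma_R(G)$. For the value $2$, take a $\gamma_R$-function $l_1$ on $G_1$ with $l_1(x)=2$ (again using $x\in V^{012}(G_1)$) and a $\gamma_R$-function $l_2'$ on $G_2 - x$, and paste; the weight is again $\gamma_R(G)$.

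The main obstacle, as is typical for these coalescence lemmas, is the $h(x)=2$ case in the lower bound, because naively one only gets $h(V(G))\ge \gamma_R(G_1)+\gamma_R(G_2)-2$; the crucial use of $x\in V^{01}(G_2)$ to upgrade the bound on $h|_{G_2}(V(G_2))$ by $+1$ is what makes the equality hold. Everything else reduces to careful bookkeeping on weights of pasted RD-functions.
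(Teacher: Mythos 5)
Your proof is correct and follows essentially the same route as the paper: the same glued upper bound, the same three-way case split on $h(x)$ with the key use of $x\in V^{01}(G_2)$ to force $h|_{G_2}(V(G_2))\ge\gamma_R(G_2)+1$ when $h(x)=2$, and the same pasting constructions for the membership $x\in V^{012}(G)$. If anything, you are slightly more explicit than the paper, which leaves the three witnessing $\gamma_R$-functions for $x\in V^{012}(G)$ implicit.
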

   \begin{proof}[\bf Proof.]
	Let $f_i$ be a $\gamma_R$-function on $G_i$ with $f_i(x) = 1$, $i = 1,2$. 
	Then the function $f$ defined as $f|_{G_i} = f_i$ is an RD-function on $G_i$, $i = 1,2$.
	Hence $\gamma_R(G) \leq f(V(G)) = \gamma_R(G_1) + \gamma_R(G_2) -1$. 
	Let now $h$ be any  $\gamma_R$-function on $G$. 

	{\bf Case 1}:  $h(x) = 2$. 
	Then $h|_{G_1}$ is a $\gamma_R$-function on $G_1$ and $h|_{G_2}$ is an RD-function on $G_2$ 
	of weight more than $\gamma_R(G_2)$. Hence
	 $\gamma_R(G) =  h(V(G)) \geq \gamma_R(G_1) + (\gamma_R(G_2) + 1) - h(u)$. 
	 Thus $\gamma_R(G) = \gamma_R(G_1) + \gamma_R(G_2) -1$. 
	\medskip
	
	{\bf Case 2}:  $h(x) = 1$. 
			Then obviously $h|_{G_1}$ and $h|_{G_2}$ are $\gamma_R$-functions. 
			Hence  $\gamma_R(G) = \gamma_R(G_1) + \gamma_R(G_2) -1$. 
								
	{\bf Case 3}:  $h(x) = 0$. 
		Hence at least one of $h|_{G_1}$ and $h|_{G_2}$ is a $\gamma_R$-function. 
		If both  $h|_{G_1}$ and $h|_{G_2}$ are $\gamma_R$-functions, then 
		$\gamma_R (G) = \gamma_R(G_1) + \gamma_R(G_2)$, a contradiction. 
		Hence either $h|_{G_1}$ and $h|_{G_2-x}$ are $\gamma_R$-functions,  
		or $h|_{G_1-x}$ and $h|_{G_2}$ are $\gamma_R$-functions.  
   Since  $\{x\} = V^{012}(G_1)\cap V^{01}(G_2)$,  in both cases we have 
	$\gamma_R(G) = \gamma_R(G_1) + \gamma_R(G_2) -1$. 
	
Thus, $\gamma_R(G) = \gamma_R(G_1) + \gamma_R(G_2) -1$ and 	 $x \in V^{012}(G)$. 
	\end{proof}

\subsection{\bf Three lemmas for trees}
\begin{lemma}\label{12}
Let $T$ be a $\gamma_R$-excellent tree of order at least $2$.  
Then $V(T)  = V^{01}(T) \cup V^{012}(T) \cup V^{02}(T)$.
\end{lemma}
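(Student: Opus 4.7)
The plan is to show separately that $V^1(T) = \emptyset$, that no leaf lies in $V^2(T) \cup V^{12}(T)$, and that no non-leaf does either; combined with $V^0(T) = \emptyset$ (from excellence) this yields $V(T) = V^{01}(T) \cup V^{02}(T) \cup V^{012}(T)$.

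First, for $V^1(T) = \emptyset$ I apply a swap argument. Suppose $v \in V^1(T)$ and pick any neighbor $u$ (guaranteed by $|V(T)| \ge 2$). In every $\gamma_R$-function $f$ one has $f(v) = 1$, and Lemma \ref{on} forces $f(u) \ne 2$. If $f(u) = 1$, the reassignment $f(v) := 2$, $f(u) := 0$ preserves the weight and the RD-property (the new $0$ on $u$ is dominated by the new $2$ on $v$; no other RD-condition is affected), contradicting $v \in V^1$. Hence $f(u) = 0$ in every $\gamma_R$-function, so $u \in V^0(T) = \emptyset$, a contradiction.

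Next, I rule out leaves from $V^2(T) \cup V^{12}(T)$. For a leaf $\ell$ with stem $s$, the ``$\ell$ never labelled $0$'' hypothesis combined with Lemma \ref{on} gives: whenever $f(\ell) = 1$ we have $f(s) \in \{0,1\}$, and the swap $\ell := 0$, $s := 2$ available when $f(s) = 1$ contradicts the never-$0$ assumption; whenever $f(\ell) = 2$ we have $f(s) \in \{0,2\}$, and if $f(s) = 2$ then lowering $f(\ell) := 0$ produces an RD-function of weight $\gamma_R(T) - 2$, contradicting minimality. Consequently $f(s) = 0$ in every $\gamma_R$-function, so $s \in V^0(T) = \emptyset$, again a contradiction.

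The main task is then to rule out a non-leaf $v \in V^2(T) \cup V^{12}(T)$. Fix such $v$ and a $\gamma_R$-function $f$ with $f(v) = 2$. A key preliminary is that $v$ has at least two private $0$-neighbors in $f$: zero would let us drop $f(v)$ from $2$ to $1$ (contradicting minimality), and exactly one private $0$-neighbor $u$ would allow the swap $f(v):=0$, $f(u):=2$ (weight and RD preserved), yielding a $\gamma_R$-function with $v$-value $0$ and contradicting the never-$0$ hypothesis. Lemma \ref{on} then places each private $0$-neighbor $p$ of $v$ in $V^{02}$ when $v \in V^2$, and in $V^{01} \cup V^{02} \cup V^{012}$ in the $V^{12}$ sub-case. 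Choosing $g$ with $g(p) = 2$, Lemma \ref{on} applied across the edge $vp$ together with the never-$0$ hypothesis forces $g(v) = 2$; the minimality argument then applied to $p \in V_2^g$ produces a private $0$-neighbor $p'$ of $p$ in $g$ with $p' \ne v$, extending the construction along an edge of $T$ away from $v$. Iterating yields a sequence $v, p, p', \ldots$ of pairwise-adjacent vertices; by acyclicity and finiteness of $T$ this chain is a simple path that must terminate at a leaf, which the previous paragraph forbids from lying in $V^1 \cup V^2 \cup V^{12}$, giving the desired contradiction.

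The principal technical obstacle is making this chain-termination argument rigorous. In particular, one must carefully choose the $\gamma_R$-functions $f, g, \ldots$ along the chain so that every successor $p^{(i+1)}$ lies in a class (typically $V^{02}$) that supports the next iteration, that the ``never-$0$'' property propagates far enough down the chain to place the terminal leaf into $V^1 \cup V^2 \cup V^{12}$, and that the chain cannot stagnate before reaching a leaf. A careful bookkeeping using Lemma \ref{on} together with Lemma \ref{minus} to control the interaction between neighboring private $0$-neighbors should close the argument.
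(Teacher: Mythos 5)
Your two preliminary reductions are sound: the swap argument showing $V^1(T)=\emptyset$ is essentially the paper's own, and the leaf case is handled correctly. The genuine gap is in the core case of a non-leaf $v\in V^2(T)\cup V^{12}(T)$, which you explicitly leave unfinished, and the chain argument you sketch for it does not close. The first step works: for a private $0$-neighbour $p\in V^{02}(T)$ of $v$ and a $\gamma_R$-function $g$ with $g(p)=2$, Lemma \ref{on} and the never-$0$ hypothesis at $v$ force $g(v)=2$, and $p$ then acquires a private $0$-neighbour $p'\neq v$. But the iteration stalls there: the implication ``my neighbour is labelled $2$, hence so am I'' relied on $v$ being never-$0$, whereas the subsequent vertices $p,p',\dots$ lie in $V^{02}(T)$ (or weaker classes) and \emph{can} legitimately take the value $0$. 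Concretely, a $\gamma_R$-function $g'$ with $g'(p')=2$ may have $g'(p)=0$, in which case $p$ need not have any private $0$-neighbour under $g'$ and the walk does not advance; nothing forces it to reach a leaf, let alone a leaf in $V^1\cup V^2\cup V^{12}$. (In the $V^{12}$ sub-case there is the further unaddressed possibility $p\in V^{01}(T)$, where no $g$ with $g(p)=2$ exists at all.) Since you defer exactly this ``careful bookkeeping'' to future work, the proof is incomplete, not merely terse.

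The paper avoids any chain by a one-shot extremal argument you could adopt: among all $\gamma_R$-functions $h$ with $h(x)=2$, choose one minimizing $k=|epn[x,V_2^h]|$ (your private-neighbour count shows $k\geq 2$, and Lemma \ref{on} guarantees these private neighbours are labelled $0$). Excellence at one external private neighbour $y_k$ gives a $\gamma_R$-function $f_k$ with $f_k(y_k)\neq 0$; since $x$ is never $0$, Lemma \ref{on} rules out $f_k(y_k)=1$ (that case forces $f_k(x)=1$ and a weight-preserving swap puts $0$ on $x$), so $f_k(y_k)=f_k(x)=2$. Splicing $f_k$ restricted to the component of $T-x$ containing $y_k$ into $h$ yields a $\gamma_R$-function assigning $2$ to $x$ with strictly fewer external private neighbours, contradicting the minimality of $k$. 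This single exchange replaces the entire chain you were trying to build.
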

\begin{proof}[\bf Proof.]
Let $x \in V(T)$, $y \in N(x)$ and $f$ a $\gamma_R$-function on $T$. 
Suppose $x \in V^1(T)$.  If $f(y) = 1$,  then the RD-function $g$ on $T$ defined as 
$g(x) = 2$, $g(y) = 0$ and $g(u) = f(u)$ for all $u \in V(T) - \{x,y\}$ 
 is a  $\gamma_R$-function on $T$, a contradiction. 
But then $N(x) \subseteq V^0(T)$,  which is impossible.

Suppose now $x \in V^2(T) \cup V^{12}(T)$. Hence $x$ is not a leaf. 
Choose a $\gamma_R$-function $h$ on $T$  such that (a) $h(x) = 2$, and  
(b) $k = |epn[x, V_2^h]|$ to be as small as possible. 
Let  $epn[x, V_2^h] = \{y_1,..,y_k\}$ and denote by $T_i$
 the connected component of $T-x$, which contains $y_i$. 
Hence $h(y_i) =0$ for all $i \leq k$. 
Since $T$ is $\gamma_R$-excellent, there is a $\gamma_R$-function $f_k$ on $T$ with 
$f_k(y_k) \not=0$.  Since $x \in V^2(T) \cup V^{12}(T)$, $f_k(x) \not= 0$. 
If $f_k(y_k) =1$ then $f_k(x) = 1$, which easily implies $x \in V^{012}(T)$, a contradiction. 
Hence $f_k(y_k) = f_k(x) = 2$.  Define a $\gamma_R$-function $l$ on $T$ as 
$l|_{T_k} = f_k|_{T_k}$ and $l(u) = h(u)$ for all $u \in V(T) - V(T_k)$. 
But  $|epn[x, V_2^l]| <k$, a contradiction with the choice of $h$.  
Thus $V^1(T) \cup V^2(T) \cup V^{12}(T)$ is empty, and the required follows. 
\end{proof}

Lemma \ref{12} will be used without specific references.

\begin{lemma}\label{no3}
Let $T$ be a tree and $V^-(T)$ is not empty.   
Then each component of $\left\langle V^-(T)\right\rangle$ is either $K_1$ or $K_2$.  
\end{lemma}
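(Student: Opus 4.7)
My plan is to argue by contradiction, constructing a $\gamma_R$-function $F$ on $T$ that simultaneously assigns the value $1$ to three vertices lying on a path of length $2$; this will immediately contradict Lemma \ref{on}. Suppose $\langle V^-(T)\rangle$ has a component of order at least $3$. Since it is an induced subgraph of the tree $T$, it is itself a subtree, and any subtree of order at least $3$ contains an induced $P_3$, so we obtain vertices $u,v,w \in V^-(T)$ with $u-v-w$ a path in $T$.

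The key intermediate step I would establish first is a localization lemma: if $v\in V^-(T)$ and $x\in N(v)\cap V^-(T)$, and $T_x$ denotes the component of $T-v$ that contains $x$, then there is a $\gamma_R$-function on $T_x$ that assigns $x$ the value $1$. To prove it I would pick a $\gamma_R$-function $f$ on $T$ with $f(x)=1$ (which exists by Lemma \ref{minus}); Lemma \ref{on} forces $f(v)\in\{0,1\}$. If $f(v)=1$, then $f|_{T-v}$ is RD on $T-v$ of weight $\gamma_R(T)-1=\gamma_R(T-v)$, hence is a $\gamma_R$-function on $T-v$, and restricting to the component $T_x$ gives a $\gamma_R$-function on $T_x$ with $f(x)=1$. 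If $f(v)=0$, the $2$-neighbour of $v$ in $f$ lies outside $T_x$, since $v$'s only edge into $T_x$ is $vx$ and $f(x)=1$; thus $f|_{T_x}$ is already an RD-function on $T_x$, and if it failed to be of minimum weight, replacing $f|_{T_x}$ by an actual $\gamma_R$-function on $T_x$ would produce an RD-function on $T$ of weight less than $\gamma_R(T)$, contradicting the optimality of $f$.

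Having established this lemma I would apply it to both $u$ and $w$ to obtain $\gamma_R$-functions $h_u$ on $T_u$ and $h_w$ on $T_w$ with $h_u(u)=h_w(w)=1$. I then stitch $h_u$, $h_w$, and arbitrary $\gamma_R$-functions on the remaining components of $T-v$, together with $F(v)=1$, into a function $F$ on $T$. Its weight equals $\gamma_R(T-v)+1=\gamma_R(T)$ by Lemma \ref{minus}, and $F$ is RD on $T$ since each component of $T-v$ is already properly dominated by the corresponding restriction of $F$ and $v$ self-dominates with value $1$. Hence $F$ is a $\gamma_R$-function on $T$ with $F(u)=F(v)=F(w)=1$, so the path $u-v-w$ lies inside $\langle V_1^F\rangle$ as a component of order at least $3$, contradicting Lemma \ref{on}.

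The step I expect to be the main obstacle is the $f(v)=0$ case of the localization lemma: one must use the tree structure crucially to ensure that any ``extra'' weight $f$ spends in order to dominate $v$ is paid for in components of $T-v$ other than $T_x$, so that $f|_{T_x}$ remains optimal on $T_x$. The tree hypothesis is essential here because $v$ is then a cut vertex whose only connection into the branch $T_x$ is the single edge $vx$, which cleanly decouples the optimality questions on $T_x$ and on $T-T_x$; once this decoupling is available, the rest of the argument is a routine gluing construction.
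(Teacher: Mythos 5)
Your proof is correct and follows essentially the same strategy as the paper's: both derive a contradiction with Lemma \ref{on} by cutting the tree around the middle vertex of a path $u,v,w$ in $\left\langle V^-(T)\right\rangle$, checking that the relevant restrictions of $\gamma_R$-functions remain optimal on the pieces, and regluing them into a $\gamma_R$-function equal to $1$ on all three vertices. The only cosmetic difference is that you delete $v$ and use $\gamma_R(T-v)=\gamma_R(T)-1$ to reinsert it with value $1$, whereas the paper splits along the two edges at $x_2$ and keeps $f_2$ on the middle piece; your ``localization lemma'' just spells out the optimality-of-restrictions step that the paper asserts without detail.
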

\begin{proof}[\bf Proof.]
Assume that $P: x_1,x_2,x_3$ is a path in $T$ and $x_1,x_2,x_3 \in  V^-(T)$. 
Then there is a $\gamma_R$-function $f_i$ on $T$ with $f_i(x_i) = 1$, $i = 1,2,3$. 
Denote by $T_j$ the connected component of $T-x_2x_j$ that contains $x_j$, $j=1,3$.  
Then $f_2|_{T_j}$ and  $f_j|_{T_j}$ are $\gamma_R$-functions on $T_j$, $j=1,3$.  
Now define a $\gamma_R$-function $h$ on $T$ such that 
$h|_{T_j} = f_j|_{T_j}$, $j=1,3$, and $h(u) = f_2(u)$ when $u \in V(T) - (V(T_1) \cup V(T_3))$. 
But $h(x_1) = h(x_2) = h(x_3) = 1$, a contradiction.
\end{proof}

\begin{lemma}\label{adj}
Let $T$ be a $\gamma_R$-excellent tree of order at least $2$.  
\begin{itemize}
\item[(i)] If $x \in V^{012}(T)$, then $x$ is adjacent to exactly one 
							  	vertex in $V^-(T)$, say $y_1$, and $y_1 \in V^{012}(T)$. 
\item[(ii)] Let $x \in V^{02}(T)$.  If $deg(x) \geq 3$ then $x$ has exactly $2$ neighbors in $V^-(T)$.
						     If $deg(x) = 2$ then  either $N_T(x) \subseteq V^{012}(T)$ or 
								there is a path $u,x,y,z$ in $T$ such that $u,z \in V^{01}(T)$,  $y \in V^{02}(T)$ and $deg(y) = 2$.
	\item[(iii)]  $V^{01}(T)$ is either empty or independent. 
\end{itemize}
\end{lemma}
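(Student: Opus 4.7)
The three parts differ in flavor: (iii) is a clean independence statement, while (i) and (ii) constrain the local structure around $V^{012}$- and $V^{02}$-vertices. My plan is to prove (iii) first, since the short ``swap'' device it establishes is reused throughout (i) and (ii).

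For (iii), argue by contradiction: suppose $xy \in E(T)$ with $x, y \in V^{01}(T)$. The driver is the \emph{swap lemma}: from any $\gamma_R$-function $h$ satisfying $h(x) = h(y) = 1$, the function $h'$ obtained by setting $h'(x) = 0$, $h'(y) = 2$, and $h' = h$ elsewhere is another $\gamma_R$-function with $h'(y) = 2$, contradicting $y \in V^{01}$. (Weight is preserved; $x$ is dominated by $y$; since $h(x) = 1$ was not a $2$-dominator, no other domination is broken.) It therefore suffices to exhibit such an $h$. Take a $\gamma_R$-function $f$ with $f(x) = 1$; Lemma~\ref{on} gives $f(y) \neq 2$, and $y \in V^{01}$ gives $f(y) \in \{0,1\}$. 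If $f(y) = 1$, take $h = f$. Otherwise pick $g$ with $g(y) = 1$; symmetrically $g(x) \in \{0,1\}$, and if $g(x) = 1$ take $h = g$. The remaining ``stitch'' case $f(y) = g(x) = 0$ is handled as follows: let $T_x, T_y$ be the components of $T - xy$, set $h|_{T_x} = f|_{T_x}$ and $h|_{T_y} = g|_{T_y}$, and observe both $h$ and the symmetric $h''|_{T_x} = g|_{T_x}$, $h''|_{T_y} = f|_{T_y}$ are RD-functions on $T$. The identity $h(V) + h''(V) = f(V) + g(V) = 2\gamma_R(T)$ combined with $h(V), h''(V) \geq \gamma_R(T)$ forces $h(V) = \gamma_R(T)$, so $h$ is the desired $\gamma_R$-function.

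For (i), part (b)---at most one $V^-$-neighbor of $x$---is immediate: since $x \in V^{012} \subseteq V^-$, any two $V^-$-neighbors of $x$ would create a $P_3$ in $\langle V^-(T)\rangle$, contradicting Lemma~\ref{no3}. For part (a), take a $\gamma_R$-function $f_1$ with $f_1(x) = 1$, whose $V_1^{f_1}$-component at $x$ has order at most $2$ by Lemma~\ref{on}. The first (and key) subclaim is that $x$ has a $1$-valued neighbor $y_1$ in some $\gamma_R$-function: if not, comparison with $f_2$ (having $f_2(x) = 2$) via its external private neighborhood $epn(x, V_2^{f_2})$---which must be non-empty, else reducing $f_2(x)$ to $1$ lowers the weight---yields a reducibility contradiction. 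Once $y_1 \in N(x)$ with $f_1(y_1) = 1$ is found, $y_1 \in V^-$, and by (b) it is the unique $V^-$-neighbor of $x$. Applying the swap lemma from (iii) to $f_1$ at $\{x, y_1\}$ then produces a $\gamma_R$-function with $y_1 = 2$, so $y_1 \in V^{012}$.

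Part (ii) follows the same template, starting from $f_2$ with $f_2(x) = 2$ (since $x \in V^{02}$ cannot take value $1$). Non-emptiness of $epn(x, V_2^{f_2})$ follows from optimality of $f_2$; when $\deg(x) \geq 3$, a double-reduction argument forces $|epn(x, V_2^{f_2})| \geq 2$, and Lemma~\ref{no3} together with swap-type manipulations pin the count to exactly two $V^-$-neighbors in $V^{012}$. When $\deg(x) = 2$, either both neighbors lie in $V^{012}$, or examination of a $\gamma_R$-function $g$ with $g(x) = 0$ along the induced subpath through $x$ forces the claimed structure $u,x,y,z$ with $u,z \in V^{01}(T)$, $y \in V^{02}(T)$, $\deg(y)=2$. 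The principal obstacle is the $1$-neighbor subclaim inside (i) and its companion in (ii): nothing from private-neighborhood structure alone precludes $y_1 \in V^{01}$, and producing a $\gamma_R$-function attaining $y_1 = 2$ requires combining the swap lemma from (iii) with careful bookkeeping on $V_2^{f_2}$ and its private neighbors.
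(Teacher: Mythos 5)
Your part (iii) is correct and complete: the stitch-then-swap argument (split $T$ at the edge $u_1u_2$, combine restrictions, use the weight identity to certify optimality, then trade the two $1$'s for a $2$ and a $0$) is essentially the paper's own proof, packaged with a slightly slicker averaging step. Likewise your observation that $x\in V^{012}(T)\subseteq V^-(T)$ has at most one neighbour in $V^-(T)$ by Lemma~\ref{no3} is valid.

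Parts (i) and (ii), however, are plans rather than proofs, and the missing piece is exactly the engine the paper runs on. For (i), your ``key subclaim'' (some $\gamma_R$-function assigns $1$ to a neighbour of $x$) is asserted to follow from a ``reducibility contradiction'' that is never exhibited; note that when $f_1(x)=1$ Lemma~\ref{on} only tells you the $V_1$-component at $x$ has order $\le 2$, and in the order-$1$ case all neighbours of $x$ get value $0$ with $2$-dominators elsewhere, so no reduction is available and you are left with nothing. The paper's device is to take $h$ with $h(x)=2$ \emph{minimizing} $k=|epn[x,V_2^h]|$, prove $\gamma_R(T_i)=\gamma_R(T_i-y_i)+1$ for each private neighbour $y_i$, and deduce the identity $\gamma_R(T)=2-k+\gamma_R(T-x)$; this forces $k=1$ when $x\in V^{012}(T)$ and $k=2$ when $x\in V^{02}(T)$, and the explicit reassignments on $\{x\}\cup epn[x,V_2^h]$ then produce the witnessing functions. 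Without that identity you have neither the existence of a $V^-$-neighbour in (i) nor the count in (ii). In (ii) there is a further error: you invoke Lemma~\ref{no3} to cap the number of $V^-$-neighbours at two, but $x\in V^{02}(T)$ is \emph{not} in $V^-(T)$, so its $V^-$-neighbours need not lie in a common component of $\left\langle V^-(T)\right\rangle$ and the lemma gives no bound; the upper bound must come from showing $y_j\in V^{02}(T)$ for every non-private neighbour $y_j$. Finally, the $deg(x)=2$ alternative (the path $u,x,y,z$ with $u,z\in V^{01}(T)$, $y\in V^{02}(T)$, $deg(y)=2$) is the longest case analysis in the paper's proof and is left entirely unargued in your sketch.
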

\begin{proof}[\bf Proof.]
  Let $x \in V^{012}(T) \cup V^{02}(T)$ and  $N(x)  = \{y_1,..,y_r\}$.  
	If $x$ is a leaf, then clearly $x,y_1 \in V^{012}(T)$. So, let  $r \geq 2$. 
								Denote by $T_i$ the connected component of $T-x$ which contains $y_i$, $i \geq 1$. 
								Choose a $\gamma_R$-function $h$ on $T$ such that (a) $h(x) = 2$, and  
								(b) $k = |epn[x, V_2^h]|$ to be as small as possible. Let without loss of generality 
								$epn[x, V_2^h] = \{y_1,..,y_k\}$. By the definition of $h$  it immediately follows that  
								(c) $h|_{T_j}$ is a $\gamma_R$-function on $T_j$ for all $j  \geq k+1$, 
								(d) for each $i \in \{1,..,k\}$, $h|_{T_i}$ is no RD-function on $T_i$, and 
								(e) $h|_{T_i - y_i}$ is a $\gamma_R$-function on $T_i-y_i$, $i \in \{1,..,k\}$. 
								Hence $\gamma_R(T_i) \leq \gamma_R(T_i-y_i) + 1$ for all $i \in \{1,..,k\}$. 
								Assume that  the equality does not hold for some $i \leq k$.  
								Define  an RD-function  $h_i$ on $T$  as follows: 
								$h_i(u) = h(u)$ when $u \in V(T) - V(T_i)$ and $h_i|_{T_i} = h^{\prime}_i$, 
								where $h^{\prime}_i$ is some $\gamma_R$-function on $T_i$. 
								But then either $h_i$ has weight less than $\gamma_R(T)$ or 
								$h_i$ is a $\gamma_R$-function on $T$ with $epn[x, V_2^{h_i}] = epn[x, V_2^{h}]- \{y_i\}$. 
								In both cases we have a contradiction.  
								Thus $\gamma_R(T_i) = \gamma_R(T_i-y_i) + 1$ for all $i \in \{1,..,k\}$. 
								Therefore  $\gamma_R(T) = h(V(T)) = 2 + \Sigma_{i=1}^k( \gamma_R(T_i) - 1) + 
								 \Sigma_{j=k+1}^r \gamma_R(T_j) = 2-k + \Sigma_{i=1}^r \gamma_R(T_i) = 2-k + \gamma_R(T-x)$. 
									Thus  $\gamma_R(T) = 2-k + \gamma_R(T-x)$. 
								\medskip
								
(i)       		Since  $\gamma_R(T-x) + 1 = \gamma_R(T)$, $k=1$.  
							We already know that $h|_{T_j}$ is a $\gamma_R$-function on $T_j$, $j \geq 2$. 
							Assume that $y_j \in V^{012}(T) \cup V^{01}(T)$ for some $j \geq 2$. 
							Then there is a $\gamma_R$-function $l$ on $T$ with $l(y_j) = 1$. 
							Clearly $l|_{T_j}$ is a $\gamma_R$-function  on $T_j$. 
							Now define a $\gamma_R$-function $h^{\prime\prime}$ on $T$ as follows: 
							$h^{\prime\prime}(u) = h(u)$ when $u \in V(T) - V(T_j)$ and $h^{\prime\prime}|_{T_j}  = l|_{T_j}$. 
							But then $h^{\prime\prime}(x) = 2, h^{\prime\prime}(y_j) = 1$ and $xy_j \in E(G)$, which is impossible. 
							Thus, $y_2,..,y_r \in V^{02}(T)$.							
							Define now  $\gamma_R$-functions $h_1$ and $h_2$ on $T$ as follows:  
							$h_1(u) = h_2(u) = h(u)$ for all $u \in V(T) - \{x,y_1\}$,   $h_1 (x) = h_1(y_1) = 1$, 
							$h_2(x) = 0$ and $h_2(y_1) = 2$. 	Thus $y_1 \in V^{012}(T)$.							
\medskip
								
	(ii) 					Since  $\gamma_R(T-x) = \gamma_R(T)$, $k=2$.  
									Recall that $h|_{T_j}$ is a $\gamma_R$-function on $T_j$, $j \geq 3$, and 
									$\gamma_R(T_i-y_i) = \gamma_R(T_i) -1$ for $i = 1,2$.  
										Hence there is  a $\gamma_R$-function $f_i$ on $T_i$ with $f_i(y_i) = 1$, $i = 1,2$. 
									
									Suppose first that $r \geq 3$. As in the proof of (i), we obtain $y_3,..,y_r \in V^{02}(T)$. 
							Hence there is a $\gamma_R$-function $g$ on $T$ such that $g(y_3) = 2$. 
									By the choice of $h$, $g(x) = 0$. 
									Then $g|_{T_i}$ is a $\gamma_R$-function on $T_i$, $i=1,2$. 
									Define now a $\gamma_R$-function $g^\prime$ on $T$ as $g^\prime|_{T_i} = f_i$, $i=1,2$, and 
									$g^\prime(u) = g(u)$ when $u \in V(T) - (V(T_1) \cup V(T_2))$. 
									Since $g^\prime(y_1) = g^\prime(y_2) = 1$, $y_1, y_2 \in V^-(T)$. 
																		
								  So, let $r =2$	and  let $f$ be a $\gamma_R$-function on $T$ with $f(x) = 0$.
									Then there is $y_s$ such that $f(y_s) = 2$, say $s=2$. 
									Hence $y_2 \in V^{02}(T) \cup V^{012}(T)$ and $f|_{T_1}$ is a $\gamma_R$-function on $T_1$. 
									Define the $\gamma_R$-function $l$ on $T$ as $l|_{T_1} = f_1$ and 
									$l(u) = f(u)$ when $u \in V(T) - V(T_1)$. Since $l(y_1) = 1$, $y_1 \in V^{01}(T) \cup V^{012}(T)$. 
									
									Assume first that $y_1 \in V^{012}(T)$.  Then there is a $\gamma_R$-function $f^\prime$ on $T$ 
									with $f^\prime(y_1) = 2$. Since $x \in V^{02}(T)$ and $deg(x) =2$, $f^\prime(x) = 0$. 
									Hence $f^\prime|_{T_2}$ is a  a $\gamma_R$-function on $T_2$. 
									But then we can choose  $f^\prime$ so that  $f^\prime|_{T_2} = f_2$. 
									Thus $y_2 \in V^{012}(T)$. 
									
									So let $y_1 \in V^{01}(T)$ and suppose $y_2 \in V^{012}(T)$.  
									Then there is a $\gamma_R$-function $f^{\prime\prime}$ on $T$ with 
									$f^{\prime\prime}(y_2) = 1$. Since $x \in V^{02}(T)$, $f^{\prime\prime}(x)  =0$ 
									and $f^{\prime\prime}(y_1) = 2$, a contradiction. 
									Thus, if $y_1 \in V^{01}(T)$ then $y_2 \in V^{02}(T)$. 
																							
										Finally, let us consider a path $y_1, x, y_2, z$ in $T$, where 
										$y_1 \in V^{01}(T)$, $x,y_2 \in V^{02}(T)$ and $deg(x) = 2$. 
										Assume to the contrary that $N(y_2) = \{z_1,..,z_s=x\}$ with $s \geq 3$. 
										Denote by $T_{z_p}$ the connected component of $T-y_2$ that contains $z_p$, $p = 1,2,..,s$.
										By applying results proved above for $x \in V^{02}(T)$ with $deg(x) \geq 3$  
										to $y_2$, we obtain that (a) $y_2$ has exactly $2$ neighbors in $V^-(T)$, 
										say, without loss of generality, $z_1, z_2 \in V^-(T)$, and 
										(b) $\gamma_R (T_{z_i} - z_i) = \gamma_R (T_{z_i}) - 1$,  where $i  = 1,2$.
                Recall now that: $h(x) = 2$,  $h|_{T_i}$ is no RD-function on $T_i$  and  $h|_{T_i - y_i}$ is a $\gamma_R$-function on $T_i-y_i$, $i = 1,2$. 
                Hence $h(y_1) = h(y_2) = 0$ and $h|_{T_{z_j}}$ is a  $\gamma_R$-function on $T_{z_j}$, $j \leq s-1$. 
                Since $\gamma_R (T_{z_i} - z_i) = \gamma_R (T_{z_i}) - 1$, $i  = 1,2$, additionally we can choose $h$ so that 
										$h(z_1) = h(z_2) = 1$. But then the function $h_1$ defined as $h_1(u) = h(u)$ when 
										$u \in V(T) - \{y_1,x,y_2,z_1,z_2\}$ and $h_1(y_1) = h_1(x) = 1$, $h_1(y_2) = 2$, $h_1(z_1) = h(z_2) = 0$ 
										is a $\gamma_R$-function on $T$. Now $h_1(x) = 1$,  $h_1(y_2)=2$ and $xy_2 \in E(G)$ lead to a contradiction. 
               	Thus, $N(y_2) = \{x, z\}$.  

										Suppose $z \not \in V^{01}(T)$. Then there is a $\gamma_R$-function $h_4$ on $T$ with $h_4(z) = 2$. 
								     If $h_4(y_2) = 2$, then $h_4(x) = 0$ and the function $h_5$ on $T$ defined as 
										$h_5(x) = h_5(y_2) = 1$ and $h_5(u)  = h_4(u)$ otherwise, is  a $\gamma_R$-function on $T$, a contradiction. 
										Hence $h_4(y_2) = 0$ and since $y_1 \in V^{01}(T)$, $h_4(x) = 2$ and $h_4(y_1) = 0$. 
										But then the function $h_6$ on $T$ defined as 	$h_6(x) = h_6(y_1) = 1$ and $h_6(u)  = h_4(u)$ otherwise,
										is  a $\gamma_R$-function on $T$, a contradiction. 
										Therefore $z  \in V^{01}(T)$, and we are done. 
\medskip
										
			(iii) Assume that $u_1, u_2 \in V^{01}(T)$ are adjacent. 
						Let $T_{u_i}$ be the component of $T-u_1u_2$ that contains 												
       $u_i$, $i=1,2$. Let $g_i$ be a $\gamma_R$-function on $T$ with $g_i(u_i) = 1$, $i =  1,2$.
						Hence $g_i(T_{u_j})$ is a $\gamma_R$-function on $T_{u_j}$, $i,j = 1,2$. 
			Thus $\gamma_R(T) = \gamma_R(T_{u_1}) + \gamma_R(T_{u_2})$. 
			Define now  a $\gamma_R$-function $g_3$ on $T$ as $g_3|_{T_i} = g_i|_{T_i}$, $i= 1,2$. 
			But then a function $g_4$ defined as $g_4(u) = g_3(u)$ when $u \in V(T) - \{u_1, u_2\}$, 
			$g_4(u_1) = 2$ and $g_4(u_2) = 0$ is a $\gamma_R$-function on $T$, 
			contradicting $u_1 \in V^{01}(T)$. Thus $V^{01}(T)$ is independent. 
\end{proof}

\section{Proof of the main result}

\begin{proof}[\bf Proof of Theorem \ref{main}]
Let $T$ be a $\gamma_R$-excellent tree.  
First, we shall prove the following statement.
\begin{itemize}
\item[($\mathcal{P}_2$)] There is a labeling $L:V (T ) \rightarrow\{A, B,C, D\}$ such that
                (a) $L_A(T)$ is either empty or independent,  
                (b) each component of $\left\langle L_B(T) \right\rangle$ and 
                   $\left\langle L_D(T) \right\rangle$ is isomorphic to $K_2$, 
                (c) each element of $L_B(T)$ has degree $2$ and it is adjacent 
                   to exactly one vertex in $L_A(T)$, 
                (d) each vertex $v$ in $L_C(T)$ has exactly $2$ neighbors in $L_A(T) \cup L_D(T)$,  
                      and  if $deg(v) = 2$ then  both  neighbors of $v$ are in $L_D(T)$.
\end{itemize}

We know that  $V(T)  = V^{01}(T) \cup V^{012}(T) \cup V^{02}(T)$.
Define  a labeling $L:V (T ) \rightarrow\{A, B,C, D\}$ by $L_A(T) = V^{01}(T)$, $L_D(T) = V^{012}(T)$, 
$L_B(T) = \{x \in V^{02}(T) \mid deg(x) =2 $ and $|N(x) \cap V^{02}(T)| =1\}$, 
and $L_C(T) = V^{02}(T) - L_B(T)$.  
The validity of $(\mathcal{P}_2)$  immediately follows by Lemma \ref{adj}.

Denote by $\mathscr{T}_1$ the family of all labeled, as in  ($\mathcal{P}_2$), trees $T$. 
 We shall show that if $(T, L)  \in \mathscr{T}_1$ then $(T, L) \in \mathscr{T}$. 
\medskip

 {\bf (I)} {\em Proof of  $(T, L) \in \mathscr{T}_1 \Rightarrow (T, L) \in \mathscr{T}$.}
\medskip

	Let $(T, L) \in \mathscr{T}_1$. The following claim is immediate. 
	\medskip

	{\bf Claim \ref{main}{\bf .1}}		
		\begin{itemize} 
						\item[(i)] Each leaf of $T$ is in $L_A(T) \cup L_D(T)$. 
						\item[(ii)] If $v$ is a stem of $T$, then $v$  is adjacent to at most $2$ leaves. 
						\item[(iii)] If $u_1$ and $u_2$ are leaves adjacent to the same stem, 
						                    then $u_1, u_2 \in L_A(T)$.
						\end{itemize}
		Claim \ref{main}.1  will be used in the sequel without specific reference.				
		We now proceed by induction on $k = |L_B \cup L_C|$. 
		 The base case, $k \leq 2$, is an immediate consequence of the following easy claim, the proof of which is omitted. 
\medskip

	{\bf Claim \ref{main}{\bf .2}}		(see Fig.\ref{fig:Fig 1})
		\begin{itemize} 
						\item[(i)] If $k=0$ then $(T, L) = (H_1, I^1)$. 
						\item[(ii)] If $k=1$ then $(T, L)$ is obtained from $(H_1, I_1)$ by operation $O_2$, i.e. $(T, L) = (H_{11}, I^{11})$. 
						\item[(iii)] If $k=2$ then either $(T, L)$ is    
						                            $(H_r, I^r)$ with $r \in \{2,3,4,5\}$, or  $(T, L)$  is obtained  from $(H_{11}, I^{11})$
						                    by operation $O_1$ or by operation  $O_2$ (see the graphs 
																$(H_s, I^s)$ where  $s \in \{6,7, 8, 9,10\}$. 
						\end{itemize}

									Let $k \geq 3$ and suppose that each tree $(H, L^\prime) \in \mathscr{T}_1$ with 
									$|L^\prime_B(H) \cup L^\prime_C(H)| < k$ is in $\mathscr{T}$.
									Let now $(T,L) \in \mathscr{T}_1$  and $k = |L_B(T) \cup L_C(T)|$. 
									To prove the required result,  it suffices to show that $T$ has a subtree, say $U$, 
									such that $(U, L|_U) \in \mathscr{T}_1$, and $(T, L)$ is obtained from $(U, L|_U)$  
									by one of operations $O_1$,..,$O_{4}$. 
									Consider any diametral path $P: x_1,..,x_n$ in $T$. Clearly $x_1$ is a leaf. 
									Denote by $x_i^1, x_i^2,..$ all neighbors of $x_i$, which do not belong to  $P$, $2 \leq i \leq n-1$.
					\medskip
					
{\bf Case 1}:   $sta(x_1) = A$ and $sta(x_2) = B$. 
                        Then $deg (x_1) = 1$, $deg (x_2) = deg (x_3) =2$, $sta(x_3) = B$ and $sta(x_4) = A$. 
												Thus $T$ is obtained from $T - \{x_1, x_2, x_3\} \in \mathscr{T}_1$ 
												and a copy of $H_2$ by operation $O_3$ (via $x_4$).  	{\tiny\qed} 
		\medskip
										
{\bf Case 2}: $sta(x_1) = A$ and $sta(x_2) = C$.  
                       Hence $deg(x_2) \geq 3$.  By the choice of $P$, $deg(x_2) = 3$, $x_2^1$ is a leaf, 
											$sta(x_2^1) = A$, and $sta(x_3) = C$.  
											If $deg(x_3) \geq 4$ then $T$ is obtained from $T - \{x_2^1, x_1, x_2\} \in \mathscr{T}_1$
											and a copy of $F_1$ by operation $O_1$. So, let $deg(x_3) = 3$. 
											Assume first that $sta(x_4) = A$. 
											Then either $x_3^1$ is a leaf of status $A$ or $x_3^1$ is a stem, $deg(x_3^1) =2$, 
											and both $x_3^1$ and its leaf-neighbor have status $D$. Thus, $T$ is obtained from 
											$T-(N[x_2] \cup N[x_3^1]) \in \mathscr{T}_1$ and 
											a copy  of $H_3$ or $H_4$, respectively, by operation $O_3$ (via $x_4$). 
											Finally let $sta(x_4) = D$.  By the choice of $P$, 
										 either $x_3^1$ is a leaf of status $A$ and then 
											$T$ is obtained from 	$T - (N[x_2] \cup\{ x_3^1\} ) \in \mathscr{T}_1$ 
													and a copy of  $H_3$ by operation $O_4$, 
											or
										 $x_3^1$ is a stem of degree $2$ and both $x_3^1$ and its leaf-neighbor have status $D$, 
										and then  $T$ is obtained from 	$T - \{x_2^1, x_1, x_2\} \in \mathscr{T}_1$
											and a copy of $F_1$ by operation $O_1$. 
														 	{\tiny\qed} 
			\medskip
											
			In what follows, let $sta(x_1) = D$. Hence $deg(x_2) = 2$, $sta(x_2) = D$ and $sta(x_3) = C$. 
			If $deg(x_3) = 2$ then $T$ is obtained from 	$T - N[x_2] \in \mathscr{T}_1$ and  a copy  of $F_4$ by  operation $O_2$.  
							\medskip
											
{\bf Case 3}:  $deg(x_3) = 3$ and $sta(x_4)  \in \{A, D\}$. 
                        In this case $sta(x_3^1) = C$, $x_3^1$ is a stem, $deg(x_3^1) = 3$, 
												and the leaf neighbors of $x_3^1$ 	 have status $A$.  
												Now (a) if $sta(x_4) = A$ then $T$ is obtained from 	$T - (N[x_2] \cup N[x_3^1]) \in \mathscr{T}_1$ 
												and  a copy  of $H_4$ by  operation $O_3$  (via $x_4$), and 
												(b) if $sta(x_4) = D$ then $T$ is obtained from 	$T - (N[x_2] \cup N[x_3^1]) \in \mathscr{T}_1$ 
												and  a copy  of $H_4$ by  operation $O_4$ (via $x_4$). 
	                      {\tiny\qed} 
\medskip

{\bf Case 4}:  $deg(x_3) = 3$, $sta(x_4)  = C$  and $sta(x_3^1) = A$. Hence  $x_3^1$ is a leaf. 
                        If  $deg(x_4) = 3$ and $sta(x_5)  = sta(x_4^1) = D$,  or $deg(x_4) \geq 4$, then 
												 $T$ is obtained from 	$T - \{x_1, x_2, x_3, x_3^1\} \in \mathscr{T}_1$
													and  a copy  of $F_2$ by  operation $O_1$. 
												So, let $deg(x_4) = 3$ and the status of at least one of $x_5$ and $x_4^1$ is $A$. 
												Assume first that $sta(x_4^1) = A$.  Hence $x_4^1$ is a leaf (by the choice of $P$). 
													If $sta(x_5) = A$ then $T$ is obtained from a copy  of $H_4$ and 
														a tree in $ \mathscr{T}_1$  by  operation $O_3$ (via $x_5$).  
														If $sta(x_5) = D$ then $T$ is obtained from a copy  of $H_4$ and 
														a tree in $ \mathscr{T}_1$  by  operation $O_4$ (via $x_5$).
													Second, let 		$sta(x_4^1) = D$. Hence $sta(x_5) = A$,  $deg(x_4^1) = 2$ and 
												the status of the leaf-neighbor of $x_4^1$ is $D$.  
												But then $T$ is obtained from a copy  of $H_5$ and 
														a tree in $ \mathscr{T}_1$  by  operation $O_3$ (via $x_5$).  
  {\tiny\qed} 
\medskip

{\bf Case 5}:  $deg(x_3) = 3$, $sta(x_4) = C$ and $sta(x_3^1) = D$.  
                        Hence $deg(x_3^1) =2$, $x_3^1$ is a stem, 
												and the leaf-neighbor of $x_3^1$ has status $D$. 
												If $deg(x_4) \geq 4$ or $sta(x_5) = sta(x_4^1) = D$, then $T$ is obtained 
												from $T - N[\{x_2, x_3^1\}] \in  \mathscr{T}_1$ and a copy of $F_3$ by operation $O_1$. 
												So, let $deg(x_4) =3$ and  at least one of $x_5$ and $x_4^1$ has  status  $A$.
													Assume $sta(x_4^1) = A$. Hence $x_4^1$ is a leaf. 
												If $sta(x_5) = A$ then $T$ is obtained 
												from $T - N[\{x_2, x_3^1, x_4^1\}] \in  \mathscr{T}_1$ and 
												a copy of $H_6$ by operation $O_3$ (via $x_5$).
													If $sta(x_5) = D$ then $T$ is obtained 
												from $T - N[\{x_2, x_3^1, x_4^1\}] \in  \mathscr{T}_1$ and a copy of $H_6$ by operation $O_4$ (via $x_5$).
												Now let $sta(x_4^1) = D$. Hence $sta(x_5) = A$ and then  $T$ is obtained from a copy  of $H_7$ and 
												a tree in $ \mathscr{T}_1$  by  operation $O_3$ (via $x_5$).  					
  {\tiny\qed} 
\medskip

{\bf Case 6}:  $deg(x_3) \geq 4$. 

                        Hence $x_3$ has a neighbor, say $y$, such that $y \not = x_4$ and  $sta (y) = C$. 
												By the choice of $P$, $y$ is a stem which is adjacent to exactly $2$ leaves, say $z_1$ and $z_2$, 
												and $sta(z_1) = sta (z_2) = A$. But then 		$T$ is obtained 
												from $T - \{y, z_1, z_2\} \in  \mathscr{T}_1$ and a copy of $F_1$ by operation $O_1$.                         
  {\tiny\qed} 
\medskip

{\bf (II)}   {\em Proof of  $(T, S) \in \mathscr{T} \Rightarrow  (T, S) \in \mathscr{T}_1$. }
                   Obvious.  {\tiny\qed} 
\medskip

It remains  the following. 
\medskip

{\bf (III)} {\em Proof of  $(T, S) \in \mathscr{T} \Rightarrow T$  is $\gamma_R$-excellent and ($\mathcal{P}_1$) holds}. 
\medskip

Let $(T, S)  \in \mathscr{T} $. We know that    $(T, S) \in \mathscr{T}_1$.  
We now proceed by induction on $k = |S_B \cup S_C|$. 
First let $k \leq 2$.  By Claim \ref{main}.2, $T \in \mathscr{H} = \{H_1,..,H_{11}\}$. 
It is easy to see that all elements of $\mathscr{H}$ are $\gamma_R$-excellent graphs 
and   ($\mathcal{P}_1$) holds for each $T \in  \mathscr{H}$.

		Let $k \geq 3$ and suppose that if $(H, S^\prime) \in \mathscr{T}$ 
	 and $|S^\prime_B(H) \cup S^\prime_C(H)| < k$, 
	then $H$ is $\gamma_R$-excellent and ($\mathcal{P}_1$) holds with  
	$(T,S)$ replaced by $(H, S^\prime)$. 
			So, let $(T, S) \in \mathscr{T}$  and $k = |S_B(T) \cup S_C(T)|$.				
		 Then there is a $\mathscr{T}$-sequence $\tau: (T^1, S^1), \dots, (T^{j-1}, S^{j-1}), (T, S)$. 
		By induction hypothesis, $T^{j-1}$ is $\gamma_R$-excellent and  ($\mathcal{P}_1$) holds with  
	$(T, S)$ replaced by $(T^{j-1}, S^{j-1})$. 
		We consider four possibilities depending on whether $T$ is obtained from $T^{j-1}$
		by operation  $O_1, O_2, O_3$ or $O_4$.
\medskip

	{\bf Case 7}:  $T$ is obtained from $T^{j-1} \in \mathscr{T}$ and $F_a$ by operation $O_1$, $a \in \{1,2,3\}$. 
		                          Hence $T$ is obtained  after adding the edge $ux$ to the union of $T^{j-1}$ and $F_a$, where 
															$sta_{T^{j-1}}(u) = sta_{F_a}(x) = C$ (see Fig. \ref{fig:F1234}).  
	First note that  $\gamma_R(F_a) = a+1$, and $F_2$ and $F_3$ are $\gamma_R$-excellent graphs. 
		Since $\gamma_R(F_a-x) = 	\gamma_R(F_a)$ and $u \in V^{02}(T^{j-1})$,  Lemma \ref{addedge} 
		implies $\gamma_R(T) = \gamma_R(T^{j-1}) + \gamma_R(F_a)$. 
		Hence for any $\gamma_R$-function $g$ on $T$, the weight of $g|_{F_a}$ 		
		 is not more than $\gamma_R(F_a)$. But then $g(x) \not = 1$ and either  
		$g|_{F_a}$   is a $\gamma_R$-function on $F_a$ or   $g|_{F_a-x}$ is a $\gamma_R$-function on $F_a-x$. 
		By inspection of all $\gamma_R$-functions on $F_a$ and $F_a-x$, we obtain 
		\medskip
		
		\begin{itemize}
		\item[($\alpha_1$)] $S_A(T) \cap V(F_a) = V^{01}(T)\cap V(F_a)$,   $S_B(T) \cap V(F_a) = \emptyset$, 
		                                        $\{x\} = S_C(T) \cap V(F_a) = V^{02}(T)\cap V(F_a)$, and $S_D(T) \cap V(F_a) = V^{012}(T)\cap V(F_a)$. 
		\end{itemize}
		
		By the definition of operation $O_1$ it immediately follows 
		\medskip
		
			\begin{itemize}
		\item[($\alpha_2$)] $S_X(T) \cap V(T^{j-1}) = S_X^{j-1}(T^{j-1})$, for all $X \in \{A, B, C, D\}$. 
		\end{itemize}

		Let $f_1$ be a $\gamma_R$-function on $T^{j-1}$ and $f_2$ a $\gamma_R$-function on $F_a$. 
		Then the RD-function $f$ on $T$ defined as $f|_{T^{j-1}} = f_1$ and $f|_{F_a} = f_2$ is a 
		$\gamma_R$-function on $T$. 
				Since $f_1$ was chosen arbitrarily, we have 
		\begin{itemize}
				\item[($\alpha_3$)]  	$V^{01}(T^{j-1}) \subseteq V^{01}(T) \cup  V^{012}(T)$, 
		$V^{02}(T^{j-1}) \subseteq V^{02}(T) \cup  V^{012}(T)$, and $V^{012}(T^{j-1}) \subseteq  V^{012}(T)$.
		\end{itemize}		
		By ($\alpha_1$) and ($\alpha_3$) we conclude that  $T$ is $\gamma_R$-excellent. 
		\medskip

	Now we shall prove that 
		\begin{itemize}
		\item[($\alpha_4$)] $V^{01}(T)\cap V(T^{j-1}) = V^{01}(T^{j-1})$,   $V^{02}(T)\cap V(T^{j-1}) = V^{02}(T^{j-1})$, 
		                                       and $V^{012}(T)\cap V(T^{j-1}) = V^{012}(T^{j-1})$.		                       
		\end{itemize}

		Assume there is a vertex $z \in V^{02}(T^{j-1}) \cap V^{012}(T)$. 
	By Lemma \ref{adj}, $z$ is adjacent to at most $2$ elements of $V^-(T^{j-1})$. 
	Now by ($\alpha_3$) and since $\Delta(\left\langle V^-(T) \right\rangle) \leq 1$ (by Lemma \ref{no3}), 
	$z$ is adjacent to exactly one element of $V^-(T^{j-1})$. 
	But then Lemma \ref{adj} implies that there is a path $z_1, z, z_2, z_3$ in $T^{j-1}$
	such that 	$deg_{T^{j-1}}(z) =  deg_{T^{j-1}}(z_2) = 2$, $z, z_2 \in V^{02}(T^{j-1})$ and $z_1, z_3 \in V^{01}(T^{j-1})$. 
	Since ($\mathcal{P}_1$) is true for $T^{j-1}$, 	
		$sta_{T^{j-1}}(z_1) = sta_{T^{j-1}}(z_3) = A$,  and 	$sta_{T^{j-1}}(z) =  sta_{T^{j-1}}(z_2) = B$. 
		Clearly, at least one of $z_1$ and $z_3$ is a cut-vertex. 
		Denote by  $Q$ the graph $\left\langle \{z_1, z, z_2, z_3\}\right\rangle$ and 
		let the vertices of $Q$ are labeled as in $T^{j-1}$. 
		Let $U_s$ be the connected component of $T-\{z, z_2\}$, which contains $z_s$, $s=1,3$. 
		
		Assume first that $T^1$ is a subtree of $U \in \{U_1, U_3\}$. 
		Then there is $i$ such that $T^i$ is obtained from $T^{i-1}$ and $Q$ by operation $O_3$.  
		Hence $T^{i-1}$ is a subtree of $U$. Recall that if $y \in V(T^r)$ and $r \leq s \leq j-1$, 
		then $sta_{T^r}(y) = sta_{T^s}(y)$. Using this fact, we can choose $\tau$ so, that $T^{i-1} = U$. 
		Therefore  $U$ is in $\mathscr{T}$. Suppose that neither $z_1$ nor $z_3$ is a leaf of $T^{j-1}$. 
		Define $R^s = T^{i+s} - (V(T^{i-1}) \cup \{z, z_2\})$, $s = 1,..,j-1-i$. 
		Since clearly $R^1$ is in $\{H_2,..,H_7\}$, the sequence $R^1, R^2,..,R^{j-1-i}$ is a $\mathscr{T}$-sequence of 
		$U^\prime$, where $\{U,U^\prime\} = \{U_1, U_2\}$.
		Thus, both $U_1$ and $U_3$ are in $\mathscr{T}$, and $sta_{U_1}(z_1) = A$. 
		By the induction hypothesis, $z_1 \in V^{01}(U_1)$. 
		
		Suppose now that $u \in V(U_3)$. 
		Consider the sequence of trees $U_3, U_4, U_5$, where $U_4$ is obtained from $U_3$ and $Q$ by operation $O_3$ (via $z_3$),  
		and $U_5$  is obtained from $U_4$ and $F_a$ by operation $O_1$. 
		Clearly $U_5$ is in $\mathscr{T}$, $sta_{U_5}(z_1) = A$ and by the induction hypothesis, $z_1 \in V^{01}(U_5)$. 
		Since $T = (U_5\cdot U_1)(z_1)$ and $\{z_1\} = V^{01}(U_1) \cap V^{01}(U_5)$, 
		by Proposition \ref{coales12} we have $z_1 \in V^{01}(T)$. 
		But then Lemma \ref{adj} implies $z_2 \in V^{02}(T)$, a contradiction. 
		
		Now let $u \in V(U_1)$. Denote by $U_2$ the graph obtained from $U_1$ and $F_a$ by operation $O_3$. 
		Then $U_2$ is in $\mathscr{T}$, $sta_{U_2}(z_1) = A$, and by induction hypothesis, $z_1 \in V^{01}(U_2)$. 
		Define also the graph $U_6$ as obtained from $U_3$ and $Q$ by operation $O_3$, i.e. 
		$U_6 = (U_3 \cdot Q)(z_3)$. Then $U_6$ is in $\mathscr{T}$, $sta_{U_6}(z_1) = A$ and 
		by induction hypothesis, $z_1 \in V^{01}(U_6)$. 
		Now by Proposition \ref{coales12}, $z_1 \in V^{01}(T)$, 
		which leads to $z_2 \in V^{02}(T)$ (by Lemma \ref{adj}), a contradiction. 
		
		Thus, in all cases we have a contradiction. 
		Therefore $V^{02}(T^{j-1}) \subseteq V^{02}(T)$ when both $z_1$ and $z_3$ are cut-vertices. 
		If $z_1$ or $z_3$ is a leaf, then, by similar arguments,
		we can obtain the same result. 
		
		Let now $T^1 \equiv Q$. Then $T^2$ is obtained from $T^1$ and $H_k$ by operation $O_3$. 
		Consider the sequence of trees $\tau_1: T^1_1 = H_k, T^2, ..., T^{j-1}$. 
		Clearly $\tau_1$ is a $\mathscr{T}$-sequence of $T^{j-1}$ and $T^1_1 \not = Q$. 
				Therefore we are in the previous case. 
				Thus, $V^{02}(T^{j-1}) = V(T^{j-1}) \cap V^{02}(T)$. 
				
				Assume now that there is a vertex $w \in V^{01}(T^{j-1}) \cap V^{012}(T)$. 
			By Lemma \ref{adj}(i)   $w$ has  a neighbor in $T$, say $w^\prime$, 
			such that $w^\prime \in V^{012}(T)$. Since $w \not \equiv u$, $w^\prime \in V(T^{j-1})$.  
			But all neighbors of $w$ in $T^{j-1}$ 	are in $V^{02}(T^{j-1})$
			(by Lemma \ref{adj} applied to $T^{j-1}$ and $w$). 
			Since $V^{02}(T^{j-1}) = V(T^{j-1}) \cap V^{02}(T)$, we obtain a contradiction. 
\medskip		
				
		Thus ($\alpha_4$) is true. 
	
 Now we are prepared to prove that $(\mathcal{P}_1)$ is valid. 
Using, in the chain of equalities below, consecutively  ($\alpha_2$), the induction hypothesis, 
($\alpha_1$) and ($\alpha_4$),  we obtain 
\[
     S_A(T) = S_A^{j-1}(T^{j-1}) \cup (S_A(T) \cap V(F_a)) = V^{01}(T^{j-1}) \cup (V^{01}(T) \cap V(F_a)) = V^{01}(T),
\] 
and similarly, $S_D(T) = V^{012}(T)$.  
Since $u \not\in S_B(T)$ and $S_B(T) \cap V(F_a) = \emptyset$, we have 
\begin{align*}
S_B(T)&= S_B(T) \cap V(T^{j-1}) \stackrel{(\alpha_2)}{=} S_B^{j-1}(T^{j-1}) \\
&=  \{t \in V^{02}(T^{j-1}) \mid deg_{T^{j-1}}(t) =2  {\rm\ and\ }  |N_{T^{j-1}}(t) \cap V^{02}(T^{j-1})|=1\}  \notag \\
&\stackrel{(\alpha_4)}{=} 	\{t \in V^{02}(T) \cap V(T^{j-1}) \mid deg_{T}(t) =2  {\rm\ and\ }  |N_{T}(t) \cap V^{02}(T)| =1\}  \notag \\
&= \{t \in V^{02}(T)  \mid deg_{T}(t) =2  {\rm\ and\ }  |N_{T}(t) \cap V^{02}(T)| =1\}.
\end{align*}

The last equality follows from $deg_T(x) > 2$ and $\{x\} = V^{02}(T) \cap V(F_a)$ (see ($\alpha_1$)). 
Now the equality $S_C(T) = V^{02}(T) - S_B(T)$ is obvious. 
Thus, $(\mathcal{P}_1)$ holds and we are done. 
\medskip

{\bf Case 8}:  $T$ is obtained from $T^{j-1} \in \mathscr{T}$  by operation $O_2$. 
 Clearly $\gamma_R(F_4) = \gamma_R(F_4-x) = 2$. 
By Lemma \ref{addedge}, $\gamma_R(T) = \gamma_R(T^{j-1}) + \gamma_R(H_4)$. 
Let $f_1$ be a $\gamma_R$-function on $T^{j-1}$ and $f_2$ a $\gamma_R$-function on $F_4$. 
Then the function $f$ defined as $f|_{T^{j-1}} = f_1$ and $f|_{F_4} = f_2$ is a $\gamma_R$-function on $T$. 
Therefore $V^{012}(T^{j-1}) \subseteq V^{012}(T)$, 
$V^{01}(T^{j-1}) \subseteq V^{01}(T) \cup V^{012}(T)$, and $V^{02}(T^{j-1}) \subseteq V^{02}(T) \cup V^{012}(T)$.

Assume that there is $y \in V^{0s}(T^{j-1}) \cap V^{012}(T)$, $s \in \{1,2\}$, and let $f^\prime$ be  
 a $\gamma_R$-function on $T$ with  $f^\prime(y) = r \not\in \{0,s\}$. 
If $f^\prime|_{T^{j-1}}$ is an RD-function on $T^{j-1}$, then $f^\prime|_{T^{j-1}}(V(T^{j-1})) > \gamma_R(T^{j-1})$ 
and $f^\prime|_{F_4}(V(F_4)) \geq 2$. This leads to $f^\prime(V(T)) > \gamma_R(T)$, a contradiction. 
Hence $f^\prime|_{T^{j-1}}$ is no RD-function on $T^{j-1}$ and $f^\prime|_{T^{j-1} - u}$ 
is a $\gamma_R$-function on $T^{j-1} - u$. 
Define now an RD-function $f^{\prime\prime}$ on $T^{j-1}$ as $f^{\prime\prime}|_{T^{j-1} - u} =  f^\prime|_{T^{j-1}-u}$
 and $f^{\prime\prime}(u) = 1$.  Since $u \in V^-(T^{j-1})$,  $f^{\prime\prime}$ 
is a $\gamma_R$-function on $T^{j-1}$ with $f^{\prime\prime}(y) = r \not \in \{0,s\}$, 
 a contradiction with $y \in V^{0s}(T^{j-1})$. Thus 
\begin{itemize}
\item[($\alpha_5$)] $V^{012}(T^{j-1}) = V^{012}(T) \cap V(T^{j-1}) $, $V^{01}(T^{j-1}) = V^{01}(T) \cap V(T^{j-1}) $, 
                       and $V^{02}(T^{j-1}) = V^{02}(T) \cap V(T^{j-1}) $.  
\end{itemize}

Let $x, x_1, x_2$ be a path in $F_4$, $h_1$ a  $\gamma_R$-function on $T^{j-1}$ with $h_1(u) = 2$, 
and $h_2$ a $\gamma_R$-function on $T^{j-1} - u$. 
				Define $\gamma_R$-functions $g_1,..,g_4$  on $T$ as follows: 
				\begin{itemize}
				\item[$\bullet$] $g_1|_{T^{j-1}} = h_1$, $g_1(x) = g_1(x_2) = 0$ and $g_1(x_1) =2$;	
			 \item[$\bullet$] $g_2|_{T^{j-1}} = h_1$, $g_2(x) = 0$ and $g_2(x_1) = g_2(x_2) = 1$;	
			 \item[$\bullet$] $g_3|_{T^{j-1}} = h_1$, $g_3(x) = g_3(x_1) = 0$ and $g_3(x_2) = 2$;
				 \item[$\bullet$] $g_4|_{T^{j-1}-u} = h_2$, $g_4(u) = g_4(x_1) = 0$, $g(x) = 2$ and $ g_4(x_2) = 1$.
			\end{itemize}
			This,  ($\alpha_5$) and Lemma \ref{no3} allows us to conclude that  $T$ is $\gamma_R$-excellent, 
			$x_1,x_2 \in V^{012}(T)$ and $x \in V^{02}(T)$. 
	
	By induction hypothesis,   ($\mathcal{P}_1$) holds with  $(T, S)$ replaced by $(T^{j-1}, S^{j-1})$. Then 
Since $u \not\in S_B(T)$ and $S_B(T) \cap V(F_4) = \emptyset$, we have 
\begin{align*}
 S_B(T) & =  S_B^{j-1}(T^{j-1}) \\
&= \{t \in V^{02}(T^{j-1}) \mid deg_{T^{j-1}}(t) =2  {\rm\ and\ }  |N_{T^{j-1}}(t) \cap V^{02}(T^{j-1})|=1\} \\
& = \{t \in V^{02}(T)  \mid deg_{T}(t) =2  {\rm\ and\ }  |N_{T}(t) \cap V^{02}(T)| =1\}.
\end{align*}
The last equality follows from $deg_T(x) > 2$ and $\{x\} = V^{02}(T) \cap V(F_4)$. 
Now  the equality $S_C(T) = V^{02}(T) - S_B(T)$ is obvious. 
Thus, $(\mathcal{P}_1)$ is true. 
\medskip

{\bf Case 9}:  $T$ is obtained from $T^{j-1} \in \mathscr{T}$  by operation $O_3$.
                        Say $T = (T^{j-1}\cdot H_k)(u,v:u)$, where $sta_{T^{j-1}}(u) = sta_{H_k}(v) = sta_T(u) = A$ and $k \in \{2,..,7\}$. 
												Hence $S_X(T) = S_X^{j-1}(T^{j-1}) \cup I_X^k(H_k)$, for any $X \in \{A, B, C, D\}$. 
												We know that ($\mathcal{P}_1$) holds with $(T, S)$ replaced by any of $(T^{j-1}, S^{j-1})$ and $(H_k, I^k)$. 
												Hence $S_A(T)  = S_A^{j-1}(T^{j-1}) \cup I_A^k(H_k) = V^{01}(T^{j-1}) \cup V^{01}(H_k)$. 
												Now, by Proposition \ref{coales12}, applied to $T^{j-1}$ and $H_k$, $S_A(T) = V^{01}(T)$.
												Similarly we obtain $S_D(T) = V^{012}(T)$. 
                         We also have 
				\begin{align*}										
						S_B(T)&  = S_B^{j-1}(T^{j-1}) \cup I_B^k(H_k)\\
							          & = 	\{t \in V^{02}(T^{j-1}) \mid deg_{T^{j-1}}(t) =2   {\rm\ and\ } |N_{T^{j-1}}(t) \cap V^{02}(T^{j-1})| =1\} \\ 
												& \cup  \{t \in V^{02}(H_k) \mid deg_{{H_k}}(t) =2   {\rm\ and\ } |N_{H_k}(t) \cap V^{02}(H_k)| =1\}\\
												& = \{t \in V^{02}(T^{j-1}) \cup V^{02}(H_k) \mid deg_T(t) =2   {\rm\ and\ }  |N_T(t) \cap V^{02}(T)| =1\},
				\end{align*}									
																										as required, because $V^{02}(T^{j-1}) \cup V^{02}(H_k)  = V^{02}(T)$ (by Proposition \ref{coales12}).  
         Now the equality $S_C(T) = V^{02}(T) - S_B(T)$ is obvious. 
\medskip

{\bf Case 10}:  $T$ is obtained from $T^{j-1} \in \mathscr{T}$  and $H_k \in \mathscr{T}$, $k \in \{3,4,6\}$, by operation $O_4$.
By Lemma \ref{01012} and induction hypothesis, $\gamma_R (T) = \gamma_R (T^{j-1}) + \gamma_R (H_k)  - 1$ and $u \in V^{012}(T)$. 
Let $f_1$ be a $\gamma_R$-function on $T^{j-1}$ and $f_2$ a $\gamma_R$-function on $H_k - v$. 
Then the function $f$ defined as $f|_{T^{j-1}} = f_1$ and $f|_{H_k-v} = f_2$ is a $\gamma_R$-function on $T$. 
Therefore $V^{012}(T^{j-1}) \subseteq V^{012}(T)$, 
$V^{01}(T^{j-1}) \subseteq V^{01}(T) \cup V^{012}(T)$, and $V^{02}(T^{j-1}) \subseteq V^{02}(T) \cup V^{012}(T)$.
Assume that there is $y \in V^{0s}(T^{j-1}) \cap V^{012}(T)$, $s \in \{1,2\}$, and let $f^\prime$ be  
 a $\gamma_R$-function on $T$ with  $f^\prime(y) = r \not\in \{0,s\}$. 
But then $f^\prime|_{T^{j-1}}$ is no RD-function on $T^{j-1}$, $f^\prime(u) = 0$, 
$f^\prime|_{T^{j-1}-u}$ is a $\gamma_R$-function on $T^{j-1}-u$ and 
$f^\prime|_{H_k}$ is a $\gamma_R$-function on $H_k$.
Define now an RD-function $g_1$ on $T^{j-1}$ as $g_1|_{T^{j-1} - u} =  f^\prime|_{T^{j-1}-u}$ and $g_1(u) = 1$. 
Since $g_1(V(T^{j-1}) )= \gamma_R(T^{j-1}-u) + 1 = \gamma_R(T^{j-1})$, $g_1$ is a  
$\gamma_R$-function on  $T^{j-1}$. But $g_1(y) = r \not \in \{0,s\}$,  a contradiction.
Thus 
\begin{itemize}
\item[($\alpha_6$)] $V^{012}(T^{j-1}) = V^{012}(T) \cap V(T^{j-1}) $, $V^{01}(T^{j-1}) = V^{01}(T) \cap V(T^{j-1}) $, 
                       and $V^{02}(T^{j-1}) = V^{02}(T) \cap V(T^{j-1}) $.  
\end{itemize}
The next claim is obvious.
\medskip

	{\bf Claim \ref{main}{\bf .3}}	
	Let $x$ be the neighbor of $v$ in $H_k$, $k \in \{3,4,6\}$.
	Then $\gamma_R(H_3) = 4$, $\gamma_R(H_4) = 5$, $\gamma_R(H_6) = 6$, 
	 $\gamma_R(H_k-v) = 	\gamma_R(H_k - \{v,x\}) = \gamma_R(H_k)$, 
	and $l(x) = 0$ for any $\gamma_R$-function on $H_k-v$. 
	\medskip
	
		Let $h$ be a $\gamma_R$-function on $T$. 
		We know that $u \in V^{012}(T)$, $u \in V^{012}(T^{j-1})$, $v \in V^{01}(H_k)$, 
		and $\gamma_R (T) = \gamma_R (T^{j-1}) + \gamma_R (H_k)  - 1$. 		
		Then by Claim \ref{main}.3 we clearly have:
\begin{itemize}
		\item[(a1)] If $h(u) = 2$ then at least one of the following holds: 
		    \begin{itemize}
		         \item[(a1.1)] $h|_{H_k-v}$ is a $\gamma_R$-function on $H_k-v$, and 
						   \item[(a1.2)]  $h|_{H_k-\{v,x\}}$ is a $\gamma_R$-function on $H_k-\{v,x\}$. 
	      \end{itemize}
			\item[(a2)] If $h(u) = 1$ then $h|_{H_k-v}$ is a $\gamma_R$-function on $H_k-v$.
				\item[(a3)] 	If $h(u) = 0$ then	either $h|_{H_k}$ is a $\gamma_R$-function on $H_k$, or 
				                      $h|_{H_k-v}$ is a $\gamma_R$-function on $H_k-v$. 
	\end{itemize}

	Let $l_1$, $l_2$, $l_3$, $l_4$ and $l_5$ be $\gamma_R$-functions on
	$H_k, H_k-v$, $H_k-\{v,x\}$, $T^{j-1} - u$ and $T^{j-1}$, respectively, 
	and let $l_5(u) = 2$.  
	Define the functions $h_1$, $h_2$, and  $h_3$ on $T$ as follows: 
	(i) $h_1|_{T^{j-1}} = l_5$, $h_1(x) = 0$ and $h_1|_{H_k-\{v,x\}} = l_3$, 
	(ii) $h_2|_{T^{j-1}} = l_5$ and $h_1|_{H_k-v} = l_2$, and 
	(iii) $h_3|_{T^{j-1}-u} = l_4$ and $h_3|_{H_k} = l_1$. 
	Clearly $h_1$, $h_2$, and  $h_3$ are $\gamma_R$-functions on $T$.
		After inspection of all $\gamma_R$-functions of  $H_k, H_k-v$  and  $H_k-\{v,x\}$, 
		we conclude that  
		$V^{01}(H_k) - \{v\} \subseteq V^{01}(T)$, $V^{02}(H_k) \subseteq V^{02}(T)$, 
		and $V^{012}(H_k) \subseteq V^{012}(T)$. This and ($\alpha_6$)  imply 
  \begin{itemize}
\item[($\alpha_7$)] $V^{012}(T) = V^{012}(T^{j-1})  \cup V^{012}(H_k)$, $V^{02}(T) = V^{02}(T^{j-1})  \cup V^{02}(H_k)$, 
                        and $V^{01}(T) = V^{01}(T^{j-1})  \cup (V^{01}(H_k) - \{v\})$.
\end{itemize}		
Since  ($\mathcal{P}_1$) holds with  	$T$ replaced by  $H_k$  or by $T^{j-1}$ (by induction hypothesis), 
using  ($\alpha_7$) we obtain that  ($\mathcal{P}_1$) is satisfied. 
\end{proof}

\section{Corollaries}

The next three results immediately follow by Theorem \ref{main}. 

\begin{corollary}\label{unilab}
If $(T,S_1),(T,S_2)\in \mathscr{T}$ then $S_1 \equiv S_2$.
\end{corollary}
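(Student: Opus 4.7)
The plan is to read off the statement as an immediate consequence of property $(\mathcal{P}_1)$ in Theorem \ref{main}. The point is that $(\mathcal{P}_1)$ expresses each status class $S_A, S_B, S_C, S_D$ purely in terms of intrinsic, labeling-free data attached to $T$, namely the members $V^{01}(T)$, $V^{02}(T)$, $V^{012}(T)$ of the $\gamma_R$-partition of $T$ together with the degree and neighborhood structure of $T$.

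More precisely, suppose $(T,S_1),(T,S_2)\in\mathscr{T}$. Applying the ``moreover'' clause of Theorem \ref{main} to each of the two labelings, I obtain
\[
(S_1)_A(T)=V^{01}(T)=(S_2)_A(T),\qquad (S_1)_D(T)=V^{012}(T)=(S_2)_D(T),
\]
and
\[
(S_1)_B(T)=\{x\in V^{02}(T):\deg(x)=2,\ |N(x)\cap V^{02}(T)|=1\}=(S_2)_B(T),
\]
while $(S_1)_C(T)=V^{02}(T)-(S_1)_B(T)=(S_2)_C(T)$. Since these four sets partition $V(T)$ and coincide for $S_1$ and $S_2$, we have $S_1(v)=S_2(v)$ for every $v\in V(T)$, i.e. $S_1\equiv S_2$.

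There is essentially no obstacle here: all the work is already contained in property $(\mathcal{P}_1)$, which was established as part of Theorem \ref{main}. The only thing to note is that $(\mathcal{P}_1)$ must be invoked twice, once per labeling, and that the right-hand sides of the identifying formulas depend only on $T$ (not on $S$), so any labeling that places $T$ into $\mathscr{T}$ is forced to be the same function on $V(T)$.
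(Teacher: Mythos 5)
Your proof is correct and matches the paper's intended argument: the corollary is stated there as an immediate consequence of Theorem \ref{main}, precisely because $(\mathcal{P}_1)$ determines each of $S_A$, $S_B$, $S_C$, $S_D$ by formulas depending only on $T$. Invoking $(\mathcal{P}_1)$ once for each labeling, as you do, is exactly the intended one-line justification.
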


If $(T, S) \in \mathscr{T}$ then we call $S$ the $\mathscr{T}$-{\em labeling} of $T$. 

\begin{corollary} \label{boundextrees}
Let $T$ be a $\gamma_R$-excellent tree of order $n \geq 5$, and $S$ the  $\mathscr{T}$-{\em labeling} of $T$. 
Then $\frac{n}{5} \leq |V^{02}(T)| \leq \frac{2}{3} (n-1)$ and $\frac{4}{5}n \geq |V^-(T)| \geq \frac{1}{3} (n+2)$.
Moreover, 
\begin{itemize}
\item[(i)] $\frac{n}{5} = |V^{02}(T)|$ if and only if $(T, S)$ has a $\mathscr{T}$-sequence  
                  $\tau: (T^1, S^1), \dots, (T^j, S^j)$, such that  $(T^1, S^1) = (F_3, J^3)$ 
                   and if $j \geq 2$,  $(T^{i+1}, S^{i+1})$ can be obtained recursively from 
                  $(T^i, S^i)$ and $(F_3, J^3)$  by operation $O_1$. 
\item[(ii)]   $|V^{02}(T)| \leq \frac{2}{3} (n-1)$ if and only if $(T, S)$ has a $\mathscr{T}$-sequence  
                  $\tau: (T^1, S^1),..,$ \\ $(T^j, S^j)$, such that  $(T^1, S^1) = (H_2, I^2)$ 
                   and if $j \geq 2$,  $(T^{i+1}, S^{i+1})$ can be obtained recursively from 
                  $(T^i, S^i)$ and $(H_2, I^2)$  by operation $O_3$. 
\end{itemize}
\end{corollary}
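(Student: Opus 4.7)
The plan is induction on the length $j$ of a $\mathscr{T}$-sequence $(T^1, S^1), \dots, (T^j, S^j) = (T, S)$. Write $n(T) = |V(T)|$ and $k(T) = |V^{02}(T)|$. By Theorem \ref{main} and $(\mathcal{P}_1)$, $V(T) = V^{01}(T) \cup V^{012}(T) \cup V^{02}(T)$, so $|V^-(T)| = n(T) - k(T)$ and the bounds on $|V^-(T)|$ are simple restatements of the bounds on $|V^{02}(T)|$; I will focus on the latter.

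The first step is to tabulate, using Figures \ref{fig:Fig 1} and \ref{fig:F1234} together with the case analyses in the proof of Theorem \ref{main}, the pair $(n, k)$ for each base case and the contribution $(\Delta n, \Delta k)$ of each operation. The base cases satisfy $(n, k) = (2, 0), (4, 2), (6, 2), (7, 2), (8, 2)$ for $H_1, H_2, H_3, H_4, H_5$. Operation $O_1$ with $F_j$ contributes $(j+2, 1)$ for $j \in \{1, 2, 3\}$, since only the $C$-vertex $x$ of $F_j$ lies in $V^{02}$ of the resulting tree; $O_2$ with $F_4$ contributes $(3, 1)$; and $O_3$ and $O_4$ each coalesce a copy of $H_k$ along a status-$A$ vertex $v$ (which is not in $V^{02}(H_k)$), so $(\Delta n, \Delta k) = (|V(H_k)| - 1, 2)$ where $|V(H_k)| = 4, 6, 7, 8, 8, 9$ for $k = 2, \dots, 7$.

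For the lower bound I set $\psi(T) = 5 k(T) - n(T)$ and prove $\psi(T) \geq 0$. Computation gives $\psi(H_1) = -2$ but $\psi(H_i) \geq 2$ for $i \in \{2, 3, 4, 5\}$, and every operation has $\Delta \psi = 5 \Delta k - \Delta n \geq 0$, with equality exactly when $O_1$ is used with $F_3$. Since $H_1$ has no vertex of status $A$ or $C$, the only operations applicable first from $H_1$ are $O_2$ (with $\Delta \psi = 2$) and $O_4$ with $H_k \in \{H_3, H_4, H_6\}$ (with $\Delta \psi \in \{3, 4, 5\}$); so the $-2$ deficit is always neutralised, and the induction gives $\psi(T) \geq 0$, i.e.\ $|V^{02}(T)| \geq n/5$. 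For the upper bound I use $\phi(T) = 2 n(T) - 3 k(T) - 2$; then $\phi(H_1) = 2, \phi(H_2) = 0, \phi(H_3) = 4, \phi(H_4) = 6, \phi(H_5) = 8$, every operation has $\Delta \phi = 2 \Delta n - 3 \Delta k \geq 0$ with equality iff $O_3$ is used with $H_2$, and induction delivers $\phi(T) \geq 0$, i.e.\ $|V^{02}(T)| \leq \tfrac{2}{3}(n-1)$.

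The equality characterizations follow. For (i), $\psi(T) = 0$ forces the base case $H_1$ (the remaining base cases already give $\psi > 0$, and all $\Delta \psi \geq 0$); from $H_1$ only $O_2$ (contributing $+2$) and the $O_4$-moves (each contributing more than $2$) are available, so the first operation must be $O_2$; the tree so produced is, up to isomorphism and labeling, $H_{11}$, which coincides with $F_3$; every subsequent operation must then contribute $\Delta \psi = 0$, forcing $O_1$ with $F_3$. The argument for (ii) is parallel: $\phi(T) = 0$ forces the base case $H_2$ (the unique one with $\phi_0 = 0$) and every subsequent operation to be $O_3$ with $H_2$. The main obstacle is the clean bookkeeping of the table of sizes and contributions; once those are justified by the figures and by the status-invariants established in the proof of Theorem \ref{main}, the rest is elementary arithmetic.
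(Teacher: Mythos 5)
Your proof is correct and takes the route the paper intends: the corollary is stated as an immediate consequence of Theorem \ref{main} with no proof given, and your potential-function bookkeeping along a $\mathscr{T}$-sequence (with $\psi = 5k-n$ and $\phi = 2n-3k-2$, whose base values and per-operation increments all check out against the figures and the status-invariance established in the proof of Theorem \ref{main}) supplies exactly the missing details, including the correct reading of item (ii) as an equality and the identification of $H_{11}$ with $(F_3,J^3)$ needed to phrase the extremal sequence in (i). The only cosmetic caveat is that the lower bound fails precisely for $H_1=K_2$, which the hypothesis $n\ge 5$ excludes and which your argument handles implicitly by noting that at least one operation must follow the base case $H_1$.
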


\begin{corollary} \label{minedge}
Let $G$ be an $n$-order  $\gamma_R$-excellent  connected graph of minimum size. 
Then either $G=K_3$ or $n \not= 3$ and $G$ is a tree.
\end{corollary}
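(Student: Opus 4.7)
The plan is to separate according to whether $n=3$ or not, exploiting the fact that every connected graph of order $n$ has at least $n-1$ edges, with equality precisely when it is a tree. The corollary then reduces to the question: for which $n$ does a $\gamma_R$-excellent tree of order $n$ exist? If one exists, the $(n-1)$-edge bound is tight and any minimum-size excellent connected graph must be such a tree; if none exists, the minimum must be realised by a graph with at least $n$ edges.

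First I would handle $n = 3$. The only connected graphs of order $3$ are $P_3$ and $K_3$. A direct check gives $\gamma_R(P_3) = 2$, and $(0,2,0)$ is the only $\gamma_R$-function on $P_3$, so both leaves lie in $V^0(P_3)$ and $P_3$ is not $\gamma_R$-excellent. On the other hand $K_3$ is vertex-transitive, hence $\gamma_R$-excellent (and has three edges). Thus for $n=3$ the minimum-size excellent connected graph is forced to be $G = K_3$, and the tree alternative fails.

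For $n \neq 3$ I would exhibit a $\gamma_R$-excellent tree of order $n$; once done, a minimum-size excellent connected graph must have exactly $n-1$ edges and therefore be a tree. The cases $n = 1$ and $n = 2$ are trivial (take $K_1$ and $K_2$). For $n \geq 4$ I would invoke Theorem \ref{main}: starting from $(H_1, I^1)$ (order $2$) and iterating the operations $O_1, O_2, O_3, O_4$, the order grows in each step by $|V(F_a)|$ (operation $O_1$), by $|V(F_4)|$ (operation $O_2$), or by $|V(H_k)| - 1$ (operations $O_3, O_4$). A direct inspection of Figures \ref{fig:Fig 1} and \ref{fig:F1234} shows that these increments include both $2$ and $3$, so combining them one can reach every $n \geq 4$ starting from the base of order $2$, while order $3$ is unreachable (matching the $n=3$ exception).

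The main obstacle is exactly this reachability check: verifying that the available increments generate every integer $\geq 4$ starting from $2$, with no gaps. If a more elementary route is preferred, one can bypass the numerical bookkeeping by giving explicit excellent trees for the smallest few orders (for instance $P_4$ for $n=4$, $P_5$ for $n=5$, and the double star $S_{2,2}$ for $n=6$) and then, for each larger $n$, producing an order-$n$ example by attaching a suitable $H_k$ via operation $O_3$ to a previously constructed excellent tree of smaller order. Either way, the conclusion then follows at once from the $(n-1)$-edge bound.
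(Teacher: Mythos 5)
Your overall strategy is exactly what the paper intends (it offers no written proof, dismissing the corollary as an immediate consequence of Theorem \ref{main}): a connected $n$-vertex graph has at least $n-1$ edges with equality precisely for trees, so everything reduces to deciding for which $n$ a $\gamma_R$-excellent tree of order $n$ exists; your treatment of $n=3$ ($P_3$ has the unique $\gamma_R$-function $(\{a,c\};\emptyset;\{b\})$, while $K_3$ is vertex-transitive) is correct and complete. The only real flaw is in your reachability bookkeeping: \emph{no} operation adds exactly $2$ vertices. Operations $O_1$ and $O_2$ add $|V(F_a)|\in\{3,4,5\}$ and $|V(F_4)|=3$ vertices respectively, and $O_3$, $O_4$ add $|V(H_k)|-1\ge 3$, so the claim that ``the increments include both $2$ and $3$'' is false. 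What actually closes the gaps is that the admissible base trees already realize orders $2,4,5,6,7,8$ (namely $H_1, H_2, H_{11}, H_3, H_4, H_5$), after which a $+3$ step is always available: any $(T,S)\in\mathscr{T}$ with $|S_B\cup S_C|\ge 1$ has either a status-$C$ vertex (apply $O_1$ with $F_1$) or a status-$B$ vertex, which by $(\mathcal{P}_2)$ is adjacent to a status-$A$ vertex (apply $O_3$ with $H_2$). Your fallback route needs the same repair at $n=5$: the unique order-$5$ tree in $\mathscr{T}$ is $P_5=H_{11}$ with status pattern $D,D,C,D,D$, which has no status-$A$ vertex, so ``attach a suitable $H_k$ via $O_3$'' cannot be applied to it; use $O_1$ with $F_1$ or $O_2$ with $F_4$ there instead. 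With these adjustments the argument is sound.
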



\section{Special cases}\label{aaa}
Let $G$  be a graph and  $\{a_1,..,a_k\} \subseteq \{0, 1,2,01,02,12,012\}$.  
We say that  $G$ is  a $\mathcal{R}_{a_1,..,a_k}$-graph if $V(G) = \cup_{i=1}^kV^{a_i}(G)$ and 
all $V^{a_1}(G),..,V^{a_k}(G)$ are nonempty. Now let $T$ be a $\gamma_R$-excellent tree of order at least $2$.  
By Theorem \ref{main}, we immediately conclude that 
$T \in \mathcal{R}_{012} \cup \mathcal{R}_{01, 02} \cup \mathcal{R}_{02, 012} \cup \mathcal{R}_{01, 02, 012}$. 
Moreover, 
\begin{itemize}
\item[(i)] $T \in \mathcal{R}_{012}$ if and only if $T = K_2$, and 
\item[(ii)] $T \in \mathcal{R}_{01, 02, 012}$ if and only if none of $S_A(T),  S_C(T)$ and 
                    $S_D(T)$ is empty, where $S$ is the $\mathscr{T}$-labeling of $T$. 
\end{itemize}
In this section, we turn our attention to the classes $\mathcal{R}_{01, 02}$ and $\mathcal{R}_{02, 012}$. 
\medskip

\subsection{$\mathbf{\mathcal{R}_{01, 02}}$-graphs.}

Here  we give necessary and sufficient conditions for a tree to be in $\mathcal{R}_{01,02}$. 
We  define a subfamily $\mathscr{T}_{01,02}$ of $\mathscr{T}$ as follows. 
A labeled tree  $(T, S) \in \mathscr{T}_{01,02} $
if and only if $(T, S)$  can be obtained from a sequence of labeled trees 
$\tau: (T^1, S^1), \dots, (T^j, S^j)$, ($j \geq 1$),  such that  $(T^1, S^1)$ is in 
$\{(H_2, I^2), (H_3, I^3)\}$ (see  Figure \ref{fig:Fig 1}) and $(T, S) = (T^j, S^j)$, 
and, if $j \geq 2$,  $(T^{i+1}, S^{i+1})$ can be obtained recursively from $(T^i, S^i)$ 
 by one of the  operations $O_5$ and $O_6$ listed below; 
 in this case  $\tau$ is said to be  a  $\mathscr{T}_{01,02}$-{\em sequence} of $T$. 
\medskip
 \\
{\bf Operation}  {$O_5$}.  The labeled tree $(T^{i+1},S^{i+1})$ is obtained from 
$(T^i,S^i)$ and $(F_1, J^1)$ (see  Figure \ref{fig:F1234})  by adding
 the edge $ux$, where $u \in V (T_i)$, $x \in V(F_1)$ and  $sta_{T^i}(u) = sta_{F_1}(x) = C$.
\medskip
 \\
{\bf Operation}   {$O_6$}.   The labeled tree $(T^{i+1},S^{i+1})$ is obtained from 
 $(T^i,S^i)$ and $(H_k, I^k)$, $k \in \{2,3\}$ (see  Figure \ref{fig:Fig 1}),   
in such a way that  $T^{i+1}= (T^i \cdot H_k)(u,v:u)$, 
where $sta_{T^i}(u) = sta_{H_k}(v) = A$, and $sta_{T^{i+1}}(u) = A$.
\medskip

Remark that once a vertex is assigned a status, 
this status remains unchanged as the labeled tree $(T, S)$ is recursively constructed. 
By the above definitions we see that $S_D(T)$ is empty when $(T,S) \in \mathscr{T}_{01,02}$.
So, in this case, it is naturally to consider a labeling $S$ as $S:V (T ) \rightarrow\{A,B,C\}$.   
 From Theorem \ref{main} we immediately obtain the following result. 
\begin{corollary}\label{0102c}
Let $T$ be a  tree of order at least $2$.
Then $T \in \mathcal{R}_{01,02}$ if and only if there is a labeling
$S:V (T ) \rightarrow\{A,B,C\}$ such that  $(T, S)$ is in $\mathscr{T}_{01,02} $.
Moreover, if $(T, S) \in \mathscr{T}_{01,02}$ then 
	\begin{itemize}
	\item[($\mathcal{P}_3$)] 	 $S_B(T) = \{x \in V^{02}(T) \mid deg(x) =2 $ and $|N(x) \cap V^{02}(T)| =1\}$, 
	$S_A(T) = V^{01}(T)$, and $S_C(T) = V^{02}(T) - S_B(T)$.  		
\end{itemize}
\end{corollary}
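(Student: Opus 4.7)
The plan is to derive Corollary \ref{0102c} from Theorem \ref{main} by identifying the sub-family of $\mathscr{T}$ consisting exactly of those labeled trees $(T,S)$ with $S_D(T)=\emptyset$. I observe first that, by the very definition of $\mathcal{R}_{a_1,\dots,a_k}$-graphs, $T\in\mathcal{R}_{01,02}$ is equivalent to $T$ being $\gamma_R$-excellent together with $V^{012}(T)=\emptyset$ and both $V^{01}(T),V^{02}(T)$ nonempty. By Theorem \ref{main}, the $\gamma_R$-excellence of $T$ provides a labeling $S$ with $(T,S)\in\mathscr{T}$, and property $(\mathcal{P}_1)$ translates $V^{012}(T)=\emptyset$ into $S_D(T)=\emptyset$. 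So the whole corollary reduces to proving that a labeled tree $(T,S)\in\mathscr{T}$ has $S_D(T)=\emptyset$ if and only if $(T,S)\in\mathscr{T}_{01,02}$.

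For the ``if'' direction I simply follow the construction: the two permitted base trees $(H_2,I^2),(H_3,I^3)$ contain no vertex of status $D$; operation $O_5$ attaches a copy of $F_1$, which has no $D$-vertex; and operation $O_6$ glues a copy of $H_2$ or $H_3$ along an $A$--$A$ coalescence, so again no $D$-vertex is introduced. Since statuses are never modified during the recursive process, $S_D(T)=\emptyset$ at every step and in particular at the end.

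For the ``only if'' direction I take any $\mathscr{T}$-sequence $\tau:(T^1,S^1),\dots,(T^j,S^j)=(T,S)$ of $T$ and use the permanence of statuses in the opposite direction. The base $(T^1,S^1)$ must have no $D$-vertex, so it cannot be $(H_1,I^1)$, $(H_4,I^4)$ or $(H_5,I^5)$; it must be $(H_2,I^2)$ or $(H_3,I^3)$. No step in $\tau$ can use the gadgets $F_2,F_3,F_4$ or the trees $H_4,H_5,H_6,H_7$, since each of these contains a $D$-vertex that would survive untouched to $(T,S)$. Nor can operations $O_2$ or $O_4$ be used, because each of them requires a vertex of status $D$ in $T^i$, which does not exist. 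Hence every step uses either $O_1$ with $F_1$ (that is, $O_5$) or $O_3$ with $H_2$ or $H_3$ (that is, $O_6$), so $\tau$ is a $\mathscr{T}_{01,02}$-sequence. Once the equivalence $\mathscr{T}_{01,02}=\{(T,S)\in\mathscr{T}\mid S_D(T)=\emptyset\}$ is established, property $(\mathcal{P}_3)$ is merely the restriction of $(\mathcal{P}_1)$ to this case, since the equalities for $S_A,S_B,S_C$ are transferred verbatim from Theorem \ref{main} and the clause for $S_D$ becomes vacuous.

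The only non-mechanical input is the visual bookkeeping --- an inspection of Figures \ref{fig:Fig 1} and \ref{fig:F1234} to catalogue which of the basic configurations $H_1,\dots,H_5$ and $F_1,\dots,F_4$ contain a $\circ$-vertex. I expect this step to be routine but it is the one place where care is genuinely needed; the rest of the argument is an almost immediate corollary of Theorem \ref{main}.
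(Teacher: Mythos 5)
Your argument is correct and follows exactly the route the paper intends: the paper offers no proof of this corollary beyond the remark that it is "immediate from Theorem \ref{main}", and your reduction to the identity $\mathscr{T}_{01,02}=\{(T,S)\in\mathscr{T}\mid S_D(T)=\emptyset\}$ (via the status-persistence remark and the catalogue of which pieces $H_i$, $F_i$ contain a $\circ$-vertex) is precisely the omitted bookkeeping. The only detail left tacit is that $S_A(T)$ and $S_B(T)\cup S_C(T)$ are automatically nonempty because the base trees $H_2,H_3$ already contain vertices of both kinds, which settles the nonemptiness clauses in the definition of $\mathcal{R}_{01,02}$.
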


As un immediate consequence of Corollary \ref{unilab} we obtain: 
\begin{corollary}\label{unilab1}
If $(T,S_1),(T,S_2)\in \mathscr{T}_{01,02}$ then $S_1 \equiv S_2$.
\end{corollary}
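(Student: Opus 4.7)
The plan is to deduce Corollary \ref{unilab1} directly from Corollary \ref{unilab} by checking that $\mathscr{T}_{01,02}$ is a subfamily of $\mathscr{T}$. I will verify this inclusion by comparing the two recursive definitions step by step.

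First I would observe that the base labeled trees for $\mathscr{T}_{01,02}$, namely $(H_2, I^2)$ and $(H_3, I^3)$, both belong to the set $\{(H_1, I^1), \dots, (H_5, I^5)\}$ that serves as the base of $\mathscr{T}$. Next I would note that operation $O_5$ is simply operation $O_1$ with the attached labeled tree restricted to $(F_1, J^1) \in \{(F_1, J^1), (F_2, J^2), (F_3, J^3)\}$; similarly, operation $O_6$ is operation $O_3$ with the attached labeled tree restricted to $(H_k, I^k)$ for $k \in \{2,3\} \subseteq \{2,\dots,7\}$. Thus every $\mathscr{T}_{01,02}$-sequence is ipso facto a $\mathscr{T}$-sequence, which yields the inclusion $\mathscr{T}_{01,02} \subseteq \mathscr{T}$. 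The only minor bookkeeping is that $\mathscr{T}_{01,02}$-labelings take values in $\{A,B,C\}$; but such a labeling is trivially viewed as a $\{A,B,C,D\}$-valued labeling with $S_D(T) = \emptyset$, since neither the base trees $H_2, H_3$ nor the operations $O_5, O_6$ ever introduce a vertex of status $D$.

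Given this inclusion, suppose $(T, S_1), (T, S_2) \in \mathscr{T}_{01,02}$. Then $(T, S_1)$ and $(T, S_2)$ both lie in $\mathscr{T}$, so applying Corollary \ref{unilab} immediately yields $S_1 \equiv S_2$, as required.

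There is no real obstacle; the content of the argument is entirely in the definitional inclusion $\mathscr{T}_{01,02} \subseteq \mathscr{T}$, after which Corollary \ref{unilab} does all the work. The only point demanding any attention is the codomain mismatch between the two labelings, which is handled by the observation that status $D$ never arises in the $\mathscr{T}_{01,02}$ construction.
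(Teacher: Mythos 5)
Your proof is correct and follows exactly the paper's route: the paper likewise derives this as an immediate consequence of Corollary \ref{unilab}, relying on the fact that $\mathscr{T}_{01,02}$ is by construction a subfamily of $\mathscr{T}$ (base trees $(H_2,I^2),(H_3,I^3)$ and operations $O_5$, $O_6$ being special cases of the base set and of $O_1$, $O_3$). Your explicit verification of this inclusion, including the remark that status $D$ never arises, is exactly the intended justification.
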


A  graph $G$ is called a $\overline{K_2}$-{\em corona} if  
each  vertex of $G$ is either a stem or a leaf, and  each stem of $G$ is adjacent to exactly $2$ leaves. 
In a {\em labeled} $\overline{K_2}$-{\em corona} all  leaves have status $A$ and all  stems have status $C$. 
   
\begin{proposition} \label{forb}
Every connected $n$-order graph $H$, $n \geq 2$, is an induced subgraph of a $\mathcal{R}_{01,02}$-graph 
with the domination number equals to $2|V(H)|$. 
\end{proposition}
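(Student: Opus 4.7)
The plan is to take $G = H \circ 2K_1$, the graph obtained from $H$ by attaching, for each $v \in V(H)$, two new pendant vertices $a_v, b_v$ adjacent only to $v$. Then $H$ is the induced subgraph on the stems of $G$, and $G$ is the $\overline{K_2}$-corona whose stem-subgraph is $H$. (I read ``domination number'' in the statement as $\gamma_R$, since the class $\mathcal{R}_{01,02}$ is defined through the $\gamma_R$-partition.)

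First I would establish $\gamma_R(G) = 2n$ with $n = |V(H)|$. The upper bound is witnessed by the RDF $f_0$ assigning $2$ to every stem and $0$ to every leaf. For the matching lower bound I would partition $V(G)$ into the $n$ triples $T_v = \{v, a_v, b_v\}$ and show $f(T_v) \geq 2$ for every RDF $f$: if $f(v) = 2$ this is immediate, and otherwise each of $a_v, b_v$ has $v$ as its only neighbor with $f(v) \neq 2$, so each must carry $f \geq 1$; summing over $v$ yields $\gamma_R(G) \geq 2n$.

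Next I would determine the $\gamma_R$-partition of $G$. For an arbitrary stem $v$, $f_0$ provides a $\gamma_R$-function with $f(v) = 2$. Since $H$ is connected and $n \geq 2$, $v$ has some neighbor $u \in V(H)$, and the RDF $f$ defined by $f(v) = 0$, $f(a_v) = f(b_v) = 1$, $f(w) = 2$ for every $w \in V(H) \setminus \{v\}$, and $f = 0$ on all remaining leaves has weight $0+1+1+2(n-1) = 2n$, so it is a $\gamma_R$-function with $f(v) = 0$. The value $f(v) = 1$ is impossible in any $\gamma_R$-function, because it would force $f(a_v), f(b_v) \geq 1$ and hence $f(T_v) \geq 3$, pushing the total weight to at least $2n+1$. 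Therefore $v \in V^{02}(G)$. For an arbitrary leaf $a = a_v$, the function $f_0$ supplies $f(a) = 0$, and the RDF just constructed supplies $f(a) = 1$; the value $f(a) = 2$ is excluded by the same triple analysis, since inspecting $f(v) \in \{0,1,2\}$ shows $f(T_v) \geq 3$ in every case (the key point being that $b_v$ forces $f(b_v) \geq 1$ whenever $f(v) \neq 2$). Hence $a \in V^{01}(G)$. Since $V^{01}(G)$ and $V^{02}(G)$ are each nonempty (there are $2n \geq 4$ leaves and $n \geq 2$ stems) and together exhaust $V(G)$, $G$ is an $\mathcal{R}_{01,02}$-graph with $\gamma_R(G) = 2n$ containing $H$ as an induced subgraph.

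The main — and essentially only — obstacle is the triple-weight case analysis that drives each of the three claims above; it is entirely routine, exploiting the fact that the two pendants $a_v, b_v$ communicate with the rest of $G$ only through $v$.
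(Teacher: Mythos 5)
Your proof is correct and follows essentially the same route as the paper: the same $\overline{K_2}$-corona construction with $H$ induced on the stems, the same two witness $\gamma_R$-functions, and the same local analysis of each triple $\{v,a_v,b_v\}$ to pin down the admissible values (your write-up just makes the lower bound and the exclusion of the values $1$ at stems and $2$ at leaves more explicit). Your reading of ``domination number'' as the Roman domination number matches the paper's intent, as its proof concludes with $\gamma_R(G)=2|V(H)|$.
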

\begin{proof}
Let  a graph $G$ be  a $\overline{K_2}$-corona such that 
 the induced subgraph by the set of all stems of $G$ is isomorphic to $H$. 
  Let $x$ be a stem of $G$ and $y,z$ the leaf neighbors of $x$ in $G$. 
	Then clearly for any $\gamma_R$-function $f$ on $G$, $f(x) + f(y) + f(z) \geq 2$, 
	$f(y) \not= 2 \not= f(z)$ and $f(x) \not= 1$.  
	Define RD-functions $h$ and $g$ on $G$ as follows: 
	(a) $h(u) = 2$ when $u$ is a stem of $G$ and $h(u) = 0$, otherwise, and 
	(b) $g(v) = h(v)$ when $v \not\in \{x, y, z\}$, and $g(x) = 0$, $g(y) = g(z) = 1$. 
	Therefore $\gamma_R(G) = 2|V(H)|$ and $G$ is in $\mathcal{R}_{01,02}$. 
\end{proof}
	
\begin{corollary}\label{forb1}
There does not exist a forbidden subgraph characterization of the class of $\mathcal{R}_{01,02}$-graphs. 
There does not exist a forbidden subgraph characterization of the class of $\gamma_R$-excellent graphs.
\end{corollary}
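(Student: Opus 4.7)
The plan is to show both classes are not closed under taking induced subgraphs, which any forbidden induced subgraph characterization would force. The key observation is standard: if a class $\mathcal{C}$ of graphs is characterized by a family $\mathcal{F}$ of forbidden induced subgraphs, then $\mathcal{C}$ is hereditary, because if $G \in \mathcal{C}$ contains no member of $\mathcal{F}$ as an induced subgraph and $H$ is an induced subgraph of $G$, then $H$ also contains no member of $\mathcal{F}$ as an induced subgraph, so $H \in \mathcal{C}$. So it suffices to produce, in each case, a graph in the class together with an induced subgraph that is outside the class.

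First I would exhibit a small connected graph $H_0$ which is simultaneously outside $\mathcal{R}_{01,02}$ and outside the class of $\gamma_R$-excellent graphs. The natural candidate is $H_0 = P_3$: the unique $\gamma_R$-function on $P_3$ puts weight $2$ on the center and $0$ on the two leaves, so both leaves lie in $V^0(P_3)$. Hence $V^0(P_3) \neq \emptyset$, so $P_3$ is not $\gamma_R$-excellent, and in particular $V(P_3) \neq V^{01}(P_3) \cup V^{02}(P_3)$, so $P_3 \notin \mathcal{R}_{01,02}$.

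Now I would invoke Proposition \ref{forb}: since $P_3$ is a connected graph of order $3 \geq 2$, it is an induced subgraph of some $G \in \mathcal{R}_{01,02}$. By the definition of $\mathcal{R}_{01,02}$, $G$ has no vertex in $V^0(G)$, so $G$ is also $\gamma_R$-excellent. Thus $G$ belongs to each of the two classes under consideration, but its induced subgraph $P_3$ belongs to neither. By the hereditary-class observation above, neither class admits a forbidden induced subgraph characterization.

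The argument is very short; the only step that required a moment of thought was selecting a concrete connected graph outside both classes, and here Proposition \ref{forb} does essentially all the work, so there is no real obstacle. If the intended meaning of ``forbidden subgraph'' were the weaker notion of \emph{subgraph} rather than induced subgraph, the same $P_3$-inside-a-$\overline{K_2}$-corona example still works, since a $\overline{K_2}$-corona built on any connected $H$ containing $P_3$ (for instance, on $H = P_3$ itself) contains $P_3$ as a subgraph while being in $\mathcal{R}_{01,02}$, so both variants of the characterization are ruled out.
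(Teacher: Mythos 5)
Your proof is correct and follows the paper's intended route: the corollary is stated there as an immediate consequence of Proposition \ref{forb}, precisely via the observation that a forbidden-subgraph class must be hereditary while Proposition \ref{forb} embeds every connected graph of order at least $2$ (e.g.\ $P_3$, which is neither $\gamma_R$-excellent nor in $\mathcal{R}_{01,02}$) as an induced subgraph of an $\mathcal{R}_{01,02}$-graph, which is in particular $\gamma_R$-excellent. Your explicit choice of witness and the remark covering the non-induced variant are just a fuller write-up of the same argument.
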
	

Let $\mathscr{T}_{01,02} ^\prime$ be the family of all labeled trees $(T, L)$
 that can be obtained from a sequence of labeled trees 
$\lambda: (T^1, L^1), \dots, (T^j, L^j)$, ($j \geq 1$),  such that  $(T, L) = (T^j, L^j)$, 
 $(T^1, L^1)$ is either $(H_2, I^2)$ (see  Figure \ref{fig:Fig 1})  or a labeled  $\overline{K_2}$-corona tree, 
and, if $j \geq 2$,  $(T^{i+1}, L^ {i+1})$ can be obtained recursively from $(T^i, L^i)$ 
 by one of the  operations $O_7$ and $O_8$ listed below;
in this case  $\lambda$ is said to be  a  $\mathscr{T}_{01,02} ^\prime$-{\em sequence}   of $T$. 
\medskip
 \\
{\bf Operation}   {$O_7$}.   The labeled tree $(T^{i+1},L^{i+1})$ is obtained from 
 $(T^i,L^i)$ and $(H_2, I^2)$,   in such a way that  $T^{i+1}= (T^i \cdot H_2)(u,v:u)$, 
where $sta_{T^i}(u) = sta_{H_2}(v) = A$, and $sta_{T^{i+1}}(u) = A$.
\medskip
\\
{\bf Operation}   {$O_8$}.   The labeled tree $(T^{i+1},L^{i+1})$ is obtained from 
 $(T^i,L^i)$ and  a labeled  $\overline{K_2}$-corona tree, say $U_i$, 
 in such a way that  $T^{i+1}= (T^i \cdot U_i)(u,v:u)$, 
where $sta_{T^i}(u) = sta_{U_i}(v) = A$, and $sta_{T^{i+1}}(u) = A$.
\medskip
 
Again, once a vertex  is assigned a status, 
this status remains unchanged as the $2$-labeled tree $T$ is recursively constructed.

\begin{theorem}\label{main1}
For any tree $T$ the following are equivalent.
\begin{itemize}
\item[{\ ($A_1$)}]   $T$ is in $\mathcal{R}_{01,02}$. 
\item[{\ ($A_2$)}]  There is a labeling $S:V (T ) \rightarrow\{A,B,C\}$ such that  $(T, S)$ is in $\mathscr{T}_{01,02} $.
\item[{\ ($A_3$)}]  There is a labeling $L:V (T ) \rightarrow\{A,B,C\}$ such that  $(T,L)$ is in $\mathscr{T}_{01,02}^\prime $.
\end{itemize}
\end{theorem}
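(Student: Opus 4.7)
The plan is to close the cycle $(A_1) \Rightarrow (A_2) \Rightarrow (A_3) \Rightarrow (A_1)$. The implication $(A_1) \Rightarrow (A_2)$ is immediate from Corollary \ref{0102c}, so only the other two directions require work.

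For $(A_3) \Rightarrow (A_1)$ I induct on the length of a $\mathscr{T}_{01,02}^\prime$-sequence $\lambda$ for $(T, L)$. The base cases are $(H_2, I^2)$ and labeled $\overline{K_2}$-corona trees with at least two stems (the single-stem case $P_3$ is not in $\mathcal{R}_{01,02}$ since $P_3$ is not $\gamma_R$-excellent, and must be read as implicitly excluded from the base list). For $H_2$, direct inspection gives $V(H_2) = V^{01}(H_2) \cup V^{02}(H_2)$ with both sides nonempty. For a labeled $\overline{K_2}$-corona tree, the two $\gamma_R$-functions $h$ and $g$ constructed in the proof of Proposition \ref{forb} supply an $02$-witness and an $01$-witness at every vertex, so each vertex lies in $V^{01} \cup V^{02}$. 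For the inductive step, both $O_7$ and $O_8$ coalesce the two factors at a pair of status-$A$ vertices, which by the inductive hypothesis together with $(\mathcal{P}_3)$ lie in $V^{01}$ of each factor. Proposition \ref{coales12} then yields $V^{01}(T) = V^{01}(T^i) \cup V^{01}(U)$ and $V^{02}(T) = V^{02}(T^i) \cup V^{02}(U)$ with no other vertex-classes arising, so $T \in \mathcal{R}_{01,02}$.

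For $(A_2) \Rightarrow (A_3)$ I induct on the length $j$ of a $\mathscr{T}_{01,02}$-sequence $\sigma$ for $(T, S)$. The bases $H_2$ and $H_3$ are already starting trees of $\mathscr{T}_{01,02}^\prime$, since $H_3$ is precisely the two-stem labeled $\overline{K_2}$-corona. If the final operation is $O_6$ with $H_k$, then $k = 2$ makes it exactly $O_7$, while $k = 3$ makes it $O_8$ with $U_i = H_3$. The genuinely nontrivial case is when the final operation is $O_5$ (attaching $F_1$ through a new $C$--$C$ edge), since $\mathscr{T}_{01,02}^\prime$ glues only at $A$-vertices. My strategy is to identify the maximal $\overline{K_2}$-corona subtree $\mathscr{K}$ of $T$ containing the newly added $C$--$C$ edge, excise it, and reattach it by a single $O_8$-step. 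Using $(\mathcal{P}_3)$ and Lemma \ref{adj}(ii)--(iii), I argue that $\mathscr{K}$ is a bona fide labeled $\overline{K_2}$-corona and is joined to the rest of $T$ through a unique $A$-cut-vertex (or $\mathscr{K} = T$, which is already a starting tree of $\mathscr{T}_{01,02}^\prime$). Removing from $\sigma$ the $O_6$-with-$H_3$ step that created $\mathscr{K}$ (or the initial placement of $H_3$, if $\mathscr{K}$ was the first block) together with all subsequent $O_5$-extensions of $\mathscr{K}$ produces a shorter $\mathscr{T}_{01,02}$-sequence for the residual tree; by induction the residual tree lies in $\mathscr{T}_{01,02}^\prime$, and reattaching $\mathscr{K}$ through the $A$-cut-vertex by one $O_8$-step gives a $\mathscr{T}_{01,02}^\prime$-sequence for $T$.

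The main obstacle is formalizing this excision: one must show that the various $O_5$-extensions originating from the same $H_3$-block are canonically identifiable in the final tree $T$, that the resulting $\mathscr{K}$ meets the rest of $T$ in a single $A$-cut-vertex, and that deleting the corresponding operations actually yields a valid shorter $\mathscr{T}_{01,02}$-sequence. The decisive combinatorial inputs will be $(\mathcal{P}_3)$ (which pins down statuses), Lemma \ref{adj}(ii) and (iii) (which constrain neighbors of $V^{02}$-vertices and force $V^{01}$ to be independent), and the observation that every $O_5$-step strictly extends a pre-existing $\overline{K_2}$-corona substructure rooted at a status-$C$ vertex, so the block structure of $O_5$-chains is determined intrinsically by $T$.
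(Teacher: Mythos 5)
Your cycle $(A_1)\Rightarrow(A_2)\Rightarrow(A_3)\Rightarrow(A_1)$ is a reasonable architecture, and your $(A_3)\Rightarrow(A_1)$ leg is a legitimate variant of the paper's argument: the paper instead proves $(A_3)\Rightarrow(A_2)$ by explicitly splicing a $\mathscr{T}_{01,02}$-sequence of each $\overline{K_2}$-corona block (its Claim 7.1) into the sequence of the smaller tree, whereas you go straight to membership in $\mathcal{R}_{01,02}$ via Proposition \ref{coales12}(iii). That works, provided you strengthen the induction hypothesis to say that the labeling $L^i$ carried by the $\mathscr{T}_{01,02}^\prime$-sequence actually satisfies $(\mathcal{P}_3)$ for $T^i$ (so that the status-$A$ coalescence vertex is genuinely in $V^{01}(T^i)$); Corollary \ref{0102c} alone only gives you \emph{some} labeling with that property. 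Your remark that the one-stem corona $P_3$ must be excluded from the base list is correct and consistent with the paper's intent.

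The genuine gap is in $(A_2)\Rightarrow(A_3)$, in the case where the last operation of the $\mathscr{T}_{01,02}$-sequence is $O_5$. Your plan rests on the claim that the maximal labeled $\overline{K_2}$-corona subtree $\mathscr{K}$ containing the newly added $C$--$C$ edge ``is joined to the rest of $T$ through a unique $A$-cut-vertex,'' so that it can be excised and reattached by one $O_8$-step. This is false. Take $T^1=H_3$ with stems $c_1,c_2$ and leaves $a_1,a_2$ (of $c_1$) and $a_3,a_4$ (of $c_2$); apply $O_6$ to glue a copy of $H_2$ at $a_1$, apply $O_6$ again to glue a copy of $H_2$ at $a_3$, and finally apply $O_5$ to attach $F_1$ to $c_1$. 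This is a valid $\mathscr{T}_{01,02}$-sequence ending in $O_5$, but the block $\mathscr{K}$ (the three-stem corona) meets the rest of $T$ in \emph{two} $A$-cut-vertices, $a_1$ and $a_3$; moreover, deleting from $\sigma$ the initial $H_3$ and its $O_5$-extension leaves two disconnected copies of $H_2$, which is not a $\mathscr{T}_{01,02}$-sequence of anything. So neither the excision nor the single $O_8$ reattachment is available. The repair is essentially the paper's: do not peel off the block containing the most recently added material, but instead induct on $p(T)=\Sigma_{z\in\mathtt{C}(T)}\deg_T(z)$ (summed over $A$-labeled cut-vertices) and peel off a \emph{pendant} block $T''$, i.e.\ one chosen so that no vertex of $S_A(T)\cap V(T'')$ is a cut-vertex of $T''$; such a block really does attach at a single $A$-vertex and is either $H_2$ or a corona, and the residual tree has strictly smaller $p$. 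Alternatively you would have to prove a commutation lemma letting you reorder the $\mathscr{T}_{01,02}$-sequence so that its last operation is $O_6$ whenever $T$ is not itself a corona, which is not addressed in your proposal.
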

\begin{proof}[\bf Proof.]
($A_1$) $\Leftrightarrow$ ($A_2$): By Corollary \ref{0102c}. 
\medskip

($A_3$) $\Rightarrow$ ($A_2$): 

  Let a tree  $(T, L) \in \mathscr{T}_{01,02} ^\prime$. 
	It is clear that all $\mathscr{T}_{01,02} ^\prime$-sequences of $(T, L)$ have the same number of elements. 
	Denote this number by $r(T)$. 		
	We shall prove that $(T, L) \in \mathscr{T}_{01,02} ^\prime   \Rightarrow (T, L) \in \mathscr{T}_{01,02}$.
	We proceed by induction on $r(T)$. 	
		 If $r(T)=1$ then    either $(T, L)$ is  a labeled  $\overline{K_2}$-corona tree, 
		or $(T, L) = (H_2, I^2)$. In both cases $(T, L)  \in \mathscr{T}_{01,02}$. We need the following obvius claim. 
	\medskip
			
{\bf Claim \ref{main1}.1}			
			If $(T^\prime, L^\prime)$ is a labeled  $\overline{K_2}$-corona tree, 
			$w \in V(T^\prime)$ and $sta (w) = A$, then 		
		 either $(T^\prime, L^\prime)$ is $(H_3, I^3)$ or there is 
 a  $\mathscr{T}$-sequence $\tau: (T^1, S^1), \dots, (T^l, S^l)$, ($l \geq 2$),  
		such that  $(T^1, S^1) = (H_3, I^3)$,  $w \in V(T^1)$, $(T^l, S^l) = (T^\prime, L^\prime)$, and   
		$(T^{i+1}, S^{i+1})$ can be obtained recursively from $(T^i, S^i)$ and $(F_1, J^1)$ 
    by   operation $O_5$. 
				\medskip
									
				Suppose now that  each tree $(H, L_H) \in \mathscr{T}_{01,02}^\prime$ with 
$r(H)  <k$ is in $\mathscr{T}_{01,02}$, where $k \geq 2$. 
Let $\lambda: (T^1, L^1), \dots, (T^k, L^k)$, be  a  $\mathscr{T}_{01,02} ^\prime$-sequence
of a labeled tree $(T, L) \in \mathscr{T}_{01,02}^\prime$. 
By the induction hypothesis, $(T^{k-1}, L^{k-1})$ is in $\mathscr{T}_{01,02}$. 
Let $\tau_1: (U^1, S^1), \dots, (U^m, S^m)$ be a $\mathscr{T}$-sequence of $(T^{k-1}, L^{k-1})$. 
Hence $U^m = T^{k-1}$ and $S^m = L^{k-1}$. 
If $ (T^k, L^k)$ is obtained from $ (T^{k-1}, L^{k-1})$ and $(H_2, I^2)$ by operation $O_7$, then 
$(U^1, S^1), \dots, (U^m, S^m), (T^k, L^k) = (T, L)$ is a $\mathscr{T}$-sequence of $(T, L)$. 
So, let $ (T^k, L^k)$ is obtained from $ (T^{k-1}, L^{k-1})$ and  a labeled  $\overline{K_2}$-corona tree, 
say $(Q, L_q)$  by operation $O_8$. Hence $T^{k-1}$ and $Q$ have exactly one vertex in comman, say $w$,  
and $sta_{T^{k-1}}(w) = sta_Q(w) = sta_{T^k}(w)= A$. 
By Claim \ref{main1}.1, $(Q, L_q) \in \mathscr{T}_{01,02}$ 
and it has a $ \mathscr{T}_{01,02}$-sequence, say  
$(Q^1, L_q^1),\dots, (Q^s, L_q^s)$ such that $Q^s=Q$, $L_q = L_q^s$, and $w \in V(Q^1)$. 
Denote $W^{m+i} = \left\langle V(U^m) \cup V(Q^i ) \right\rangle$ and let a labeling 
$S^{m+i}$ be such that $S^{m+i}|_{U^m} = S^m$ and $S^{m+i}|_{Q^i} = L_q^i$. 
Then the sequence of labeled trees
 $(U^1, S^1), \dots , (U^m, S^m)$, $(W^{m+1}, S^{m+1}), \dots, (W^{m+s},S^{m+s}) = (T, L)$ 
is a $\mathscr{T}_{01,02}$-sequence of $(T,L)$.
\medskip

($A_2$) $\Rightarrow$ ($A_3$): 

Let a labeled tree $(T, S)  \in \mathcal{T}_{01, 02}$. Then $(T, S) $ has 
a $\mathscr{T}$-sequence $\tau : (T^1, S^1), \dots, (T^j, S^j) = (T, S)$, 
where $(T^1, S^1) \in \{(H_2, I^2), (H_3, I^3)\} \subset \mathscr{T}_{01, 02}^\prime$.
We proceed by induction on $p(T) = \Sigma_{z \in \mathtt{C}(T)} deg_T(z)$, 
where $\mathtt{C}(T)$ is the set of all cut-vertices of $T$ that belong to $S_A(T)$. 
 Assume first $p(T) = 0$.  If $j=1$ then we are done. If $j \geq 2$ then 
 $(T^1, S^1) = (H_3, I^3)$ and $(T^{i+1}, S^{i+1})$ is obtained from $(F_1, J^1)$ and $(T^i , S^i)$ by operation $O_5$. 
Thus, $(T, S)$ is a labeled $\overline{K_2}$-corona tree, which allow us to conclude that 
$(T, S)$ is in $\mathscr{T}_{01,02}^\prime$.

Suppose now that $p(T) = k \geq 1$ and for each labeled tree $(H, S_H) \in \mathscr{T}_{01,02}$ with $p(H)  < k$
is fulfilled $(H, S_H) \in \mathscr{T}_{01,02}^\prime$. 
 Then there is a cut-vertex, say $z$, such that
 (a) $z \in S_A(T)$,  
(b) $(T, S)$ is a coalescence of $2$ graphs, say $(T^\prime, S|_{T^\prime})$
      and $(T^{\prime\prime}, S|_{T^{\prime\prime}})$, via $z$, 
and (c) no vertex in $S_A(T) \cap V(T^{\prime\prime})$ is a cut-vertex of $T^{\prime\prime}$. 
 Hence $(T^\prime, S|_{T^\prime}) \in \mathscr{T}_{01,02}^\prime$ (by induction hypothesis) 
and $(T^{\prime\prime}, S|_{T^{\prime\prime}})$ 
is either a labeled $\overline{K_2}$-corona tree or $H_2$. 
 Thus $(T,S)$ is in $\mathscr{T}_{01,02}^\prime $.
\end{proof}

\subsection{$\mathbf{\mathcal{R}_{02, 012}}$-trees.}

Our aim in this section is to present a characterization of $\mathcal{R}_{02,012}$-trees. 
For this purpose, we need the following definitions.
Let $\mathscr{T}_{02,012} \subset \mathscr{T}$ be such that $(T, S) \in \mathscr{T}_{02,012}$
if and only if $(T, S)$  can be obtained from a sequence of labeled trees 
$\tau: (T^1, S^1), \dots, (T^j, S^j)$, ($j \geq 1$),  such that  $(T^1, S^1) = (F_3, J^3)\}$ (see  Figure \ref{fig:F1234})
 and $(T, S) = (T^j, S^j)$, and, if $j \geq 2$,  $(T^{i+1}, S^{i+1})$ can be obtained recursively 
from $(T^i, S^i)$  by one of the  operations $O_9$ and $O_{10}$ listed below. 
\medskip
 \\
{\bf Operation}  {$O_9$}.  The labeled tree $(T^{i+1},S^{i+1})$ is obtained from 
$(T^i,S^i)$ and $(F_3, J^3)$ by adding  the edge $ux$, 
where $u \in V (T^i)$, $x \in V(F_3)$ and  $sta_{T^i}(u) = sta_{F_3}(x) = C$.
\medskip
 \\
{\bf Operation}  {$O_{10}$}.  The labeled tree $(T^{i+1},S^{i+1})$ is obtained from 
 $(T^i,S^i)$ and $(F_4, J^4)$ (see  Figure \ref{fig:F1234})  by adding  the edge $ux$, 
where $u \in V (T^i)$, $x \in V(F_4)$,  $sta_{T^i}(u) = D$, and $sta_{F_4}(x) = C$.
\medskip

Note that once a vertex is assigned a status, 
this status remains unchanged as the labeled tree $(T, S)$ is recursively constructed. 
By the above definitions we see that if $(T,S) \in \mathcal{R}_{01,02}$, 
then $S_A(T) = S_B(T) = \emptyset$. Therefore it is naturally to consider a labeling 
$S$ as $S:V (T ) \rightarrow\{C, D\}$.   

 From Theorem \ref{main} we immediately obtain the following result. 
\begin{corollary}\label{0102cc}
A tree  $T$ is in $\mathcal{R}_{02,012}$ if and only if there is a labeling
$S:V (T ) \rightarrow\{C, D\}$ such that  $(T, S)$ is in $\mathscr{T}_{02,012} $.
Moreover, if $(T, S) \in \mathscr{T}_{02,012}$ then $S_C(T) = V^{02}(T)$ and   	$S_D(T) = V^{012}(T)$.	
\end{corollary}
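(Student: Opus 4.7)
The plan is to extract both implications directly from Theorem \ref{main} specialized to the restricted operations of $\mathscr{T}_{02,012}$. As a preliminary observation, I would verify $\mathscr{T}_{02,012} \subseteq \mathscr{T}$: the base $(F_3, J^3)$ coincides (as a labeled tree) with $(H_{11}, I^{11})$, which belongs to $\mathscr{T}$ via one application of $O_2$ to $(H_1, I^1)$; moreover, operations $O_9$ and $O_{10}$ are literally the restrictions of $O_1$ (using only $F_3$) and $O_2$ (using only $F_4$). So any tree produced by a $\mathscr{T}_{02,012}$-sequence admits a $\mathscr{T}$-sequence as well.

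For the forward direction, given $(T, S) \in \mathscr{T}_{02,012}$, Theorem \ref{main} yields that $T$ is $\gamma_R$-excellent and property $(\mathcal{P}_1)$ holds. Because $S$ takes values only in $\{C, D\}$, $(\mathcal{P}_1)$ forces $S_A(T) = V^{01}(T) = \emptyset$ and $S_B(T) = \emptyset$, which in turn gives $S_C(T) = V^{02}(T)$ and $S_D(T) = V^{012}(T)$. By Lemma \ref{12} we have $V(T) = V^{02}(T) \cup V^{012}(T)$; nonemptiness of each part is immediate from the construction since $F_3$ and $F_4$ each contain both a $C$- and a $D$-labeled vertex, so $T \in \mathcal{R}_{02,012}$ and the ``moreover'' clause is proved.

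For the converse, suppose $T \in \mathcal{R}_{02,012}$. Then $V^0(T) = \emptyset$, so $T$ is $\gamma_R$-excellent, and Theorem \ref{main} provides a labeling $S : V(T) \to \{A, B, C, D\}$ with $(T, S) \in \mathscr{T}$ satisfying $(\mathcal{P}_1)$. From $V^{01}(T) = \emptyset$ we get $S_A(T) = \emptyset$; to see $S_B(T) = \emptyset$, note that for any $x \in V^{02}(T)$ with $\deg(x) = 2$, Lemma \ref{adj}(ii) forces either $N(x) \subseteq V^{012}(T)$ or the existence of a path whose endpoints lie in $V^{01}(T)$, and the latter is ruled out by $V^{01}(T) = \emptyset$. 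Hence $|N(x) \cap V^{02}(T)| = 0$, so $S_B(T) = \emptyset$ and $S$ maps into $\{C, D\}$.

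It remains to upgrade a $\mathscr{T}$-sequence of $(T, S)$ to a $\mathscr{T}_{02,012}$-sequence; this is the main obstacle. Since labels are preserved throughout any $\mathscr{T}$-sequence, the absence of $A$- and $B$-labeled vertices in $T$ forces the same on every intermediate tree. This rules out operations $O_3$ and $O_4$ (whose attached $H_k$ always contributes an $A$-labeled vertex) as well as operation $O_1$ applied with $F_1$ or $F_2$ (which contribute $A$- or $B$-labeled vertices), leaving only $O_1$ with $F_3$ (i.e.\ $O_9$) and $O_2$ with $F_4$ (i.e.\ $O_{10}$). Likewise, the base of the sequence must have only $\{C, D\}$ labels, which excludes $H_2, \ldots, H_5$, leaving $(H_1, I^1)$; since $H_1$ carries no $C$ vertex, the first operation applied to it is forced to be $O_2$ with $F_4$, producing $(F_3, J^3)$. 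Dropping the initial $(H_1, I^1)$ leaves a valid $\mathscr{T}_{02,012}$-sequence for $(T, S)$, finishing the proof.
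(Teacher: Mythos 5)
Your proof is correct and follows the same route as the paper, which simply states that the corollary is "immediate" from Theorem \ref{main} and gives no further argument. You have merely filled in the details the paper omits — in particular the verification that $(F_3,J^3)$ arises from $(H_1,I^1)$ via $O_2$ and that a $\mathscr{T}$-sequence producing a $\{C,D\}$-labeled tree can only use $O_1$ with $F_3$ and $O_2$, which is exactly the content needed to make the paper's "immediately" rigorous.
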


As un immediate consequence of Corollary \ref{unilab} we obtain: 
\begin{corollary}\label{unilab2}
If $(T,S_1),(T,S_2)\in \mathscr{T}_{02,012}$ then $S_1 \equiv S_2$.
\end{corollary}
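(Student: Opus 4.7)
The plan is simply to invoke the inclusion $\mathscr{T}_{02,012}\subseteq \mathscr{T}$ that is built into the very definition of the subfamily, and then apply Corollary \ref{unilab}. Concretely, if $(T,S_1),(T,S_2)\in \mathscr{T}_{02,012}$, then by the opening line of the subsection each of these labeled trees is an element of $\mathscr{T}$, so Corollary \ref{unilab} yields $S_1\equiv S_2$ at once.

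The only point that deserves any checking is the asserted inclusion $\mathscr{T}_{02,012}\subseteq \mathscr{T}$. I would verify it by induction on the length $j$ of a $\mathscr{T}_{02,012}$-sequence $\tau:(T^1,S^1),\dots,(T^j,S^j)$. For the base step one has $(T^1,S^1)=(F_3,J^3)$, and a short explicit $\mathscr{T}$-sequence (starting from a suitable $(H_i,I^i)$ and applying $O_1$ or $O_2$) produces $(F_3,J^3)$ as an element of $\mathscr{T}$. For the inductive step it suffices to observe that operation $O_9$ is literally the special case of $O_1$ in which the attached gadget $(F,J)$ is forced to be $(F_3,J^3)$, while operation $O_{10}$ coincides verbatim with operation $O_2$; hence any $\mathscr{T}_{02,012}$-sequence can be reinterpreted as (or extended to) a valid $\mathscr{T}$-sequence for the same labeled tree.

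Since this is the only technicality and it is essentially bookkeeping, there is no real obstacle: the whole content of Corollary \ref{unilab2} is that uniqueness in the ambient family $\mathscr{T}$ passes to any subfamily, and the proposed proof is two lines once the inclusion is in hand.
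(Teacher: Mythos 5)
Your proposal is correct and is exactly the paper's argument: the paper declares $\mathscr{T}_{02,012}\subset\mathscr{T}$ in its definition (with $O_9$ a special case of $O_1$, $O_{10}$ identical to $O_2$, and $(F_3,J^3)$ obtainable from $(H_1,I^1)$ via $O_2$), and then derives the corollary as an immediate consequence of Corollary \ref{unilab}. Your extra paragraph verifying the inclusion is sound bookkeeping that the paper leaves implicit.
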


\begin{them}\label{45} \cite{ckpw} 
If $G$ is a connected graph of order $n \geq 3$, then $\gamma_R(G) \leq  4n/5$.
 The equality holds if and only if $G$ is $C_5$ or is obtained from $\frac{n}{5}P_5$
 by adding a connected subgraph on the set of centers of the components of $\frac{n}{5}P_5$.
\end{them}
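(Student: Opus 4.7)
The plan has three stages: reduce the problem to trees, prove the bound for trees by induction, and then lift the equality characterization back to arbitrary connected graphs.

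For the reduction, Lemma~\ref{addedge} gives $\gamma_R(G+e)\le\gamma_R(G)$ for every non-edge $e$, so iterating backwards from $G$ to any fixed spanning tree $T$ yields $\gamma_R(G)\le\gamma_R(T)$. It therefore suffices to prove $\gamma_R(T)\le 4n/5$ for every tree $T$ of order $n\ge 3$.

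For the tree bound I would induct on $n$, with the base cases $n\in\{3,4\}$ checked directly (the only trees are $P_3,P_4,K_{1,3}$, with Roman domination numbers $2,3,2$). For $n\ge 5$ fix a longest path $v_0,v_1,\dots,v_d$ in $T$. Because the path is longest, every vertex at distance at most two from $v_0$ that is off the path is either a leaf or a stem whose non-$v_1$ neighbors are leaves. Splitting into cases on the degrees of $v_1$ and $v_2$ and the subtrees hanging off them, I would in each case exhibit a set $X\subseteq V(T)$ of five vertices containing $v_0$ such that $T-X$ is either empty or a connected subtree of order $\ge 3$, together with a partial RDF on $X$ of weight at most $4$ compatible with the inductive RDF on $T-X$. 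The two weights add to at most $4(n-5)/5+4=4n/5$. A few boundary cases where $|V(T-X)|\in\{1,2\}$ require a short separate construction.

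For the equality characterization, tightness at each reduction step forces the peeled set $X$ to induce a $P_5$ whose ends are leaves of $T$, and forces the unique vertex of $X$ attached to $T-X$ to be the $P_5$-center attached to another center of $T-X$. Iterating, the extremal trees are exactly $P_5$ and trees built from $\frac{n}{5}P_5$ by joining the $n/5$ centers along a spanning tree. To lift this to a connected $G$ with $\gamma_R(G)=4n/5$, note that every spanning tree $T$ of $G$ satisfies $\gamma_R(T)\ge\gamma_R(G)=4n/5$ and so is extremal, giving the required $\frac{n}{5}P_5$-block decomposition with the centers spanned by a tree. Any additional edge of $G$ must, by Lemma~\ref{addedge}, have both endpoints in the center set: an extra edge with a non-center endpoint $y$ could be combined with the canonical per-block pattern $(1,0,2,0,1)$ to produce an RDF with $\{f(x),f(y)\}=\{1,2\}$, which would force $\gamma_R(G)<4n/5$, a contradiction. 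The exceptional graph $C_5$ appears at $n=5$, where the single $P_5$ may be closed into a cycle without destroying extremality.

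The main obstacle is the case analysis in the inductive step for trees: each local configuration at the deep end of a longest path must yield a suitable peelable five-vertex set $X$ with a matching weight-$4$ partial RDF, and delicate cases (for example when $v_2$ carries several pendant copies of $P_2$, or when pendant subtrees at $v_1,v_2$ are not themselves $P_5$-shaped) require careful amortization to stay within the $4n/5$ budget. A secondary difficulty is the equality lift from trees to general $G$, which demands a precise description of exactly which extra edges can be added to an extremal tree without decreasing $\gamma_R$.
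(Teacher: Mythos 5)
First, a point of comparison: the paper does not prove Theorem~\ref{45} at all. It is quoted from \cite{ckpw} (hence the lettered theorem environment), so there is no in-paper argument to measure your proposal against; it has to stand on its own.

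Your global frame is sound. The reduction $\gamma_R(G)\le\gamma_R(T)$ for a spanning tree $T$ is immediate (any RDF of a spanning subgraph is an RDF of $G$, or iterate Lemma~\ref{addedge}), and the equality lift is workable: every spanning tree of an extremal $G$ is itself extremal, and for each candidate extra edge one exhibits a weight-$4n/5$ pattern assigning $\{1,2\}$ to its endpoints unless both endpoints are centers (with the leaf--leaf edge of a single $P_5$ as the exception producing $C_5$). This requires checking all pairs of non-center positions across two blocks, but it is a finite verification. The genuine gap is the inductive step for trees, which is the entire substance of the theorem and is left as an unexecuted case analysis; moreover, the specific scheme you commit to --- always peel a set $X$ of exactly five vertices containing $v_0$ with $T-X$ connected of order at least $3$ (or empty) and a partial RDF on $X$ of weight at most $4$ --- provably breaks on concrete configurations. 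Take $v_2$ to be a branch vertex carrying $k\ge 3$ pendant paths of length $2$ (stems $s_1,\dots,s_k$ with leaves $\ell_1,\dots,\ell_k$) plus an edge to the rest of the tree. Any five-vertex $X$ containing $\ell_1$ that keeps $T-X$ connected must omit $v_2$, so $X$ is something like $\{\ell_1,s_1,\ell_2,s_2,\ell_3\}$ with $s_3\notin X$. Then either $\ell_3$ gets a positive label and $s_1,s_2$ must still be dominated --- which within $X$ costs $2+2+1=5>4$ --- or you must force the RDF of $T-X$ to place a $2$ on $v_2$ (to cover $s_1,s_2$ for free) and on $s_3$ (to cover $\ell_3$), and the bare inductive hypothesis ``$\gamma_R(T-X)\le 4(n-5)/5$'' gives you no such control. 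Fixing this requires either a strengthened induction hypothesis (e.g., the existence of $\gamma_R$-functions assigning $2$ to prescribed support vertices, as in the lemmas the present paper proves for other purposes) or a variable-size peeling with a genuine amortization argument. You flag this difficulty yourself, but flagging it does not discharge it: as written, the proposal is a plausible outline of the standard approach rather than a proof.
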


As a consequence of Theorem \ref{45} and Corollary \ref{0102cc} we have: 
\begin{corollary}\label{45cor}
Let $G$ be a connected $n$-vertex graph with $n \geq 6$ and $\gamma_R(G) =  4n/5$. 
Then $G$ is in $\mathcal{R}_{02,012}$ and $V^{012}(G)$ consists of all leaves and all stems.
 Moreover, if $G$ is a tree, then 
$G$ has a $\mathscr{T}$-sequence $\tau: (G^1, S^1), \dots, (G^j, S^j)$, ($j \geq 1$), 
 such that  $(G^1, S^1) = (F_3, J^3)$ (see  Figure \ref{fig:F1234})
 and  if $j \geq 2$, then $(G^{i+1}, S^{i+1})$ can be obtained recursively 
from $(G^i, S^i)$  by operation $O_9$. 
\end{corollary}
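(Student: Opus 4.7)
The plan combines Theorem~\ref{45} with Corollary~\ref{boundextrees}(i). Because $n\ge 6$, $G\neq C_5$, so Theorem~\ref{45} exhibits $G$ as $\frac{n}{5}P_5$ together with a connected subgraph added on the set of centres; in particular $5\mid n$ and $G$ splits canonically into $n/5$ pieces $P^1,\dots,P^{n/5}$, where each $P^k$ is a $P_5$ with vertices $v^k_1,v^k_2,v^k_3,v^k_4,v^k_5$ in path order and only the centre $v^k_3$ acquires neighbours outside $P^k$. Consequently the leaves of $G$ are exactly the $v^k_1,v^k_5$ and the stems of $G$ are exactly the $v^k_2,v^k_4$.

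The technical core is the local-rigidity claim that for every $\gamma_R$-function $f$ on $G$ and every $k$,
\[
 c_k \;:=\; \sum_{v\in V(P^k)} f(v) \;=\; 4.
\]
The four outer vertices $v^k_1,v^k_2,v^k_4,v^k_5$ have no external neighbours, so their domination must happen inside $P^k$; a short case analysis on the pair $(f(v^k_2),f(v^k_4))$ gives $c_k\ge 4$ in every case, and summing together with $\gamma_R(G)=4n/5$ forces equality. With $c_k=4$ fixed, an additional brief sub-case (in which Lemma~\ref{on} rules out a $V_1^f$--$V_2^f$ edge and the weight budget already consumed on the outer vertices is too tight) excludes $f(v^k_3)=1$. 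Conversely, placing the pattern $(0,2,0,2,0)$ on every piece realises $f(v^k_3)=0$, and placing $(1,0,2,0,1)$ on $P^k$ together with $(0,2,0,2,0)$ on every other piece realises $f(v^k_3)=2$; hence $v^k_3\in V^{02}(G)$. For the leaves and stems of $P^k$, the four patterns $(0,2,0,2,0)$, $(1,0,2,0,1)$, $(2,0,0,2,0)$ and $(1,1,0,2,0)$ on $P^k$ (with $(0,2,0,2,0)$ on every other piece) realise all three values $0,1,2$ at each such vertex, so leaves and stems lie in $V^{012}(G)$.

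Assembling: $V^0(G)=\emptyset$, so $G$ is $\gamma_R$-excellent; $V^{012}(G)$ is exactly the set of leaves and stems while $V^{02}(G)$ is exactly the set of centres, so $G\in\mathcal{R}_{02,012}$ and $|V^{02}(G)|=n/5$. When $G$ is a tree, Corollary~\ref{boundextrees}(i) now applies verbatim and supplies a $\mathscr{T}$-sequence beginning with $(F_3,J^3)$ in which every later step is operation $O_1$ with $(F_3,J^3)$; but the restriction of $O_1$ to the single argument $(F_3,J^3)$ is by definition operation $O_9$, yielding the claimed sequence. The main obstacle is the local case analysis that fixes $c_k=4$ for all $k$ and excludes $f(v^k_3)=1$; everything else amounts to exhibiting witnessing $\gamma_R$-functions and invoking already-proved results.
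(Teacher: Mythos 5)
Your proof is correct, and it follows essentially the route the paper intends: the paper states this corollary as an immediate consequence of Theorem~\ref{45} (and its earlier corollaries) and omits the verification entirely, whereas you supply it — the decomposition into $P_5$-pieces, the rigidity argument forcing each piece to carry weight exactly $4$ under every $\gamma_R$-function, the exclusion of the value $1$ at a centre, and the explicit witnessing patterns all check out. The only (harmless) divergence is that you obtain the ``moreover'' part from Corollary~\ref{boundextrees}(i) via $|V^{02}(T)|=n/5$, while the paper cites Corollary~\ref{0102cc}; your choice is actually slightly more economical, since Corollary~\ref{0102cc} alone would still leave you to rule out occurrences of operation $O_{10}$ in the $\mathscr{T}_{02,012}$-sequence, which the $n/5$ count does automatically.
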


A graph $G$ is said to be in class $UVR$ if $\gamma(G-v) = \gamma (G)$ for each $v \in V(G)$. 
Constructive characterizations of trees belonging to $UVR$ are given
 in \cite{sam1} by the present author, and independently  in \cite{hh1} by Haynes and Henning. 
We need the following result in \cite{sam1} (reformulated in  our present terminology).

\begin{them}\label{uvr} \cite{sam1}
A tree $T$ of order at least $5$ is in  $UVR$ if and only if there is a labeling 
$S:V (T ) \rightarrow\{C, D\}$ such that  $(T, S)$ is in $\mathscr{T}_{02,012} $.
Moreover, if $(T, S) \in \mathscr{T}_{02,012}$ then 
$S_C(T)$ and $S_D(T)$ are the sets of all $\gamma$-bad and all $\gamma$-good 
vertices of $T$, respectively.
\end{them}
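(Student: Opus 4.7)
The plan is to obtain Theorem \ref{uvr} as a consequence of Corollary \ref{0102cc} by establishing, for a tree $T$ of order at least $5$, the equivalence
\[
T \in UVR \quad\Longleftrightarrow\quad T \in \mathcal{R}_{02,012},
\]
together with the label identifications $S_C(T) = \{\gamma\text{-bad vertices of } T\}$ and $S_D(T) = \{\gamma\text{-good vertices of } T\}$. The bridge between ordinary and Roman domination is that any $\gamma$-set $D$ of $T$ gives rise to the RDF $(V(T)\setminus D;\emptyset;D)$ of weight $2\gamma(T)$, while the $V_2$-part of a $\gamma_R$-function with $V_1=\emptyset$ recovers a dominating set.

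For the implication $(\Rightarrow)$, assume $T \in UVR$. For each $v \in V(T)$, the equality $\gamma(T-v)=\gamma(T)$ yields a $\gamma$-set of $T-v$ whose associated RDF on $T$ assigns $v$ the value $0$, while if $v$ is $\gamma$-good one also gets an RDF assigning $v$ the value $2$. This shows $T$ is $\gamma_R$-excellent, so Theorem \ref{main} supplies a $\mathscr{T}$-labeling $S$ of $T$. I would then argue $S_A(T)=V^{01}(T)=\emptyset$: a vertex $v \in V^{01}(T)$ would, via Lemma \ref{adj}(iii) and an analysis of its neighborhood using Lemmas \ref{12} and \ref{no3}, force a local configuration in $T$ incompatible with $\gamma(T-v)=\gamma(T)$. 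Corollary \ref{0102cc} then places $(T,S)$ in $\mathscr{T}_{02,012}$, and the label identifications follow by checking that $v \in V^{012}(T)$ precisely when $v$ lies in some minimum dominating set of $T$.

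For the implication $(\Leftarrow)$, I would induct on the length of a $\mathscr{T}_{02,012}$-sequence. The base case $T = F_3$ is handled by direct inspection of dominating sets. For the inductive step, operations $O_9$ and $O_{10}$ glue a small labeled graph ($F_3$ or $F_4$) at a designated vertex; I would show these preserve the $UVR$ property by analyzing how $\gamma(T)$ and $\gamma(T-v)$ are affected, mirroring the coalescence analysis of Section 3 (Proposition \ref{coales12} and Lemma \ref{01012}) in the ordinary-domination setting. The labeling correspondence with $\gamma$-good/$\gamma$-bad vertices is tracked inductively through each operation.

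The main obstacle will be the identification $V^{01}(T)=\emptyset$ in the forward direction: translating the global $UVR$ hypothesis into the local Roman-structural statement that no vertex is forced to take values only in $\{0,1\}$ across all $\gamma_R$-functions. This step is where the structural results of Section 3, especially Lemmas \ref{minus}, \ref{no3}, and \ref{adj}, must be combined most delicately.
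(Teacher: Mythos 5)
First, a structural point: the paper does not prove this theorem at all --- it is imported verbatim from \cite{sam1} (with \cite{hh1} as an independent source) and then \emph{combined} with Corollary \ref{0102cc} to obtain Theorem \ref{main02012}. Your plan runs in the opposite direction: you propose to prove the equivalence $T\in UVR \Leftrightarrow T\in\mathcal{R}_{02,012}$ from scratch and then read the labeling characterization off Corollary \ref{0102cc}. That is not circular (Corollary \ref{0102cc} rests only on Theorem \ref{main}), but it means the entire content of the theorem is concentrated in the equivalence, and that is precisely where your sketch has genuine gaps.

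The most concrete failure is the bridge between $\gamma$ and $\gamma_R$. The RDF $(V(T)\setminus D;\emptyset;D)$ built from a $\gamma$-set $D$ has weight $2\gamma(T)$, which is only an \emph{upper bound} for $\gamma_R(T)$; unless you first prove that UVR trees satisfy $\gamma_R(T)=2\gamma(T)$, none of these functions is a $\gamma_R$-function and they say nothing about the sets $V^{\mathtt{X}}(T)$. Conversely, a $\gamma_R$-function need not have $V_1=\emptyset$, so its $V_2$-part need not recover a $\gamma$-set. Second, even granting that bridge, your argument for $\gamma_R$-excellence only produces the value $2$ at $\gamma$-\emph{good} vertices; but by the very statement being proved a UVR tree generally has $\gamma$-bad vertices (the $C$-labelled ones, e.g.\ the centre of $P_5=F_3$, which lies in no $\gamma$-set), and for those you give no reason why some $\gamma_R$-function is nonzero there. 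The companion claim ``$v\in V^{012}(T)$ precisely when $v$ lies in some minimum dominating set'' is likewise asserted, not proved, and is false without the Roman identity above. Finally, the two pivotal steps --- showing $V^{01}(T)=\emptyset$ in the forward direction, and showing that $O_9$ and $O_{10}$ preserve membership in $UVR$ together with the good/bad bookkeeping in the backward direction --- are both deferred with ``I would argue/show''; note moreover that $O_9$ and $O_{10}$ are edge additions, not coalescences, so Proposition \ref{coales12} and Lemma \ref{01012} do not transfer and the ordinary-domination analogues would have to be developed from scratch. As it stands the proposal is a plausible programme, not a proof.
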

We end with our main result in this subsection. 

\begin{theorem}\label{main02012}
For any tree $T$ the following are equivalent:
\medskip

 {\rm($A_4$)} \ $T$ is in $\mathcal{R}_{02,012}$,   \  \  \  \  \ {\rm($A_5$)} $T$ is in $\mathscr{T}_{02, 012}$, 
 \  \   \   \   {\rm($A_6$)} $T$ is in $UVR$.
\end{theorem}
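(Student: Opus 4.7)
The plan is to derive the three-way equivalence by composing two results already in the paper: Corollary~\ref{0102cc} supplies $(A_4)\Leftrightarrow(A_5)$, and Theorem~\ref{uvr} supplies $(A_5)\Leftrightarrow(A_6)$, modulo a check on trees of small order. The proof is essentially a diagram-chase together with some small-order bookkeeping.

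For the first equivalence, I would simply quote Corollary~\ref{0102cc}: a tree $T$ belongs to $\mathcal{R}_{02,012}$ if and only if there exists a labeling $S:V(T)\to\{C,D\}$ with $(T,S)\in\mathscr{T}_{02,012}$. This is precisely the assertion $(A_4)\Leftrightarrow(A_5)$.

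For the second equivalence I would invoke Theorem~\ref{uvr}, which states that for any tree $T$ of order at least~$5$, membership in $UVR$ is equivalent to the existence of a labeling $S:V(T)\to\{C,D\}$ with $(T,S)\in\mathscr{T}_{02,012}$; the same theorem even identifies $S_C(T)$ and $S_D(T)$ with the $\gamma$-bad and $\gamma$-good vertex sets, respectively. Composing with Corollary~\ref{0102cc} yields the full three-way equivalence $(A_4)\Leftrightarrow(A_5)\Leftrightarrow(A_6)$ for every tree of order at least~$5$.

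The main (and really only) obstacle is coping with the order hypothesis in Theorem~\ref{uvr}. Every $\mathscr{T}_{02,012}$-sequence begins at $(F_3,J^3)$, which is a tree on five vertices, so any tree satisfying $(A_5)$ — and hence $(A_4)$ — already has order at least~$5$. Consequently, for trees of order at most~$4$ both $(A_4)$ and $(A_5)$ fail structurally, and a short case check over $K_1$, $P_3$, $P_4$ and $K_{1,3}$ shows that $(A_6)$ fails for each of them as well (in each case deleting a suitable central vertex strictly decreases $\gamma$). The one genuinely delicate tree is $K_2$, which satisfies $(A_6)$ trivially but neither $(A_4)$ nor $(A_5)$; I would handle it by an explicit remark or by reading the statement as restricted to trees of order at least~$5$, mirroring the convention in Theorem~\ref{uvr}. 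Once this small-order bookkeeping is done, the theorem is an immediate corollary of the two named results.
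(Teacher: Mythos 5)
Your proof is correct and takes essentially the same route as the paper, whose entire proof is the one line "Corollary~\ref{0102cc} and Theorem~\ref{uvr} together imply the required result." Your small-order bookkeeping is in fact a genuine improvement on the paper: $K_2$ really is in $UVR$ (since $\gamma(K_2-v)=\gamma(K_2)=1$) while $V^{02}(K_2)=\emptyset$ places it in $\mathcal{R}_{012}$ rather than $\mathcal{R}_{02,012}$, so the theorem as literally stated needs either your explicit exclusion of $K_2$ or the order-at-least-$5$ convention you propose; the only slip is your parenthetical that $\gamma$ strictly \emph{decreases} under vertex deletion in the remaining small cases (it increases when the center of $P_3$ or $K_{1,3}$ is removed), though the conclusion that those trees fail $(A_6)$ is still right.
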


\begin{proof}[\bf Proof.]
Corollary \ref{0102cc} and Theorem \ref{uvr} together imply the required result.
\end{proof}

\section{Open problems and questions}
We conclude the paper by listing some interesting problems and directions for further research.
Let first note that if $n \geq 3$ and $\mathtt{G}_{n,k}$ is not empty, then $k \leq 4n/5$ (Theorem \ref{45}).  

 An element of $\mathbb{RE}_{n,k}$ is said to be {\em isolated}, 
whenever it is  both  maximal and  minimal. 
 In other words, a graph $H \in \mathtt{G}_{n,k}$ is isolated in $\mathbb{RE}_{n,k}$ 
if and only if $H \in \mathcal{R}_{CEA}$ and 
for each $e \in E(H)$ at least  one of the following holds: 
(a) $H-e$ is not connected,  (b) $\gamma_R(H) \not = \gamma_R(H-e)$,  
(c) $H-e$ is not $\gamma_R$-excellent.

\begin{example}\label{CEA-trees} 
\begin{itemize} 
\item[(i)] All  $\gamma_R$-excellent graphs with  the Roman domination number equals to $2$
                  are  $\overline{K_2}$ and $K_n$, $n \geq 2$. If a graph $G \in \mathcal{R}_{CEA}$ 
									and $\gamma_R(G) = 2$, then $G$ is complete. $K_n$ is isolated in $\mathbb{RE}_{n,2}$, $n \geq 2$. 
\item[(ii)] \cite{rhv} $K_2$, $H_7$ and $H_{8}$ (see Fig. \ref{fig:Fig 1}) are the only trees in $\mathcal{R}_{CEA}$.
\item[(iii)] If $\mathbb{RE}_{n,k}$ has a tree $T$ as an isolated element, then 
                     either $(n,k) = (2,2)$ and $T =K_2$,   or $(n,k) = (9,7)$ and $T = H_{7}$, or $(n,k) = (10,8)$ and $T = H_{8}$. 							
\end{itemize}
\end{example}

\begin{itemize}
\item[$\bullet$] Find results on the isolated elements of $\mathbb{RE}_{n,k}$. 
\end{itemize}


\begin{itemize}
\item[$\bullet$] What is the maximum number of edges $m(\mathtt{G}_{n,k})$ of a graph in $\mathtt{G}_{n,k}$?
                               Note that (a) $m(\mathtt{G}_{n,2}) = n(n-1)/2$,
															(b) $m(\mathtt{G}_{n,3}) = n(n-1)/2 - \left\lceil n/2\right\rceil$. 
\end{itemize} 

\begin{itemize}
\item[$\bullet$] Find results on those minimal elements of $\mathbb{RE}_{n,k}$ that are not trees. 
\end{itemize}
\begin{example} \label{mincycle}
(a)  A cycle $C_n$ is a minimal element of   $\mathbb{RE}_{n,k}$ if and only if $n \equiv 0 \pmod 3$
 and $k = 2n/3$. (b) A graph $G$ obtained from the complete bipartite graph $K_{p,q}$, $p \geq q \geq 3$,  
by deleting an edge is a minimal element of   $\mathbb{RE}_{p+q,4}$.
\end{example}
  The height of a poset  is the maximal number of elements of a chain.
\begin{itemize}
\item[$\bullet$] 
                               Find the height of  $\mathbb{RE}_{n,k}$. 
\end{itemize}
\begin{example} \label{height}
\begin{itemize}
\item[(a)]  It is easy to check that any  longest chain in $\mathbb{RE}_{6,4}$ 
has as the first element $H_3$ (see  Fig \ref{fig:Fig 1})
 and as the last element one of the two $3$-regular $6$-vertex graphs. 
Therefore the height of  $\mathbb{RE}_{6,4}$ is $5$. 
\item[(b)] Let us consider the poset $\mathbb{RE}_{5r,4r}$, $r \geq 2$. 
                     All its minimal elements  are $\gamma_R$-excellent trees
										(by Theorem \ref{45} and Corollary \ref{45cor}), which are characterized in Corollary \ref{45cor}. 
										Moreover, the graph obtained from $rP_5$ by adding a complete graph 
										on the set of centers of the components of $rP_5$ is the largest element of $\mathbb{RE}_{5r,4r}$. 
										Therefore the height of $\mathbb{RE}_{5r,4r}$ is $(r-1)(r-2)/2 + 1$. 
\end{itemize}
\end{example}

\begin{itemize}
\item[$\bullet$] Find results on $\gamma_{\mathtt{Y}R}$-excellent graphs at least when $\mathtt{Y}$ is one of 
$\{-1,0,1\}$, $\{-1,1\}$ and $\{-1,1,2\}$. 
\end{itemize}

\end{document}